\newtheorem{theorem}{Theorem}[section]
\newtheorem{lemma}[theorem]{Lemma}
\newtheorem{remark}[theorem]{Remark}
\newtheorem{proposition}[theorem]{Proposition}
\newtheorem{definition}[theorem]{Definition}
\numberwithin{equation}{section}
\newcommand{\bS}{{\mathbb{S}}}
\newcommand{\cT}{{\mathcal{T}}}
\begin{document}
\title{Equivariant min-max theory}
\author{Daniel Ketover}\address{Department of Mathematics\\Princeton
University\\Princeton, NJ 08544}
\thanks{The author was partially supported by NSF Grant DMS-11040934 and an NSF Postdoctoral Research fellowship as well as ERC-2011-StG-278940.}
 \email{dketover@math.princeton.edu}
\maketitle
\begin{abstract}
We develop an equivariant min-max theory as proposed by Pitts-Rubinstein in 1988 and then show that it can produce many of the known minimal surfaces in $\mathbb{S}^3$ up to genus and symmetry group.  We also produce several new infinite families of minimal surfaces in $\mathbb{S}^3$ proposed by Pitts-Rubinstein. These examples are doublings and desingularizations of stationary integral varifolds in $\bS^3$. 
\end{abstract}

\section{Introduction}
Constructing embedded minimal surfaces in a given $3$-manifold is a difficult problem.  If there are incompressible  surfaces in the manifold one can minimize area in the isotopy class of such a surface to produce a minimal surface by a result of Meeks-Simon-Yau \cite{msy}. The only technique that works in full generality with no assumptions on the metric or manifold is the min-max technique of Almgren-Pitts and later refined by Simon-Smith \cite{smith}.  In this approach one considers sweep-outs of the manifold and the smallest slice needed to ``pull over" the manifold is a smooth embedded minimal surface.   Recently, Marques-Neves have used this technique and higher parameter families to construct infinitely many minimal surfaces in manifolds with positive Ricci curvature \cite{mn}.  In full generality however it is very hard to control the genus of the limiting minimal surfaces beyond the genus bounds proved in \cite{ketover} (which built on earlier work of Simon-Smith \cite{smith} and De Lellis-Pellandini \cite{dep}). See Colding-De Lellis \cite{cd} for an exposition of the min-max theory.   

If the manifold has some symmetries, one can hope to control the genus of the minimal surfaces produced from a min-max procedure.  Toward that end, in the late 80s Pitts-Rubinstein \cite{pr} proposed considering the situation when one has a finite group of isometries $G$ acting on a $3$-manifold.  Then one can consider sweepouts where each sweepout surface is preserved by the group and run a min-max procedure that only includes such equivariant surfaces.  They claimed one should be able to produce $G$-equivariant minimal surfaces this way.  The main result of this paper (Theorem \ref{main}) is that this procedure conjectured by Pitts-Rubinstein in fact works.  

It was further proved in \cite{ketover} that after performing finitely many neck-pinch surgeries on the the min-max sequence, the remaining components align themselves as covering about the limiting minimal surfaces with the expected multiplicities.  Since we are restricted here to equivariant sweepouts, any neckpinch must {\it also} be equivariant.   This severely limits the type of degeneration that occurs and will allow us to control the genus of the limiting minimal surfaces in most cases.  In fact, we will show that in round $\mathbb{S}^3$, many (if not all) of the known embedded minimal surfaces can be generated from our equivariant min-max process.  

One might object that only considering equivariant deformations one should not be able to produce a surface critical with respect to \emph{all} variations.  But there is a very old principle in mathematics and mathematical physics formulated by Palais \cite{palais} called the ``Principle of Symmetric Criticality." It says roughly speaking that if one has a functional on a space with a symmetry, and there is a symmetric point in the space where the functional has zero derivative in directions that have the symmetry, then the point is actually critical with respect to all deformations.   This is why one can expect to produce globally minimal surfaces when one is only looking at sweepouts and comparison surfaces that are also equivariant.\\
\indent  As a simple illustration of this principle, if one has a smooth $G$-equivariant surface $\Sigma$ and the area of $\Sigma$ has zero variation among equivariant deformations, then we can easily show it must be a smooth minimal surface (see Theorem 1 in \cite{hsianglawson}).
Indeed by the first variation formula, for any vector field $V$ defined on a neighborhood of $\Sigma$ we have
\begin{equation}\label{f}
\delta\Sigma(V)=-\int_\Sigma \langle X,H\rangle.
\end{equation}
Since $\Sigma$ is equivariant, so is the vector $H$ and we can thus plug $H$ into \eqref{f} and obtain by stationarity among equivariant deformations: $$0=\delta\Sigma(H)=-\int_\Sigma |H|^2,$$ whence $H=0$ identically.  

The idea of finding minimal surfaces in a singular quotient manifold $M/G$ and then lifting to produce a minimal surface in $M$ goes at least as far back as Hsiang-Lawson \cite{hsianglawson}.  This is how W.Y. Hsiang constructed examples of non-equatorial minimal spheres embedded in high dimensional spheres \cite{hsiang}.  In previous work the analysis amounts to finding geodesics on the quotient space.  To our knowledge our results are the first where one allows $M/G$ to have dimension greater than $2$.

We first give a few preliminary facts about group actions in order to state our results.  Throughout this paper $G$ will denote a finite group of orientation-preserving isometries acting on a $3$-manifold $M$.
For any $x\in M$ we first define the \emph{isotropy subgroup} $G_x$ at $x$ as:
$$G_x=\{g\in G\;\;|\;\;gx = x\}$$
We then define the \emph{singular locus} of the group action as points with nontrivial isotropy subgroup:
$$\tilde{\mathcal{S}}=\{ x\in M\;\;|\;\;G_x\neq\{e\}\}$$
Henceforth we will restrict our attention to groups $G$ such that $M/G$ is an orientable orbifold without boundary. Let $\pi:M\rightarrow M/G$ be the projection map and set $\mathcal{S}=\pi(\tilde{\mathcal{S}})$.  In this case, $\mathcal{S}$ has the structure of a trivalent graph (see for instance Section 2.3.4 in \cite{KL}).  That is, we can stratify the set $\mathcal{S}$: $$\mathcal{S}=\mathcal{S}_0\cup\mathcal{S}_1,$$ where $\mathcal{S}_1$ consists of geodesic segments that connect (potentially) in the finitely many vertices comprising the set $\mathcal{S}_0$.   Three segments meet at each point in $\mathcal{S}_0$.  Along each segment in $\mathcal{S}_1$ the isotropy subgroup is $\mathbb{Z}_n$ for some $n\in\mathbb{Z}^+$.  Points in the singular set $\mathcal{S}_0$ look locally like a cone over a two-dimensional spherical orbifold.  At such a vertex in $\mathcal{S}_0$ where three edges are meeting (indexed by their isotropy groups of orders $n_1$, $n_2$, and $n_3$), we have as possibilities: $$\mathbb{D}_{2n} =(2,2,n),\;\; T_{12}=(2,3,3),\;\;O_{24}=(2,3,4),\;\; I_{60}=(2,3,5),$$ which correspond to the classification of $2$-d spherical orbifolds. It will be convenient to set $\tilde{\mathcal{S}}_0=\pi^{-1}(\mathcal{S}_0)$ and $\tilde{\mathcal{S}}_1=\pi^{-1}  (\mathcal{S}_1)$.  It is important to remember that $\tilde{\mathcal{S}}$ will not have the same trivalent structure as its projection $\mathcal{S}$.  Let us call a smooth connected segment in $\tilde{\mathcal{S}}_1$ an \emph{arc of constant isotropy} if the isotropy group at all points is the same and the segment is maximal with respect to this property.  If not a closed curve, such an arc of constant isotropy has points in $\tilde{\mathcal{S}}_0$ as its endpoints.\\ \indent
Aside from a group action, the other notion we need to state our main result is that of a $G$-sweepout:
\begin{definition}\label{gsweepout}
If $M$ is a closed $3$-manifold and $G$ a finite group of orientation-preserving isometries.  A {\it $G$-sweepout} of $M$ is a family of closed sets $\{\Sigma_t\}_{t=0}^1$, continuously varying in the Hausdorff topology such that:
\begin{enumerate}[label=\roman*.,topsep=2pt,itemsep=-1ex,partopsep=1ex,parsep=1ex]
\item $\Sigma_t$ is a smooth embedded surface for $0<t<1$ varying smoothly
\item $\Sigma_0$ and $\Sigma_1$ are $1$-d graphs in $M$
\item Each $\Sigma_t$ is $G$-equivariant, i.e. $g(\Sigma_t)=\Sigma_t$ for $0\leq t\leq 1$ and all $g\in G$
\item Only the slices $\Sigma_0$ and $\Sigma_1$ intersect $\tilde{\mathcal{S}}_0$.
\item The surfaces $\Sigma_t$ for $0<t<1$ intersect $\tilde{\mathcal{S}}$ transversally
\end{enumerate}
\end{definition}
\begin{remark}\normalfont
In fact, v. forces the surfaces to intersect $\tilde{\mathcal{S}}_1$ orthogonally.  It follows that for $0<t<1$, each surface $\Sigma_t$ intersects each arc of constant isotropy a fixed number of times (see Lemma \ref{intersect3}).  Note that iv. is already implied by Lemma \ref{actions} and the fact that the surfaces $\Sigma_t$ are smooth.
\end{remark}
\noindent

Given such a $G$-sweepout $\{\Sigma_t\}_{t=0}^1$ we may define the $G$-equivariant saturation $\Pi=\Pi_{\{\Sigma_t \}}$ identically as in \cite{cd} except where ``isotopy" is replaced by ``equivariant isotopy" (see Definition \ref{eqisotopy}).  We can then define the min-max width (where throughout this paper $\mathcal{H}^2$ denotes $2$-dimensional Hausdorff measure):
\begin{equation}\label{inf}
W^G_\Pi = \inf_{\{\Lambda_t\}\in\Pi}\sup_{t\in [0,1]}\mathcal{H}^2(\Lambda_t).
\end{equation}

It follows easily as in Proposition 1.4 in \cite{cd} that $W^G_\Pi >0$ (one is restricting to equivariant isotopies so  $W^G_\Pi$ is at least as large as the non-equivariant width considered in \cite{cd} using all isotopies).

We can then consider a sequence of sweepouts $\{\Sigma_t\}^i$ the area of whose maximal slice converges to $W^G_\Pi$.  From $\{\Sigma_t\}^i$ we may then choose a sequence of slices $\Sigma_i:=\Sigma_{t_i}^i$ with area converging to $W^G_\Pi$.  Such a sequence of surfaces we will call a \emph{min-max sequence}. 

We can now state our main results. Our following theorem on equivariant min-max was announced in some form in 1988 by Pitts-Rubinstein \cite{pr} but the author is not aware of a published proof.
\begin{theorem}\label{main} Let $M$ be a closed orientable Riemannian $3$-manifold and let $G$ be a finite group of orientation preserving isometries acting on $M$ such that $M/G$ is an orientable orbifold without boundary. 
Let $\{\Sigma_t\}_{t=0}^1$ be a $G$-sweepout of $M$ by surfaces of genus $g$.  Then we have the following:
\begin{enumerate}[label=\alph*.] \item There exists a min-max sequence $\Sigma_j$ converging as varifolds to $\Gamma=\sum_1^k n_i\Gamma_i$, where $\Gamma_i$ are smooth embedded pairwise disjoint minimal surfaces and $n_i$ are positive integers.  In particular, $M$ contains an embedded $G$-equivariant minimal surface.  
\item $\sum_{i=1}^k n_i\mathcal{H}^2(\Gamma_i)= W^G_\Pi$
\item For $j$ large enough, after performing finitely many $G$-equivariant neck-pinch surgeries on $\Sigma_j$ and discarding some components, each remaining component of $\Sigma_j$ is isotopic to one of the $\Gamma_i$ or to a double cover.  After this surgery process, for each $i$, there are $n_i$ components of the min-max sequence isotopic to $\Gamma_i$ if $\Gamma_i$ is orientable, and $n_i/2$ components isotopic to a double cover if $\Gamma_i$ is non-orientable.

\item Item c) implies the genus bound with multiplicity:
\begin{equation}\label{multi}
\sum_{i\in\mathcal{O}} n_ig(\Sigma_i)+\sum_{i\in\mathcal{N}}\frac{n_i}{2}(g(\Sigma_i)-1)\leq g,
\end{equation}
where $\mathcal{O}$ denotes the subcollection of $\Gamma_i$ that are orientable, and $\mathcal{N}$ denotes the subcollection that are non-orientable.
\item If $x\in\tilde{\mathcal{S}}_1\cap\Gamma_i$ and $G_x\neq\mathbb{Z}_2$ then $\tilde{\mathcal{S}}$ intersects $\Gamma_i$ orthogonally at $x$.
\item If $x\in\tilde{\mathcal{S}}_1\cap\Gamma_i$ and $G_x=\mathbb{Z}_2$ and either $\Gamma_i$ is orthogonal to $\tilde{\mathcal{S}}$ or else $\tilde{\mathcal{S}}$ is tangent to $\Gamma_i$ at $x$, in which case $\Gamma_i$ has even multiplicity and $\Gamma_i$ contains the arc of constant isotropy containing $x$.
\item $\Gamma$  can only intersect $\tilde{\mathcal{S}}_0$ at a point $x$ with isotropy group $\mathbb{D}_n$.   A component of $\Gamma$ containing such a point has even multiplicity.  In this case, such a component contains $2n$ of the arcs of isotropy $\mathbb{Z}_2$ intersecting at $x$ (if $n=2$, there are three possible such pairings, otherwise a unique set of such arcs).
\end{enumerate}
\end{theorem}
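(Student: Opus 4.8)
The plan is to run the Simon--Smith min-max scheme of \cite{smith},\cite{cd}, but with \emph{all} of its ingredients — the pull-tight deformation, the almost-minimizing competitors, and the neck-pinch surgeries of \cite{ketover} — carried out $G$-equivariantly, using the quotient orbifold $M/G$ as a bookkeeping device away from $\tilde{\mathcal{S}}$ and arguing directly in $M$ near $\tilde{\mathcal{S}}$. For the pull-tight I would use only $G$-equivariant vector fields on $M$ (equivalently, vector fields on $M/G$ tangent to $\mathcal{S}$), extracting a min-max sequence $\Sigma_j$ whose varifold limit $V=\sum_i n_i\Gamma_i$ is stationary among $G$-equivariant deformations. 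Since each $\Sigma_j$ is $G$-equivariant, $V$ is a $G$-invariant varifold, so averaging an arbitrary vector field over $G$ and invoking the first-variation identity \eqref{f} exactly as in the Introduction shows $\delta V\equiv 0$ in $M$: thus $V$ is stationary in all of $M$, which is the first half of part a.

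For regularity, I would adapt the combinatorial argument of Almgren--Pitts and \cite{cd} to the class of $G$-equivariant isotopies, so that $\Sigma_j$ may be taken $1/j$-almost minimizing in sufficiently small \emph{equivariant annuli} — an annulus disjoint from $\tilde{\mathcal{S}}$ together with its $G$-translates, or a $G$-invariant annulus centred on $\tilde{\mathcal{S}}$. Away from $\tilde{\mathcal{S}}$ the projection $\pi$ is a local isometry and equivariant isotopies supported in a small ball are unconstrained in $M/G$, so the Simon--Smith regularity theory applies verbatim there. At $x\in\tilde{\mathcal{S}}_1$ with $G_x=\bZ_n$, every tangent cone of $V$ at $x$ is a $\bZ_n$-invariant stationary cone which, by an equivariant version of Pitts' replacement property, is a multiplicity-$k$ plane; such a plane is orthogonal to the axis when $n\ge 3$, and either orthogonal to or containing the axis when $n=2$. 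Combined with the orthogonality of $\Sigma_j$ to $\tilde{\mathcal{S}}_1$ forced by condition v. of Definition~\ref{gsweepout} (cf.\ the Remark), an Allard/removable-singularity argument upgrades this to smoothness of $V$ across $x$, giving parts e and f (up to the multiplicity claim). At a vertex $x\in\tilde{\mathcal{S}}_0$, a neighborhood is a cone over $\bS^2/\Gamma_x$ with $\Gamma_x\in\{\mathbb{D}_{2n},T_{12},O_{24},I_{60}\}$; the same analysis produces a $\Gamma_x$-invariant tangent plane, but $T_{12},O_{24},I_{60}$ act irreducibly on $\mathbb{R}^3$ and fix no plane, so no component of $V$ reaches such a vertex, while $\mathbb{D}_{2n}$ fixes exactly the plane orthogonal to its $\bZ_n$-axis — this is the first sentence of part g. Upper semicontinuity of mass and the almost-minimizing lower bound then give part b.

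For parts c and d I would adapt the neck-pinch analysis of \cite{ketover}: since competitors must be $G$-equivariant, every neck that is pinched occurs either in a $G$-orbit of necks away from $\tilde{\mathcal{S}}$ or as a single $G$-invariant neck straddling an arc of $\tilde{\mathcal{S}}_1$ or a vertex of $\tilde{\mathcal{S}}_0$; after finitely many such surgeries each remaining component converges smoothly and graphically to some $\Gamma_i$, namely to $n_i$ parallel copies when $\Gamma_i$ is orientable, and — since the $\Sigma_j$ are orientable and a tubular neighborhood of a non-orientable $\Gamma_i$ is a twisted $I$-bundle — to $n_i/2$ copies of the orientable double cover when $\Gamma_i$ is non-orientable. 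Part d then follows from additivity of genus under surgery together with the fact that the orientable double cover of a non-orientable surface of genus $k$ has genus $k-1$. The remaining multiplicity assertions in e, f, g come from a sheet-counting argument: where $\Gamma_i$ contains an arc of $\bZ_2$-isotropy, the local sheets of $\Sigma_j$ over $\Gamma_i$ are permuted in pairs by the $\bZ_2$-action and none can be fixed, since a fixed sheet would contain the arc and violate transversality of $\Sigma_j$ to $\tilde{\mathcal{S}}$; hence $n_i$ is even, a unique-continuation argument shows $\Gamma_i$ contains the whole arc, and at a $\mathbb{D}_{2n}$-vertex the invariant plane $\perp$ to the $\bZ_n$-axis contains the $n$ perpendicular $\bZ_2$-axes (the $2n$ arcs; for $n=2$, any two of the three coordinate axes, yielding the three pairings), each forcing even multiplicity as above.

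I expect the main obstacle to be the regularity of $V$ along the singular graph $\tilde{\mathcal{S}}$, where one cannot simply descend to the smooth manifold $M\setminus\tilde{\mathcal{S}}$: one must set up an equivariant version of Pitts' replacement and comparison machinery in $G_x$-invariant annuli centred on $\tilde{\mathcal{S}}_1$ and $\tilde{\mathcal{S}}_0$ — in particular ruling out tangent cones that are unions of planes permuted by $G_x$ rather than a single multiplicity-$k$ plane — and reconcile it with the orthogonality constraint from Definition~\ref{gsweepout}. A secondary difficulty is performing the neck-pinch surgeries $G$-equivariantly, especially for necks degenerating onto $\tilde{\mathcal{S}}_1$, where the surgery itself must be chosen so as to remain in the equivariant competitor class.
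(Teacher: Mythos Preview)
Your outline matches the paper's architecture closely: equivariant pull-tight, $G$-stationary $\Rightarrow$ stationary by averaging, equivariant almost-minimizing in $G$-annuli, regularity away from $\tilde{\mathcal{S}}$ by descent to $M/G$, and the sheet-counting/neck-pinch arguments for parts c--g. You also correctly flag regularity across $\tilde{\mathcal{S}}$ as the crux.

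There are, however, two concrete ingredients that the paper isolates and that your sketch does not supply. First, the replacements $V_j$ built from the $1/j$-constrained equivariant minimization are only \emph{$G$-stable}; to invoke Schoen's curvature estimates and extract a smooth limit you need genuine stability. The paper proves $G$-stable $\Rightarrow$ stable (Proposition~\ref{gstability}) by observing that the lowest eigenfunction of the Jacobi operator is automatically $G$-invariant (positivity plus one-dimensionality of the bottom eigenspace), so the equivariant and full Rayleigh quotients coincide. This step is not automatic---see Remark~\ref{whynoref} for why it fails if $M/G$ has boundary or if the surface contains a $\mathbb{Z}_2$-arc---and your Allard-type shortcut does not replace it: Allard needs a density-one tangent plane, whereas here the tangent plane may well have multiplicity, and it is precisely the stability of the replacements that lets one run the gluing argument of \cite{cd} across $\tilde{\mathcal{S}}$.

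Second, to show that each $V_j$ is itself smooth one needs regularity for the \emph{equivariant Plateau problem} in a small $\mathbb{Z}_n$-ball: given equivariant planar-domain boundary data disjoint from the axis, an equivariantly minimizing sequence converges to a smooth surface (Proposition~\ref{minimizing}). The paper gives two different arguments. For $n\ge 3$ it is close to your tangent-cone sketch: the cone must be $\mathbb{Z}_n$-invariant and smooth off the axis, hence a plane orthogonal to the axis. For $n=2$ this fails---a $\mathbb{Z}_2$-invariant plane can \emph{contain} the axis---and the paper instead adapts Almgren--Simon \cite{almgrensimon}: $\gamma$-reduce, push pieces outside a small convex ball back in, reduce to a union of disks, and replace each disk by its (automatically equivariant, by Meeks--Yau) area-minimizer. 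Your proposal's phrase ``ruling out tangent cones that are unions of planes permuted by $G_x$'' does not capture this; the $n=2$ case genuinely requires a separate mechanism, and is exactly the case relevant to parts f and g.
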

\noindent
A few remarks about Theorem \ref{main} are in order:
\begin{remark}\normalfont
Theorem \ref{main} is trivial if $G$ acts freely on $M$, for in this case $M/G$ is a smooth manifold and one can run the min-max theory relative to a Heegaard splitting of $M/G$ and lift the resulting minimal surface up to $M$ to produce a $G$-equivariant minimal surface.  Thus the content of the theorem is in the situation when $G$ acts non-freely and where $M/G$ is an orbifold.  \end{remark}
\begin{remark}\normalfont
Even though only one parameter is used, because of the symmetry imposed on the sweepouts, the surfaces we produce will have high Morse index in general.  It would be interesting to determine this index.  The surfaces we produce should have equivariant index $1$.
\end{remark}
\begin{remark}\label{typeofneck}\normalfont
Let us explain the meaning of \emph{$G$-equivariant neck-pinch surgeries} in the statement of Theorem \ref{main}c.  Such neck-pinches consist of two varieties:  The first type is an \emph{ordinary neck-pinch} which is the removal of a cylinder from a surface and attachment of two disks so that the union of disk and cylinder bound a ball, all disjoint from the singular set $\tilde{\mathcal{S}}$.  If such a neck-pinch is performed, there are $|G|$ isometric copies of the neck-pinch which must also be performed concurrently to preserve equivariance.  The second type of neck-pinch we call a \emph{$\mathbb{Z}_n$-neckpinch}.  Here the cylinder removed is centered around an arc of isotropy $\mathbb{Z}_n$, and the two disks one adds in each intersect the singular arc once orthogonally.  The ball bounded by the two disks and cylinder contain only a piece of the arc of $\mathbb{Z}_n$ isotropy. In this case, there $|G|/n$ isometric copies of the neck-pinch which must be performed concurrently.
\end{remark}
\begin{remark} \normalfont Regarding items e,f, when no isotropy subgroups are $\mathbb{Z}_2$ we can prevent the min-max sequence from becoming tangent (``creasing" into) to the singular set.  However, if the isotropy subgroup is $\mathbb{Z}_2$ at a point, the min-max sequence {\it can} potentially press into the singular set (though we know of no instance where this actually happens). 
To see the relevance of the order of the cyclic group, consider $G=\mathbb{Z}_n$ acting on $\mathbb{R}^3$ by rotations of angle $2\pi/n$ about the z-axis.  When $n\neq 2$, the only plane passing through the origin that is $\mathbb{Z}_n$-equivariant is the horizontal plane: $\{(x,y,z)\;|\; z=0\}$ which is perpendicular to the singular set.  The problem is that for $n=2$, any planes of the form $\{(x,y,z) \;|\; ax+by=c\}$ is also $\mathbb{Z}_2$-equivariant. Thus a stable equivariant minimal surface can {\it contain} the singular set.  Worse yet, for each $\epsilon>0$ one has a $\mathbb{Z}_2$-equivariant stable surface: $$\Sigma_\epsilon = \{y=\epsilon\}\cup\{y=-\epsilon\}$$ that is disjoint from the singular set and yet as $\epsilon\rightarrow 0$ these surfaces converge with multiplicity two to the plane $\{y=0\}$ containing $\mathcal{S}$.  Thus in principle a min-max sequence could push with even multiplicity into the singular set.  

This behavior along curves of $\mathbb{Z}_2$ isotropy is not a problem for the regularity theory (since the failure of compactness of stable $\mathbb{Z}_2$-equivariant surfaces amounts to the formation of multiplicity, which anyway is allowed in the theory) but it is a problem when we want to control the genus of the min-max surface.   For instance in order to double the Clifford torus, the relevant group contains an involution that sends $(z,w)$ to $(w,z)$ in $\mathbb{C}^2$ and creates a curve contained in a Clifford torus with isotropy subgroup $\mathbb{Z}_2$. The min-max sequence may therefore converge to the Clifford torus with multiplicity two.  In joint work with F. C. Marques and A. Neves \cite{KMN} we introduce the ``catenoid estimate" to deal with this issue that arises in the min-max approach to doubling constructions.
\end{remark}

\begin{figure}
\includegraphics[scale=.3] {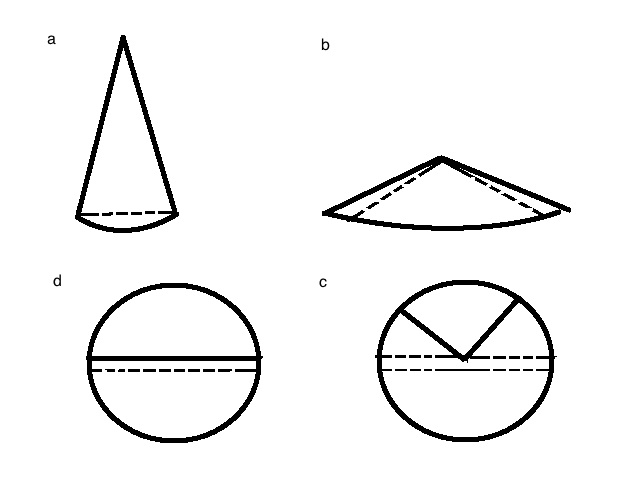}
\caption{In a) geodesics avoid the singular point when the cone angle is small.  In b) and c) we see that in wide-brimmed cones, length-minimizing geodesics can pass through the singular point.  In d) we see how when the cone angle is $\pi$, stable geodesics can converge into the cone point to give a degenerate geodesic with multiplicity 2.}
\end{figure}

Since we will see that min-max sequences are approximated by stable minimal surfaces, another way to see why the sequences cannot crease into the singular set when the isotropy group is not $\mathbb{Z}_2$ is to consider geodesics on the two-dimensional cones with cone angle $\theta$.  If the cone angle is very close to $2\pi$, stable geodesics pass through the singular point.  But we are only interested in the orbifold regime where $\theta=2\pi/n$ for some $n\in\mathbb{Z}$.  If $\theta<\pi$, stable geodesics always avoid the singular point.  The case $\theta=\pi$ is the borderline case where there exists a degenerate geodesic going from the base to the tip of the cone and back (see Figure 1).  The key point to take away is that $\mathbb{Z}_2$ area minimizing surfaces do satisfy a compactness theorem provided one allows the limit to have multiplicity. 

After proving existence and regularity of equivariant minimal surfaces, we then apply our construction to the study of minimal surfaces in round $\mathbb{S}^3$.   The classical minimal surfaces in $\mathbb{S}^3$ are the equatorial two-spheres and the Clifford torus and for a long time these were the only known surfaces.   Then in 1970 Lawson produced a minimal surfaces of every genus \cite{lawson}.  His technique was to study a symmetry group acting on the sphere, solve the Plateau problem for a polygon inscribed in the fundamental domain of the group action, and reflect about the edges to produce a closed embedded surface.   Karcher-Pinkall-Sterling \cite{kps} used the same technique to produce nine minimal surfaces associated with the Platonic solids (their method was somewhat different in that in each fundamental domain they solve a free-boundary problem rather than Plateau problem for a fixed quadrilateral).   More recently Choe-Soret \cite{choesoret} used Lawson's technique to produce new minimal surfaces.  The only other technique that has been successful is that of Kapouleas-Yang \cite{kapouleas} who used gluing techniques to desingularize two nearby Clifford tori by connecting them via many catenoidal necks. The gluing method has been applied also by Wiygul \cite{wigul1} who constructed minimal embeddings resembling ``stacks" of several nearby Clifford tori.  Finally Kapouleas \cite{kapouleas2} recently succeeded in doubling the equatorial $2$-sphere.  See Brendle's survey \cite{brendle} for more discussion of these results.

We will give a min-max construction of many of these surfaces and then construct eight new infinite families that resemble doublings and desingularizations of stationary varifolds in the round $3$-sphere.  These examples all appear in the original table in the announcement \cite{pr}.

Our methods are entirely different from Kapouleas' and Lawson's in that they are variational and global in nature -- we are simply doing Morse theory on the space of equivariant surfaces in a given manifold. The Morse-theoretic approach is  geometrically very natural, though the global nature of the sweepouts required could be a disadvantage in some cases as one cannot restrict to a tubular neighborhood of the surfaces one wants to double or desingularize. 

One definite advantage of our methods is that while in gluing constructions one always has to take the genus inserted along desingularizing curves to be large, in the min-max setting there is no such restriction. We expect that most if not all embedded desingularizations and doublings can be given a variational interpretation. \\

The organization of this paper is as follows.  In Section 2 we outline the main issues involved in the construction.  In Section  3 we will prove a weak version in the setting of Geometric Measure Theory of the symmetric critical points principle.  This will allow us to use the ``pull-tight" argument of Colding-De Lellis to produce an equivariant stationary varifold. In Section 4 we will prove the regularity of the stationary varifold produced by the min-max procedure.  In Section 5 we address the topology of the limiting minimal surfaces.  Finally in Section 6 we turn to examples and give new constructions of old minimal surfaces in $\mathbb{S}^3$ and produce many new examples.
\\
\\
\noindent
{\it Acknowledgements:  I am grateful to Renato Bettiol for pointing out the work \cite{palais}, Baris Coskunuzer for some conversations and Brian White for some discussion of Proposition \ref{minimizing}.  I also thank Fernando Marques, Andr\'e Neves and Toby Colding for their interest in this work.} 
\section{Main issues}
We explain the main points and difficulties of the construction in a more technical way.  We will be only working with isotopies that are $G$-equivariant.  The formal ``pull-tight" procedure of Colding-De Lellis allows one to work in this restricted family and produce a $G$-stationary varifold.   We will prove that $G$-stationary varifolds are in fact stationary which is a weak version of the Principle of Symmetric Criticality explained above.  Thus we can find min-max sequences converging to a stationary varifold $\Gamma_\infty$.

We then prove that our min-max sequence is almost minimizing (in the sense introduced by Almgren \cite{A} and later by Pitts) in annuli.  This is after all an abstract purely combinatorial argument using only the metric space property of the ambient space.  Thus by the regularity theory in Colding-De Lellis \cite{cd} we immediately obtain regularity away from $\tilde{\mathcal{S}}$.  But a stationary varifold that is smooth away from a one-dimensional set can be quite far from being smooth -- for instance two planes intersecting orthogonally in $\mathbb{R}^3$ is $\mathbb{Z}_4$-equivariant and smooth away from a line.  

The point is that we {\it do} have the almost minimizing property in annuli centered around points in the singular locus of the group action $\tilde{\mathcal{S}}$. The only difference is that in such annuli ``almost minimizing" means restricted to isotopies that are equivariant.   We can minimize among equivariant isotopies in such annuli to produce a $G$-equivariant minimal surface $V_j$ that is stable among equivariant deformations ($G$-stable).  But we will prove that $G$-stability implies stability among all variations as long as the surfaces in the sweepout intersect $\tilde{\mathcal{S}}$ transversally, which we are assuming (see items iv) and v) in Definition \ref{gsweepout}).  This implication uses standard facts about the first eigenfunction of Schr\"odinger operators.  Thus locally we obtain stable replacements $V_j$ and one can use the classical estimates to Schoen \cite{schoen} to produce a convergent subsequence, a ``replacement" $V_\infty$ for $\Sigma_\infty$ in an annulus around the singular set.  This is morally what is preventing the min-max sequence from ``creasing" to form an ``X" with multiplicity 2.  It then follows as in \cite{cd} that a stationary varifold that has smooth replacements in the above sense is itself a smooth minimal surface.

We also need that the $G$-stable replacements $V_j$ are in fact smooth minimal surfaces.  The way this was handled in the work of Colding-De Lellis is by constructing smooth replacements for them and appealing to the replacement theory again.  Here one uses that the minimizing sequence for the restricted $1/j$-problem that was used to produce $V_j$, is actually by a Squeezing Lemma minimizing among {\it all} isotopies in a small enough ball.  Then one can use the fact that a minimizing sequence for area (in the sense of Meeks-Simon-Yau \cite{msy}) has a smooth limit.  In our case what is needed is to prove that given a small ball $B$ centered around $\mathcal{S}$ of isotropy $\mathbb{Z}_n$ and an equivariant surface in the ball with boundary $\{\gamma_i\}_{i=1}^k\subset B$, one can minimize area among $G$-equivariant competitors to produce a smooth minimal surface with boundary $\{\gamma_i\}_{i=1}^k$.  One can even assume the genus of $\Sigma$ is zero.

If $k=1$ and one is thus seeking an equivariant area-minimizing disk, then it follows from the work of Meeks-Yau \cite{my} that {\it any} minimizing disk is in fact equivariant.   Since by Almgren-Simon \cite{almgrensimon} such a minimizing disk is smooth, this would complete the proof.  If $k>1$ and the curves bound multiple planar domains, the issue of whether the minimizing surface is equivariant seems more delicate. The Meeks-Yau cut-and-paste argument to prove that minimizers are equivariant does not appear to work (see the example of annuli on page 227 in \cite{my}).  There is a symmetrization procedure of Lawson \cite{lawson} to construct from a minimizing current an equivariant current with the same area but genus may incease in this process and so it is not directly applicable to our setting.  An elementary example going back to Federer (Section 5.4.17 in \cite{F}) in one lower dimension illustrates that the question of whether equivariant minimizers are minimizers among all competitors is quite delicate: consider four points at the vertices of a square.  Such a configuration is $\mathbb{Z}_4$-equivariant but the minimizing one dimensional current with such a boundary consists of the union or two vertical or horizontal line segments, neither of which is $\mathbb{Z}_4$-equivariant.

Thus to prove regularity of equivariant minimizers in this setting (Proposition \ref{minimizing}), we adapt ideas of Almgren-Simon \cite{almgrensimon} to perform appropriate neckpinches on the equivariantly minimizing sequence and do a replacement procedure so that the sequence consists of disks in a small ball about the singular axis.  This allows us to reduce to the case of disks where the equivariantly minimizing limits and limits minimizing among all isotopies coincide.  If $\mathbb{Z}_n\neq\mathbb{Z}_2$ we give a second argument based on ruling out the various singularities that can occur at $\mathcal{S}$.  The case $n=2$ is special because as observed earlier, the minimizing sequence may contain segments of the singular axis.  
\\
\indent In summary we need the following three ingredients:
\begin{enumerate}
\item $G$-stationary $\Rightarrow$ stationary
\item $G$-stable $\Rightarrow$ stable
\item A minimizing sequence for the equivariant Plateau problem for planar domains has a regular limit.
\end{enumerate}

\section{$G$-equivariant surfaces}
We first make some definitions.
\begin{definition}
\normalfont A varifold $\Sigma$ in $\Sigma\subset M$ is {\it$G$-equivariant} if for all $g\in G$, $g_\#\Sigma=\Sigma$. 
Likewise, a sweepout $\{\Sigma_t\}$ of $M$ is {\it $G$-equivariant} if for all $t$ and $g\in G$, $g_\#\Sigma_t=\Sigma_t$.
\end{definition}

\begin{definition}\label{eqisotopy}\normalfont
An isotopy $\Phi(t)$ is {\it $G$-equivariant} if $\Phi(t)=g^{-1}\circ\Phi(t)\circ g$ for all $t$ and $g\in G$.  Likewise, a vector field $\chi$ is $G$-equivariant if $g^{\#}(\chi)=\chi$ for all $g\in G$.
\end{definition}\noindent

Of course, any $G$-equivariant vector field induces a $G$-equivariant isotopy through integration.  Also note that the set of $G$-equivariant vector fields is itself a vector space: if $X$ and $Y$ are $G$-equivariant vector fields, so is $c_1X+c_2Y$ for any $c_1,c_2\in\mathbb{R}$.

Let us first address whether and how a smooth $G$-equivariant surface $\Gamma$ can intersect the singular set of the group action. At such a point $x\in\Gamma\cap\tilde{\mathcal{S}}$, the tangent plane $T_x\Sigma$ would have to be $G$-equivariant.  Thus we are interested in subgroups $G$ of $SO(3)$ which fix some plane $\mathcal{P}\subset T_xM$ setwise i.e. $g\mathcal{P}=\mathcal{P}$ for all $g\in G$.  In the following, we take the cyclic groups and dihedral groups to include rotations about the $z$ axis. 

We have the following elementary lemma:

\begin{lemma}\label{actions}
The finite subgroups of $SO(3)$ are $\mathbb{Z}_n$, $\mathbb{D}_n$ and the four groups associated with the Platonic solids. None of the Platonic groups fix a plane. The groups $\mathbb{Z}_n$ and $\mathbb{D}_n$ for $n\neq 2$ leave invariant only the $xy$-plane.   The group $\mathbb{Z}_2$ fixes the $xy$ plane and any plane containing the $z$-axis.  The group $\mathbb{D}_2$ fixes the $xy$-plane, the $yz$-plane, and $xz$-plane.
\end{lemma}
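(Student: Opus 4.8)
The plan is to split the lemma into its three assertions — the classification of finite subgroups of $SO(3)$, the fact that no Platonic group leaves a plane invariant, and the explicit list of invariant planes for $\mathbb{Z}_n$ and $\mathbb{D}_n$ — and dispatch each with elementary linear algebra after invoking one classical input. For the first sentence I would simply cite the standard classification (via the poles/Burnside counting argument): every finite subgroup of $SO(3)$ is conjugate to $\mathbb{Z}_n$, to $\mathbb{D}_n$, or to one of the rotation groups of the Platonic solids; nothing needs to be reproved there. Everything else follows from a single observation about invariant lines.

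The observation is: if $R\in SO(3)$ is a nontrivial rotation of angle $\theta$ about an axis $a$, and $\ell=\mathbb{R}v$ is a line through the origin, then $\ell$ is $R$-invariant as a set if and only if $Rv=\pm v$. Since the eigenvalues of $R$ are $1$ and $e^{\pm i\theta}$, this happens precisely when $v\in a$ (eigenvalue $1$) or when $\theta=\pi$ and $v\perp a$ (eigenvalue $-1$). A plane $\mathcal{P}$ through the origin is $R$-invariant iff its normal line $\mathcal{P}^\perp$ is, so by the dichotomy the setwise stabilizer in $SO(3)$ of a given plane $\mathcal{P}$ consists exactly of the rotations about $\mathcal{P}^\perp$ together with the $\pi$-rotations about axes lying in $\mathcal{P}$; this subgroup is a copy of $O(2)$. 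Hence any finite $G$ fixing a plane embeds in $O(2)$ and is therefore cyclic or dihedral. In particular the Platonic groups, not being subgroups of $O(2)$ — they are nonabelian, hence not cyclic, and not isomorphic to any $\mathbb{D}_n$ (for instance $A_4$, $S_4$, $A_5$ contain no element whose order is half the group order) — cannot fix any plane, which is the second sentence of the lemma.

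It remains to list the invariant planes of $\mathbb{Z}_n$ and $\mathbb{D}_n$ with the normalizations fixed in the statement. If $n\geq 3$, a generator of $\mathbb{Z}_n$ is a rotation of angle $2\pi/n\notin\{0,\pi\}$ about the $z$-axis, so by the observation the only invariant line is the $z$-axis and hence the only invariant plane is the $xy$-plane; since $\mathbb{Z}_n\subset\mathbb{D}_n$ this is also the only candidate for $\mathbb{D}_n$, and one checks it is indeed invariant because the $\pi$-rotations generating $\mathbb{D}_n$ are about horizontal axes, which reverse the $z$-axis. For $\mathbb{Z}_2=\{e,R_z(\pi)\}$ the line $\mathbb{R}(a,b,c)$ is invariant iff $(-a,-b,c)=\pm(a,b,c)$, i.e. iff $c=0$ or $a=b=0$; taking normals, the invariant planes are the $xy$-plane and every plane containing the $z$-axis. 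For $\mathbb{D}_2=\{e,R_x(\pi),R_y(\pi),R_z(\pi)\}$ an invariant line must lie in the invariant-line set of each of the three $\pi$-rotations; intersecting these sets (for $R_z$: the $z$-axis together with all lines in $\{z=0\}$, and cyclically for $R_x$ and $R_y$) leaves exactly the three coordinate axes, so the invariant planes of $\mathbb{D}_2$ are precisely the $xy$-, $yz$- and $xz$-planes.

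I do not anticipate a genuine obstacle: once ``invariant as a set'' is translated into $Rv=\pm v$ the whole argument is a short eigenvalue computation. The only points that need a touch of care are (i) not confusing setwise invariance with pointwise fixing — this is exactly the content of the dichotomy — and (ii) verifying in the $\mathbb{D}_2$ case that the three families of invariant lines intersect in the three coordinate axes and nothing more.
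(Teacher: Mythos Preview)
Your argument is correct. The paper does not actually prove this lemma --- it is introduced as ``the following elementary lemma'' and stated without proof --- so there is no approach to compare against. Your reduction to invariant normal lines via the dichotomy $Rv=\pm v$, together with the observation that the setwise stabilizer of a plane is a copy of $O(2)$ (disposing of the Platonic groups at once), is a clean and complete way to supply the omitted details.
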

\noindent
From Lemma \ref{actions} we conclude immediately that a smooth $G$-equivariant surface $\Gamma$ can only intersect $\tilde{\mathcal{S}}$ in a point of type $\mathbb{Z}_n$ or $\mathbb{D}_n$.  The next lemma addresses points of $\mathbb{Z}_n$ type: it says that it can never be tangent to $\tilde{\mathcal{S}}$ unless $G=\mathbb{Z}_2$ where $\Gamma$ could be a surface Schwarz-reflected through $\mathcal{S}$.

\begin{lemma}\label{intersect}  Let $B$ be a $\mathbb{Z}_n$-ball and $\Sigma$ a smooth embedded $\mathbb{Z}_n$-equivariant surface  contained in $B$ with $\partial\Sigma\subset\partial B$.  Denote by $\mathcal{S}$ the singular set of the action of $\mathbb{Z}_n$ on $B$.
\begin{enumerate}
 \item If $n\neq 2$, then for any $x\subset\Sigma\cap\mathcal{S}$ we have $T_x\Sigma\perp\mathcal{S}$.
\item If $n=2$ then either $\mathcal{S}\subset\Sigma$ (and $\Sigma$ is a Schwarz reflection through $\mathcal{S}$) or else $\Sigma\cap\mathcal{S}$ is empty or a finite set, and for any $x\subset\Sigma\cap\mathcal{S}$ we have $T_x\Sigma\perp\mathcal{S}$.
\end{enumerate}
\end{lemma}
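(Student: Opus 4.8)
The plan is to linearize the $\mathbb{Z}_n$-action at a point of $\Sigma\cap\mathcal{S}$ and then simply read off the admissible tangent planes from Lemma~\ref{actions}; the only point requiring real work is the borderline case $n=2$, where I must rule out the possibility that $\Sigma$ is tangent to $\mathcal{S}$ along a proper sub-arc of $\mathcal{S}$.

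First, fix $x\in\Sigma\cap\mathcal{S}$ and let $g$ generate $\mathbb{Z}_n$. Since $g$ is an isometry with $g(x)=x$, in geodesic normal coordinates centered at $x$ the map $g$ coincides with its differential $dg_x\in SO(3)$ (the exponential map at $x$ intertwines $g$ with $dg_x$). As $g$ fixes the singular arc near $x$, the map $dg_x$ fixes the line $T_x\mathcal{S}$ and is therefore rotation by $2\pi/n$ about it, so $\{dg_x^{\,k}\}$ is a copy of $\mathbb{Z}_n\subset SO(3)$ with rotation axis $T_x\mathcal{S}$. Because $\Sigma$ is a smooth embedded surface with $g(\Sigma)=\Sigma$ and $g(x)=x$, the plane $T_x\Sigma$ is $dg_x$-invariant, so Lemma~\ref{actions} applies with $T_x\mathcal{S}$ in the role of the $z$-axis: if $n\neq 2$ the unique invariant plane is the one orthogonal to $T_x\mathcal{S}$, which proves (1) immediately; if $n=2$ we are left with the two alternatives $T_x\Sigma\perp\mathcal{S}$ or $T_x\mathcal{S}\subset T_x\Sigma$.

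For the $n=2$ case I would then show that a single tangency propagates. Suppose $T_x\mathcal{S}\subset T_x\Sigma$ and choose normal coordinates $(u,v,w)$ at $x$ with $w$ along $\mathcal{S}$ and $T_x\Sigma=\{v=0\}$, so that $g(u,v,w)=(-u,-v,w)$ and, on a small symmetric neighborhood, $\Sigma$ is a graph $\{v=f(u,w)\}$ with $f(0,0)=0$ and $df_{(0,0)}=0$. Uniqueness of the graph point over each base point together with $g(\Sigma)=\Sigma$ forces $-f(u,w)=f(-u,w)$, i.e. $f$ is odd in $u$, hence $f(0,w)\equiv 0$ and the axis $\{u=v=0\}=\mathcal{S}$ lies in $\Sigma$ near $x$. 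Thus the set $A=\{y\in\mathcal{S}:y\in\Sigma,\ T_y\mathcal{S}\subset T_y\Sigma\}$ is open in $\mathcal{S}$; it is also closed, since for $x_j\in A$ with $x_j\to x_\infty\in\mathcal{S}$ one has $x_\infty\in\Sigma$ ($\Sigma$ being properly embedded, hence closed in $B$), while $T_{x_j}\Sigma\to T_{x_\infty}\Sigma$ and $T_{x_j}\mathcal{S}\to T_{x_\infty}\mathcal{S}$ by smoothness and ``a line lies in a plane'' is a closed condition. As $\mathcal{S}$ is a single geodesic arc in the $\mathbb{Z}_n$-ball $B$ and therefore connected, $A$ is either empty or all of $\mathcal{S}$: in the latter case $\mathcal{S}\subset\Sigma$, and $g$ restricts to an isometric involution of $\Sigma$ fixing the geodesic $\mathcal{S}\subset\Sigma$ pointwise, hence acts on the component of $\Sigma$ containing $\mathcal{S}$ as the reflection across $\mathcal{S}$, i.e. $\Sigma$ is a Schwarz reflection through $\mathcal{S}$.

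Finally, if $A=\emptyset$ then every point of $\Sigma\cap\mathcal{S}$ has $T_x\Sigma\perp\mathcal{S}$, so each such intersection is transverse and hence isolated, and since $\mathcal{S}$ is a compact arc the set $\Sigma\cap\mathcal{S}$ is finite (or empty), completing (2). I expect the main obstacle to be exactly this $n=2$ dichotomy — excluding partial tangency — which the oddness of $f$ in the normal-coordinate graph, combined with the open–closed/connectedness argument on $\mathcal{S}$, resolves; everything else follows routinely once the linearization at the fixed point and Lemma~\ref{actions} are in place.
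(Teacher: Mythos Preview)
Your proof is correct and follows essentially the same approach as the paper: linearize at a fixed point and invoke Lemma~\ref{actions} for part~(1), then in the $n=2$ case write $\Sigma$ locally as a graph in normal coordinates, use equivariance to force the graph function to be odd in the direction transverse to the axis, and run an open--closed connectedness argument on $\mathcal{S}$. Your version is in fact slightly more careful than the paper's (your coordinate computation is cleaner, and you explicitly supply the finiteness argument for $\Sigma\cap\mathcal{S}$, which the paper's proof omits).
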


\begin{proof}
We first prove (1). If $x\subset\Sigma\cap\mathcal{S}$, then since $\Sigma$ is smooth it has a unique tangent plane at $x$ in $T_xM$.  But the plane must be $\mathbb{Z}_n$-equivariant,  and the only such plane is the unique plane orthogonal to $\mathcal{S}$ at $x$.  For (2), set $G=\mathbb{Z}_2$ and suppose $x\in\mathcal{S}\cap\Sigma$.  By the reasoning above, the tangent plane $T_x\Sigma$ is either tangent to $\mathcal{S}$ or othogonal.  Suppose $T_x\Sigma$ is tangent to $\mathcal{S}$.  We claim $\mathcal{S}\subset\Sigma$.  By equivariance this then implies that $\Sigma$ is Schwarz reflection through $\mathcal{S}$.  First consider the set 
\begin{equation} 
C=\{x\in\mathcal{S}\;|\; x\subset\Sigma \mbox{ and } \Sigma \mbox{ tangent to } \mathcal{S} \mbox{ at } x\}.
\end{equation}

Certainly $C$ is closed by continuity.  It is nonempty by hypothesis.  We will show it is open, and therefore $C=\mathcal{S}$.  To see this, fix $y\in C$. Since $\Sigma$ is smooth, it can be written as a graph over the tangent plane $T_y\Sigma$ in a small neighborhood (after pulling back via exponential coordinates).  Precisely, let $\exp_y:T_xM\rightarrow M$ be the exponential map and consider the surface 
\begin{equation}
\tilde{\Sigma}=\exp^{-1}_y(\Sigma\cap B_\epsilon(x))
\end{equation}
for suitably small $\epsilon$.  Rotate the coordinates of $T_y M$ so that $T_y\Sigma$ is the $xy$ plane and $\mathcal{S}$ maps to the $x$ axis.  Note that $\mathcal{S}$ maps to an axis because it is a geodesic.  Then $\tilde{\Sigma} = \text{graph}(f)$ where $f(0,0)=0$ by assumption and by equivariance $f(-x,y)=-f(x,y)$ for all $(x,y)$ small enough.   But this implies $f(0,y)=0$ for $y$ small enough.  This is precisely saying that $\Sigma$ contains the geodesic segment $\mathcal{S}$ in a neighborhood of $y$.   Hence $C$ is open and therefore $\mathcal{S}\subset\Sigma$.  
\end{proof}
It remains to see whether and how $\Gamma$ can intersect points of $\tilde{\mathcal{S}}_0$ of type $\mathbb{D}_n$.  

\begin{lemma}\label{intersect2}  Let $B$ be a $\mathbb{D}_n$-ball and $\Sigma$ a smooth embedded $\mathbb{D}_n$-equivariant minimal surface contained in $B$ with $\partial\Sigma\subset\partial B$.   Let $\mathcal{S}$ denote the singular set of $\mathbb{D}_n$ acting on $B$.  It consists of a central point $z$ of isotropy $\mathbb{D}_n$, $\mathcal{S}'$ the union of $2n$ line segments each with both endpoints in $\partial B$ and all meeting at $z$ (i.e., the rotations of isotropy $\mathbb{Z}_2$), and an arc $\mathcal{S}''$ through $z$ orthogonal to $\mathcal{S}'$ with isotropy $\mathbb{Z}_n$.  Note that $\mathcal{S}'$ and $\mathcal{S}''$ intersect only at $z$.  Suppose $\Sigma$ contains $z$ in its support.   Then 
\begin{enumerate}
 \item If $n\neq 2$, then $\Sigma$ contains $\mathcal{S}'$ and is orthogonal to $\mathcal{S}''$.
\item If $n=2$ then either $\Sigma$ contains $\mathcal{S}'$ and is orthogonal to $\mathcal{S}''$, or else $\Sigma$ contains one of the geodesic segments comprising $\mathcal{S}'$ as well as $\mathcal{S}''$ (and is a Schwarz-reflection through this latter set).
\end{enumerate}

\end{lemma}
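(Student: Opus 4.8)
The plan is to reduce everything to the local behaviour of $\Sigma$ at the vertex $z$, using Lemmas~\ref{actions} and~\ref{intersect}. Since $\partial\Sigma\subset\partial B$, the centre $z$ is an interior point of $\Sigma$, so $\Sigma$ has a tangent plane $T_z\Sigma\subset T_zM$; and because $\Sigma$ is $\mathbb{D}_n$-equivariant and $z$ is fixed by $\mathbb{D}_n$, the plane $T_z\Sigma$ must be $\mathbb{D}_n$-invariant. By Lemma~\ref{actions} there are exactly two possibilities: either $T_z\Sigma$ is the plane orthogonal to $\mathcal{S}''$ (it then contains the direction at $z$ of every segment of $\mathcal{S}'$), or --- and this can occur only when $n=2$ --- $T_z\Sigma$ is one of the two planes spanned by $\mathcal{S}''$ and one of the lines making up $\mathcal{S}'$. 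I treat the two cases in turn.

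Suppose first $T_z\Sigma\perp\mathcal{S}''$; this is the only option when $n\neq 2$ and the first alternative when $n=2$. Since $\Sigma$ is equivariant under the cyclic group $\mathbb{Z}_n$ of rotations fixing $\mathcal{S}''$, for which $B$ is a $\mathbb{Z}_n$-ball with singular set $\mathcal{S}''$, Lemma~\ref{intersect} (part (1) if $n\neq 2$, part (2) if $n=2$), together with $T_z\Sigma\perp\mathcal{S}''$, shows that $\Sigma$ does not contain $\mathcal{S}''$ and meets it orthogonally at every intersection point. To see $\Sigma\supset\mathcal{S}'$, fix a segment $\ell\subset\mathcal{S}'$ and let $\rho\in\mathbb{D}_n$ be the order-two rotation whose axis contains $\ell$. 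In geodesic normal coordinates at $z$ --- in which $\rho$ acts by its linearization and $\ell$ is a straight ray, being a geodesic through $z$ --- write $\Sigma$ near $z$ as a graph $\{(v,f(v)):v\in T_z\Sigma\}$. On $T_z\Sigma$ the map $\rho$ is the reflection across $\ell$, and it reverses the normal of $T_z\Sigma$; equivariance of the graph therefore forces $f(v')=-f(v)$ whenever $v'$ is the reflection of $v$ across $\ell$, so $f\equiv 0$ on $\ell$, that is, $\Sigma\supset\ell$ near $z$. This holds for every segment of $\mathcal{S}'$. To continue each such $\ell$ out to $\partial B$, let $A$ be the set of interior points of $\ell$ near which $\Sigma$ contains $\ell$; it is open and, by the previous step, nonempty. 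If $x_k\in A$ and $x_k\to x_\infty$ with $x_\infty$ an interior point of $\ell$, then $x_\infty\in\Sigma$ by closedness, and the isotropy at $x_\infty$ is exactly $\langle\rho\rangle\cong\mathbb{Z}_2$ since $\mathcal{S}'\cap\mathcal{S}''=\{z\}$ and the segments of $\mathcal{S}'$ meet only at $z$; applying Lemma~\ref{intersect}(2) to $\langle\rho\rangle$ on a small $\mathbb{Z}_2$-ball about $x_\infty$ gives either $\ell\subset\Sigma$ near $x_\infty$ (so $x_\infty\in A$) or $\Sigma\cap\ell$ finite near $x_\infty$, the latter contradicting $x_k\to x_\infty$. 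Hence $A$ is closed and equal to the whole open segment $\ell$, and with $z\in\Sigma$, $\partial\Sigma\subset\partial B$ and closedness we get $\ell\subset\Sigma$. Thus $\Sigma\supset\mathcal{S}'$ and $\Sigma\perp\mathcal{S}''$, which is conclusion (1) and the first alternative of (2).

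Now suppose $n=2$ and $T_z\Sigma$ is the plane spanned by $\mathcal{S}''$ and one of the lines $\ell_0$ of $\mathcal{S}'$. Then $T_z\Sigma$ is tangent to $\mathcal{S}''$ at $z$, so the orthogonal/finite-intersection alternative of Lemma~\ref{intersect}(2), applied to the $\mathbb{Z}_2$ fixing $\mathcal{S}''$, is impossible; hence $\mathcal{S}''\subset\Sigma$ and $\Sigma$ is a Schwarz reflection through $\mathcal{S}''$. Likewise $T_z\Sigma$ contains the direction of $\ell_0$, so the same alternative of Lemma~\ref{intersect}(2) applied to the involution whose axis contains $\ell_0$ is impossible, giving $\ell_0\subset\Sigma$ near $z$; the open--closed argument above extends this to all of $\ell_0$. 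Since $\Sigma$ is equivariant under the $\pi$-rotation about $\ell_0$ and contains $\ell_0$, it is also a Schwarz reflection through $\ell_0$, so $\Sigma$ is a Schwarz reflection through $\mathcal{S}''\cup\ell_0$. (The remaining line of $\mathcal{S}'$ is orthogonal to $T_z\Sigma$, and Lemma~\ref{intersect}(2) shows $\Sigma$ meets it orthogonally without containing it, consistent with the dichotomy.) This is the second alternative of (2).

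The main obstacle is the pair of linked steps in the first case: passing from ``$\Sigma$ contains $z$'' to ``$\Sigma$ contains the relevant singular segments \emph{in a neighbourhood of} $z$,'' via the graph representation and the order-two rotations, and then the open--closed continuation of this out to $\partial B$. Minimality of $\Sigma$ does not appear to be needed for the argument as sketched.
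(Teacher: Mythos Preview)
Your argument is correct and follows the same route as the paper: determine $T_z\Sigma$ via Lemma~\ref{actions}, then use the graphical description and the $\mathbb{Z}_2$ rotations to force $\Sigma$ to contain the relevant singular arcs, exactly as in Lemma~\ref{intersect}. The paper's proof is considerably terser --- it simply invokes ``by the equivariance exactly as in Lemma~\ref{intersect}'' for part~(1) and cites the three possible tangent planes from Lemma~\ref{actions} for part~(2) --- whereas you spell out the graph computation, the open--closed continuation along each segment of $\mathcal{S}'$, and the full case analysis when $n=2$. Your observation that minimality is not used is correct; the paper's proof likewise makes no appeal to it.
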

\begin{proof}
First let us suppose $n\neq 2$ and that $\Gamma$ passes through the point $z$ in the $\mathbb{D}_n$ ball.  Consider as in the proof of Lemma \ref{intersect} $\tilde{\Gamma}=\exp_z^{-1}(\Sigma\cap B(z))$.  After a rotation, by equivariance Lemma \ref{actions}, $T_{(0,0,0)}\tilde{\Gamma}$ must be the $xy$ plane and near $(0,0,0)$, $\tilde{\Gamma}$ is a graph $G$ over its tangent plane.  It follows by the equivariance exactly as in Lemma \ref{intersect} that $\Gamma$ vanishes on the arcs comprising $\mathcal{S}'$ and thus $\Sigma$ contains $\mathcal{S}'$.  If $n=2$, then by Lemma \ref{actions} there are three possible configurations for the tangent plane at $z$ which gives (2).
\end{proof}

Finally, we see:
\begin{lemma}\label{intersect3}
The number of intersections of a smooth $G$-equivariant surface intersecting $\mathcal{S}$ transversally with each arc of constant isotropy is unchanged after applying an equivariant isotopy.  
\end{lemma}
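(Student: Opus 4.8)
The plan is to show that the intersection number is a locally constant integer-valued function of the isotopy parameter, hence constant. Let $\Phi(s)$, $s \in [0,1]$, be a $G$-equivariant isotopy, $\Sigma$ a smooth $G$-equivariant surface meeting $\mathcal{S}$ transversally, and $\gamma$ a fixed arc of constant isotropy in $\tilde{\mathcal{S}}_1$, say with isotropy group $\mathbb{Z}_n$. First I would observe that $G$-equivariance of the isotopy forces $\Phi(s)$ to map $\tilde{\mathcal{S}}$ to itself: since $\Phi(s) = g^{-1}\circ\Phi(s)\circ g$ for all $g\in G$, the point $\Phi(s)(x)$ has the same isotropy subgroup as $x$, so each arc of constant isotropy is carried into the set of points with that isotropy type. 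Because the isotopy is continuous in $s$ and starts at the identity, a connectedness argument shows $\Phi(s)(\gamma) = \gamma$ for every $s$ (more precisely, $\Phi(s)$ fixes $\gamma$ setwise; it may move points along $\gamma$ but cannot push them off). Thus the question reduces to counting $\Sigma_s \cap \gamma$ where $\Sigma_s = \Phi(s)(\Sigma)$, and $\gamma$ is a fixed compact arc (with endpoints in $\tilde{\mathcal{S}}_0$, which by Definition \ref{gsweepout}iv the surfaces do not meet, or a closed curve).

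Next I would argue that transversality of $\Sigma_s$ with $\mathcal{S}$ is preserved for all $s$. By Lemma \ref{intersect}(1), at any point $x \in \Sigma_s \cap \gamma$ with $n\neq 2$ the tangent plane $T_x\Sigma_s$ is forced to be orthogonal to $\gamma$ by equivariance alone, so the intersection is automatically transverse; when $n = 2$, Lemma \ref{intersect}(2) gives that either $\Sigma_s$ contains $\gamma$ entirely (excluded, since we assume transversal intersection and $\Phi$ is an isotopy of $\Sigma$, which does not contain $\gamma$) or $T_x\Sigma_s \perp \gamma$ again. Hence for every $s$, $\Sigma_s$ meets $\gamma$ orthogonally, in particular transversally, and the intersection $\Sigma_s \cap \gamma$ is a finite set of points. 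Then the count $N(s) := \#(\Sigma_s \cap \gamma)$ is well-defined for all $s$.

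Finally I would show $N(s)$ is locally constant. Fix $s_0$ and an intersection point $x_0 \in \Sigma_{s_0} \cap \gamma$. Since the intersection is transverse and $\Sigma_{s_0}$ is a smooth embedded surface varying smoothly in $s$ (as $\Phi$ is a smooth isotopy), the standard implicit-function / tubular-neighborhood argument gives a neighborhood $U$ of $x_0$ and $\varepsilon > 0$ such that for $|s - s_0| < \varepsilon$, $\Sigma_s \cap \gamma \cap U$ is exactly one point depending continuously on $s$. Covering the compact set $\Sigma_{s_0}\cap\gamma$ by finitely many such neighborhoods, and using that no new intersection points can appear away from these neighborhoods for $s$ near $s_0$ (again by compactness of $\gamma$ and smoothness of the family, using that $\Sigma_{s_0}$ stays a bounded distance from the finitely many other points of $\gamma$), we conclude $N(s) = N(s_0)$ for $s$ near $s_0$. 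As $[0,1]$ is connected, $N$ is constant.

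The main obstacle I anticipate is the bookkeeping around the $n=2$ case and the endpoints: one must be careful that the equivariant isotopy cannot create a tangency at a $\mathbb{Z}_2$-arc that would let an intersection point split into two or annihilate in pairs, and that the surfaces never reach $\tilde{\mathcal{S}}_0$ during the isotopy — but both are handled cleanly by Lemma \ref{intersect} together with the hypothesis that the intersection with $\mathcal{S}$ remains transversal and that $\Sigma$ is the isotopic image of a surface that does not contain any singular arc. Everything else is the routine transversality-implies-locally-constant-intersection-number argument.
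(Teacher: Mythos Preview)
Your proof is correct and takes a genuinely different route from the paper's. The key idea you introduce, which the paper does not use, is that an equivariant isotopy $\Phi(s)$ preserves isotropy subgroups pointwise and therefore fixes each arc $\gamma$ of constant isotropy setwise. The paper instead argues by contradiction: it posits a first time $t_0$ at which the count changes or a tangency appears, and in the delicate $n=2$ case rules out tangency by showing that near $t_0$ the surfaces $\phi_t(\Sigma)$ would have to converge to $\phi_{t_0}(\Sigma)$ with even multiplicity (the nearby sheets being interchanged in pairs by the $\mathbb{Z}_2$ action), contradicting smoothness of the isotopy. Your exclusion of the $n=2$ tangency is cleaner: if $\Sigma_s\supset\gamma$ then, since $\Phi(s)^{-1}(\gamma)=\gamma$, also $\Sigma\supset\gamma$, contrary to the assumed transversality of $\Sigma$.

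In fact your observation yields a proof shorter than either argument. Once $\Phi(s)(\gamma)=\gamma$, the diffeomorphism $\Phi(s)$ gives directly
\[
\Sigma_s\cap\gamma \;=\; \Phi(s)(\Sigma)\cap\Phi(s)(\gamma) \;=\; \Phi(s)(\Sigma\cap\gamma),
\]
so the cardinality is constant without any appeal to transversality or the implicit function theorem; your second and third paragraphs are correct but unnecessary. What your approach buys is a conceptual reason the count is rigid (the isotopy slides intersection points along $\gamma$ but cannot create or destroy them); what the paper's approach buys is an argument that does not need to verify the stratification is preserved, at the cost of the multiplicity analysis in the $\mathbb{Z}_2$ case.
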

\begin{proof}
Equivariant isotopies, as isotopies, preserve the set of smooth surfaces.  Thus let $\Sigma$ be a surface intersecting each arc of constant isotropy in $M$ a fixed number of times.  Let $\phi_t$ (for $t\in[0,1]$) be an equivariant isotopy so that $t_0>0$ is the first time that $\phi_{t_0}(\Sigma)$ intersects an arc of constant isotropy $\mathbb{Z}_n$ in a different number of points or becomes tangent to an arc of $\mathbb{Z}_n$ isotropy.  If the surface $\phi_{t_0}(\Sigma)$ for instance is tangent to $\mathbb{Z}_n$ at a point $p$, then by Lemma \ref{intersect} since $\phi_{t_0}(\Sigma)$ is smooth, it follows that $n=2$. For $t$ slightly less than $t_0$ $\phi_t(\Sigma)$ consists near $p$ of an even number of graphs each converging to $\phi_{t_0}(\Sigma)$ near $p$. To see this, if any graph were preserved by the $\mathbb{Z}_2$ action it would vanish on the singular axis by equivariance, in which case $\phi_{t_0}(\Sigma)$ would contain several sheets passing through the axis and thus not be smooth.  Thus the graphs are all interchanged by the group $\mathbb{Z}_2$, which means the number of them is even.  It follows that $\phi_{t_0}(\Sigma)$ is achieved as a limit with multiplicity and so $t\rightarrow\phi_t(\Sigma)$ are not a smoothly varying family of surfaces for $t$ near $t_0$.  This contradicts the fact that $\phi$ is an isotopy. 

Thus we need only consider the case that $\Sigma$ intersects an arc of constant isotropy orthogonally $k$ times for $t\leq t_0$, and yet $\phi_{t_0}(\Sigma)$ contains fewer or more than $k$ intersection points with the arc.  But it follows from Lemma  \ref{intersect} that the intersections of $\phi_{t}(\Sigma)$ with the singular arcs are all orthogonal.  Thus since the surfaces $\phi_{t}(\Sigma)$ vary smoothly, $\phi_{t}(\Sigma)$ around each point of intersection with the singular arc is a graph intersecting the singular arc once, and thus the number of intersection points of $\phi_{t_0}(\Sigma)$ with the singular arc is constant for $t$ near $t_0$.  This contradicts that $t_0$ is the first time the number changes. 
\end{proof}
\subsection{Existence of a $G$-stationary varifold}

\begin{definition}\normalfont
A varifold $\mathcal{V}$ is called {\it $G$-stationary} in an open set $\mathcal{U}$ if for every $G$-equivariant vector field $\chi$ supported compactly in $\mathcal{U}$, we have $\delta\mathcal{V}(\chi)=0$. \end{definition} 

The next key lemma says that a varifold that has zero variation with respect to equivariant deformations is in fact stationary with respect to all deformations.  It is a weak formulation of the Symmetric Critical Point principle articulated by Palais \cite{palais}.

\begin{lemma}\label{gstat}
A $G$-equivariant $G$-stationary varifold $\mathcal{V}$ in $M$ is stationary.
\end{lemma}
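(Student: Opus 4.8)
The plan is to exploit the linearity of the first variation in the vector field together with an averaging (Reynolds-type) operator over the finite group $G$. Given an \emph{arbitrary} smooth vector field $X$ compactly supported in $M$ (not necessarily equivariant), define its $G$-average
\[
\bar X = \frac{1}{|G|}\sum_{g\in G} g^{\#}(X).
\]
One checks directly from the definition that $\bar X$ is $G$-equivariant: applying any $h\in G$ permutes the summands. Since the map $X\mapsto \delta\mathcal V(X)$ is linear in $X$, and since $\mathcal V$ is $G$-stationary, we get $\delta\mathcal V(\bar X)=0$ for every $X$. So the whole content of the lemma is to show that
\[
\delta\mathcal V(X) = \delta\mathcal V(\bar X)
\]
for every $X$, which by linearity amounts to showing $\delta\mathcal V\big(g^{\#}(X)\big)=\delta\mathcal V(X)$ for each fixed $g\in G$.

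This last identity is exactly the statement that the first variation is invariant under pushforward by an isometry, applied to the varifold $\mathcal V$: one has in general $\delta(\phi_{\#}\mathcal V)(X) = \delta\mathcal V(\phi^{\#}X)$ when $\phi$ is a diffeomorphism, and moreover when $\phi$ is an \emph{isometry} the first variation transforms covariantly so that $\delta\mathcal V(g^{\#}X) = \delta(g_{\#}\mathcal V)(X)$. Using the hypothesis that $\mathcal V$ is $G$-equivariant, i.e. $g_{\#}\mathcal V = \mathcal V$, this gives $\delta\mathcal V(g^{\#}X) = \delta\mathcal V(X)$, as needed. Concretely, I would verify this by writing $\delta\mathcal V(X) = \int \operatorname{div}_S X(x)\, d\mathcal V(S,x)$ over the Grassmann bundle, noting that for an isometry $g$ one has $\operatorname{div}_{dg(S)}X(g x) = \operatorname{div}_S\!\big(g^{\#}X\big)(x)$ pointwise (the tangential divergence is metric-intrinsic), and then changing variables in the integral using $g_{\#}\mathcal V=\mathcal V$. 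Averaging over $g\in G$ then yields $\delta\mathcal V(X) = \frac{1}{|G|}\sum_g \delta\mathcal V(g^{\#}X) = \delta\mathcal V(\bar X) = 0$, so $\mathcal V$ is stationary.

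I do not expect a serious obstacle here: this is the standard ``symmetric criticality'' argument specialized to the varifold setting, and every ingredient (linearity of $\delta\mathcal V$, the behavior of tangential divergence under isometries, invariance of the Grassmann-bundle integral under an isometric pushforward) is elementary. The only point requiring a little care is bookkeeping with the pushforward/pullback conventions for vector fields and varifolds — making sure that $g^{\#}$ on vector fields and $g_{\#}$ on varifolds are set up so that the change-of-variables formula $\int \operatorname{div}_S(g^{\#}X)\,d\mathcal V = \int \operatorname{div}_S X\, d(g_{\#}\mathcal V)$ holds with the right placement of $g$ versus $g^{-1}$. Since $G$ is finite one never confronts any convergence or density issue, so once the conventions are fixed the proof is a one-line computation plus the averaging remark. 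I would also note explicitly that $\bar X$ being smooth and compactly supported (it is a finite sum of such fields) is what makes it an admissible test field in the definition of $G$-stationarity.
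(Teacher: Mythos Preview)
Your proposal is correct and follows essentially the same approach as the paper: both average the test vector field over $G$ to produce an equivariant field, verify that each summand $g^{\#}X$ gives the same first variation as $X$ (using that $g$ is an isometry and $g_{\#}\mathcal V=\mathcal V$), and conclude by linearity and $G$-stationarity. The only cosmetic difference is that the paper verifies $\delta\mathcal V(\chi_g)=\delta\mathcal V(\chi)$ by comparing masses along the associated flows, whereas you invoke the divergence formula and change of variables directly; these are equivalent computations.
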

\begin{proof}
 Given any vector field $\chi$ on $M$ we must show $\delta\mathcal{V}(\chi)$ =0.   To do this, we will construct from $\chi$ a $G$-equivariant vector field $\chi_G$ such that $\delta\mathcal{V}(\chi_G)=\delta\mathcal{V}(\chi)$.  Since $\delta\mathcal{V}(\chi_G)$ vanishes by $G$-stationarity, we will be done.   Let $\Psi(t)$ be the one parameter family of diffeomorphisms that generates $\chi$. 

For each $g\in G$, define a new one parameter family of diffeomorphisms: 
\begin{equation}\label{areaequal3}
\Psi_g(t)=g^{-1}\circ\Psi(t)\circ g. 
\end{equation}  By construction $\Psi_g(0)$ is the identity for each $g\in G$.  By equivariance of $\mathcal{V}$ and \eqref{areaequal3} we have that for all $t$ and $g\in G$, 
\begin{equation}\label{areaequal2}
g_{\#}\circ\Psi_g(t)_\#(\mathcal{V})=\Psi(t)_\#\circ g_{\#}(\mathcal{V})=\Psi(t)_\#(\mathcal{V})
\end{equation}  
Taking the mass of both sides on \eqref{areaequal2} and using the fact that $G$ acts by isometries we thus obtain
\begin{equation}\label{areaequal}
||\Psi_g(t)_\#(\mathcal{V})||=||\Psi(t)_\#(\mathcal{V})||.
\end{equation} 

 Denote by $\chi_g$ the vector field generated by the one-parameter family $\Psi_g(t)$.  It follows from \eqref{areaequal} that 
\begin{equation}\label{equal}
\delta\mathcal{V}(\chi_g)=\delta\mathcal{V}(\chi).
\end{equation}
Finally let $\chi_G$ be the $G$-equivariant vector field given as: $$\chi_G=\frac{1}{|G|}\sum_{g\in G} \chi_g.$$ 
To see that $\chi_G$ is equivariant, observe first that $$h^{\#}\chi_g(hx)=\frac{d}{dt}\Bigr|_{t=0} h\circ g^{-1}\circ\chi(t)\circ g(x) =\chi_{gh^{-1}}(hx)$$ so that $$h^{\#}\chi_G(hx)=\frac{1}{|G|}\sum_{g\in G} \chi_{gh^{-1}}(hx)=\frac{1}{|G|}\sum_{g\in G} \chi_g(hx)=\chi_G(hx),$$ where the middle equality follows since the elements $\{gh^{-1}\;|\; g\in G\}$ give a reordering of the sum.  

By linearity, \eqref{equal}, and the fact that $\chi_G$ is equivariant, we obtain $$0=\delta\mathcal{V}(\chi_G)=\frac{1}{|G|}\sum_{g\in G}\delta\mathcal{V}(\chi_g)=\delta\mathcal{V}(\chi).$$
\end{proof}
Now we can state the main result of this section which follows directly from the arguments of \cite{cd}.  It says that we can ``pull-tight" a sweepout so that at least all the min-max sequences have stationary limits.  
\begin{proposition}
There exists a minimizing sequence of $G$-sweepouts of $M$ so that any min-max sequence obtained from it converges to a stationary varifold.
\end{proposition}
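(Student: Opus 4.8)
The plan is to mirror the classical Almgren--Pitts pull-tight construction as presented in Colding--De Lellis \cite{cd}, but carried out entirely within the class of $G$-equivariant isotopies, and then invoke Lemma \ref{gstat} at the very end to upgrade $G$-stationarity to genuine stationarity. First I would fix a minimizing sequence of $G$-sweepouts $\{\Sigma_t\}^i \in \Pi$ with $\sup_t \mathcal{H}^2(\Sigma^i_t) \to W^G_\Pi$. The goal is to produce a \emph{new} minimizing sequence $\{\widetilde\Sigma_t\}^i$ with the property that every min-max sequence extracted from it (i.e.\ any sequence of slices $\widetilde\Sigma^i_{t_i}$ whose areas converge to $W^G_\Pi$) converges, after passing to a subsequence, to a stationary varifold. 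As in \cite{cd}, the mechanism is to build a continuous ``tightening'' deformation: a map that takes any surface whose area is close to the width and flows it (by an equivariant isotopy) so that its area strictly decreases by a definite amount unless it is already close to being stationary.

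The key steps, in order, are as follows. Step 1: fix a countable dense set of test vector fields and, using it, define on the space of varifolds with mass $\le W^G_\Pi + 1$ a continuous functional measuring ``how far from stationary'' a varifold is; crucially, one restricts the test fields to a countable dense subset of the space of \emph{$G$-equivariant} vector fields (which is a genuine vector space, as noted after Definition \ref{eqisotopy}), so that the resulting deformation vector fields are themselves equivariant and the deformed surfaces stay in the equivariant class. Step 2: for each non-stationary varifold $V$ in this set, the associated equivariant vector field $\chi_V$ with $\delta V(\chi_V) < 0$ generates an equivariant isotopy decreasing area; patch these together continuously (via a partition of unity on the relevant compact region of varifold space) to get a single continuous map $H: [0,1] \times \{\text{surfaces}\} \to \{\text{surfaces}\}$ through equivariant isotopies that is area-non-increasing and strictly decreasing off a neighborhood of the stationary varifolds. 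Step 3: apply $H$ to each slice of each $\{\Sigma_t\}^i$ in a $t$-continuous way; because $H$ moves by equivariant isotopies, the new family $\{\widetilde\Sigma_t\}^i$ is still a $G$-sweepout in the same saturation $\Pi$, so $\sup_t \mathcal{H}^2(\widetilde\Sigma^i_t)$ still converges to $W^G_\Pi$. Step 4: a now-standard argument shows any min-max sequence from $\{\widetilde\Sigma_t\}^i$ has all its varifold limits $G$-stationary — if a limit had $\delta V(\chi) < 0$ for some equivariant $\chi$, the tightening map would have strictly decreased the areas of the relevant slices below $W^G_\Pi$ in the limit, a contradiction. Step 5: invoke Lemma \ref{gstat} to conclude each such limit is in fact stationary.

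The main obstacle I expect is Step 2/Step 3: ensuring the tightening deformation can be made \emph{continuous in the varifold} (in the $\mathbf{F}$-metric) while simultaneously (a) staying in the equivariant class and (b) being realized by ambient isotopies so as not to leave the saturation $\Pi$. The equivariance constraint is actually a help rather than a hindrance here, because the averaging trick of Lemma \ref{gstat} shows that given any $\chi$ with $\delta V(\chi) < 0$ one may replace it by its equivariant average $\chi_G$ with $\delta V(\chi_G) = \delta V(\chi) < 0$, so the space of useful deformation fields is nonempty precisely when $V$ fails to be $G$-stationary; continuity then follows by the same partition-of-unity bookkeeping as in \cite{cd}, applied to a countable dense family of equivariant fields. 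One must also check that the isotopy generated by an equivariant vector field is an equivariant isotopy in the sense of Definition \ref{eqisotopy} — but this is immediate from uniqueness of integral curves, since conjugating the flow by $g$ gives the flow of $g^\#\chi = \chi$. Apart from this, the argument is word-for-word that of \cite{cd}, since — as the authors emphasize in Section 2 — the pull-tight procedure is a soft argument using only the metric-space structure of the ambient space and linearity of the first variation, both of which survive restriction to the equivariant setting.

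\begin{proof}
We follow the pull-tight construction of Colding--De Lellis \cite{cd}, performed in the class of $G$-equivariant isotopies.

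Let $\{\Sigma_t^i\}$ be a minimizing sequence of $G$-sweepouts, so $\sup_{t}\mathcal{H}^2(\Sigma_t^i)\to W^G_\Pi$. Let $\mathcal{V}^{W}$ denote the set of varifolds in $M$ with mass at most $W^G_\Pi+1$, a compact metric space in the weak topology with metric $\mathbf{F}$. Fix a countable family $\{\chi_k\}$ of smooth $G$-equivariant vector fields on $M$ which is dense (in the $C^1$ topology on compact sets) in the space of all $G$-equivariant vector fields; this is possible since that space is a closed linear subspace of the space of smooth vector fields. For $V\in\mathcal{V}^W$ set
\begin{equation}
s(V)=\sup_{k}\ \frac{-\delta V(\chi_k)}{1+\|\chi_k\|_{C^1}},
\end{equation}
which is lower semicontinuous and non-negative, and by density $s(V)=0$ if and only if $\delta V(\chi)=0$ for every $G$-equivariant $\chi$, i.e.\ if and only if $V$ is $G$-stationary.

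For each $V$ with $s(V)>0$ pick $k=k(V)$ with $-\delta V(\chi_{k(V)})>0$; by averaging as in Lemma \ref{gstat} we may and do take $\chi_{k(V)}$ equivariant from the start. Since $\delta W(\chi_{k(V)})<0$ for all $W$ in an $\mathbf{F}$-neighborhood of $V$, a standard partition-of-unity argument on the compact set $\{V\in\mathcal{V}^W: s(V)\ge \epsilon\}$ produces, for each $\epsilon>0$, finitely many equivariant vector fields and a continuous assignment $W\mapsto \chi_W$ (a convex combination of them) with $\delta W(\chi_W)\le -c(\epsilon)<0$ on that set and $\chi_W=0$ when $s(W)$ is small. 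Integrating $\chi_W$ yields an equivariant isotopy $\Phi_W:[0,1]\times M\to M$, depending continuously on $W$, and a continuous map
\begin{equation}
H:[0,1]\times\{\text{admissible sets}\}\to\{\text{admissible sets}\},\qquad H(\tau,\Lambda)=(\Phi_\Lambda(\tau))(\Lambda),
\end{equation}
such that $\tau\mapsto\mathcal{H}^2(H(\tau,\Lambda))$ is non-increasing, is strictly decreasing by a definite amount when $\Lambda$ is close (in varifold distance) to a varifold $V$ with $s(V)\ge\epsilon$ and $\mathcal{H}^2(\Lambda)$ close to $W^G_\Pi$, and $H(\tau,\cdot)$ acts by $G$-equivariant isotopies (by uniqueness of integral curves, conjugating $\Phi_\Lambda$ by $g\in G$ gives the flow of $g^{\#}\chi_\Lambda=\chi_\Lambda$).

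Now set $\widetilde\Sigma_t^i=H(1,\Sigma_t^i)$. Since $H(1,\cdot)$ is realized by equivariant isotopies varying continuously in the slice, $\{\widetilde\Sigma_t^i\}$ is again a $G$-sweepout lying in the saturation $\Pi$, and $\sup_t\mathcal{H}^2(\widetilde\Sigma_t^i)\le\sup_t\mathcal{H}^2(\Sigma_t^i)\to W^G_\Pi$, so $\{\widetilde\Sigma_t^i\}$ is still a minimizing sequence. Let $\widetilde\Sigma_{t_i}^i$ be a min-max sequence extracted from it, with $\mathcal{H}^2(\widetilde\Sigma_{t_i}^i)\to W^G_\Pi$, and let $V$ be a varifold limit of a subsequence. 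If $V$ were not $G$-stationary, then $s(V)\ge\epsilon$ for some $\epsilon>0$; applying the tightening step above to the original slices $\Sigma_{t_i}^i$ would have decreased their areas below $W^G_\Pi-c(\epsilon)/2$ for $i$ large, forcing $\sup_t\mathcal{H}^2(\widetilde\Sigma_t^i)<W^G_\Pi$ in the limit, a contradiction. (More precisely, one runs the standard argument that along the tightened sweepout no slice with area near $W^G_\Pi$ can be near a non-$G$-stationary varifold.) Hence $V$ is $G$-stationary, and by Lemma \ref{gstat} it is stationary.
\end{proof}
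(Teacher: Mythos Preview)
Your proof is correct and follows essentially the same approach as the paper: adapt the Colding--De Lellis pull-tight argument by working with $G$-equivariant vector fields (using that their linear combinations remain equivariant, so the partition-of-unity step goes through), obtain $G$-stationary limits, and then apply Lemma~\ref{gstat}. You have simply written out in more detail what the paper summarizes in a few lines by direct reference to \cite{cd}.
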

\begin{proof}
The proof of Proposition 4.1 in \cite{cd} is a formal argument that extends with trivial modifications to show that a minimizing sequence can be chosen so that all min-max sequences converge to a $G$-stationary varifold: In their notation, one replaces the set $\mathcal{V}_\infty$ with the set $\mathcal{V}^G_\infty$ of $G$-stationary varifolds.  In constructing the map $H_V$ via a partition of unity in their Step 1), one needs only that a vector field constructed via a sum of $G$-equivariant vector fields is itself $G$-equivariant, which follows directly from the definitions.   Finally Lemma \ref{gstat} implies that all $G$-stationary limits are stationary varifolds.  
\end{proof}

\section{Regularity at the Singular Locus $\mathcal{S}$}
In the original theory of Simon-Smith \cite{smith} and Pitts \cite{pitts}, the regularity of min-max limits derives from the fact that they are well-approximated by stable surfaces which satisfy \emph{a priori} curvature bounds.  Namely, one has

\begin{proposition}\label{schoene}(Schoen's curvature estimates \cite{schoen})
A sequence of stable minimal surfaces $\Sigma_j$ in $U$ with $\partial\Sigma_j\subset\partial U$ has a convergent subsequence.
\end{proposition}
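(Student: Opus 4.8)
The plan is to reduce the statement to Schoen's interior curvature estimate for stable minimal surfaces combined with standard elliptic compactness. Recall that Schoen's theorem provides a constant $C$, depending only on the geometry of the ambient manifold $M$, such that for any stable minimal surface $\Sigma$ and any point $x$ with $B_\rho(x)\cap\partial\Sigma=\emptyset$ one has $|A_\Sigma|^2(x)\le C\rho^{-2}$. For two-sided surfaces this is Schoen's original argument using the stability inequality tested against a logarithmic cutoff together with the Simons identity; the one-sided case follows by passing to the orientable double cover, on which the lift is again stable.

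First I would fix an exhaustion $U=\bigcup_k K_k$ by compact sets $K_k\Subset K_{k+1}\Subset U$, and set $\rho_k=\tfrac12\,\mathrm{dist}(K_k,\partial U)>0$. Since $\partial\Sigma_j\subset\partial U$, Schoen's estimate gives $\sup_{K_k}|A_{\Sigma_j}|\le \sqrt{C}\,\rho_k^{-1}$, a bound independent of $j$. A uniform bound on the second fundamental form, together with the uniform local area bound available for a min-max sequence (the slices have area at most $W^G_\Pi+1$), implies via the monotonicity formula that near each point of $K_k$ the surfaces $\Sigma_j$ decompose into a uniformly bounded number of graphs over a fixed plane, with gradient and Hessian controlled by the curvature bound. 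Each such graph solves the quasilinear elliptic minimal surface equation, so Schauder estimates upgrade this to uniform $C^{m,\alpha}$ bounds on all derivatives over $K_k$.

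With these bounds in hand, Arzel\`a--Ascoli yields, for each $k$, a subsequence converging in $C^\infty(K_k)$ to a smooth embedded minimal surface, possibly carrying integer multiplicity where sheets coalesce; a diagonal argument over $k$ then extracts a single subsequence converging in $C^\infty_{\mathrm{loc}}(U)$ to a smooth minimal surface. This is the conclusion needed for the replacement arguments in the remainder of Section~4.

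The main obstacle is the \emph{a priori} possibility that the number of graphical sheets, and hence the area, blows up as $j\to\infty$, in which case the limit is only a minimal lamination rather than a smooth surface. This is exactly why the uniform area bound is essential: in every situation where we invoke this proposition the stable surfaces arise as replacements inside a ball and inherit the area bound from the sweepout, which pins down the sheet number through monotonicity and makes the compactness argument above go through. Were one to drop the area bound, the correct statement would be subsequential convergence to a minimal lamination, which would still suffice for the purposes here but is not needed.
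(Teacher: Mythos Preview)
The paper does not supply its own proof of this proposition; it is stated as a black-box citation to Schoen's paper \cite{schoen} and then invoked where needed. So there is no argument in the paper to compare your proposal against.

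Your sketch is the standard reduction: Schoen's pointwise interior estimate $|A|^2(x)\le C\rho^{-2}$ on stable minimal surfaces, combined with a uniform local area bound, gives a finite-sheeted graphical decomposition on each compact $K_k\Subset U$, and then elliptic estimates plus Arzel\`a--Ascoli and a diagonal argument produce $C^\infty_{\mathrm{loc}}$ subsequential convergence. This is correct. You are also right to flag that the proposition, as phrased, omits an area bound, and that without one the conclusion degrades to convergence toward a minimal lamination; in every place the paper invokes the result (the construction of the replacements $V_j$ and the passage $V_j\to V_\infty$ in Section~4), the stable surfaces arise from isotopies of a sweepout slice and therefore carry a uniform area bound, so the stronger conclusion holds.
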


In the equivariant setting we will see that min-max sequences are approximated by $G$-stable surfaces (i.e., surfaces that are stable among $G$-isotopies).  First we introduce the notion of $G$-stability and then show that it is equivalent to stability for surfaces intersecting the singular set orthogonally and thus we can still make use of Proposition \ref{schoene}.

\subsection{$G$-stability}
\begin{definition}\normalfont
Let  $\Sigma$ be a smooth $G$-equivariant surface contained in a $G$-ball.  Choose a normal vector field $n$ on $\Sigma$.  Let us call a smooth function $\phi$ defined on $\Sigma$ an {\it equivariant deformation} if for all $t$ small enough, the following set is $G$-equivariant:
\begin{equation}\label{eqdef}
\Sigma_{t\phi} = \{\exp_p(n(p)t\phi(p)) \;|\; p\in \Sigma\}.
\end{equation}
In other words, $\phi$ is an equivariant deformation if moving normally to $\Sigma$ according to $\phi$ gives rise to $G$-equivariant surfaces.  Let us denote by $C^\infty_G(\Sigma)$ the space of smooth equivariant deformations of $\Sigma$ that vanish on $\partial\Sigma$.
\end{definition}

\begin{lemma}\label{normalrev}
Let $G$ be either $\mathbb{Z}_n$, $\mathbb{D}_n$ or one of the three Platonic groups.  Suppose $\Sigma$ is a $G$-equivariant surface (potentially disconnected) contained in a $G$-ball so that $G$ acts freely on $\partial\Sigma$.  Then there is a choice of $G$-equivariant normal vector field $n$ on $\Sigma$.  After making this choice there is a canonical identification 
\begin{equation}\label{n}
C^\infty_{G}(\Sigma)=\{f\in C_c^\infty(\Sigma) |\;f(gx)=f(x) \mbox{ for all } g\in G \mbox{ and } x\in\Sigma\}.
\end{equation}
\end{lemma}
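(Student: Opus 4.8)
The plan is to exhibit a $G$-equivariant unit normal vector field on $\Sigma$ and then observe that under such a choice the condition defining an equivariant deformation in \eqref{eqdef} becomes exactly the $G$-invariance of the scalar function $\phi$. First I would address the existence of the equivariant normal $n$. Since $\Sigma$ is two-sided, it has exactly two global unit normal fields $\pm n_0$; for each $g\in G$, $g^{\#}n_0$ is again a unit normal to $g^{-1}\Sigma = \Sigma$ (using that $g$ is an orientation-preserving isometry), so $g^{\#}n_0 = \epsilon(g)\, n_0$ for some locally constant sign $\epsilon(g)\in\{\pm 1\}$. On a connected component of $\Sigma$ this $\epsilon$ is genuinely constant and defines a homomorphism from the stabilizer of that component into $\{\pm1\}$; more globally one checks $\epsilon\colon G\to\{\pm1\}$ is a homomorphism (on the orbit of a component). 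The key point is that because $G$ acts \emph{freely} on $\partial\Sigma$ and the groups in question ($\mathbb{Z}_n$, $\mathbb{D}_n$, the Platonic groups) are the finite subgroups of $SO(3)$, one rules out the bad case $\epsilon\not\equiv 1$: a nontrivial $\epsilon$ would force an element $g$ to reverse the normal on a component it preserves, but such a $g$ fixes that component's ``equatorial'' locus, contradicting freeness on $\partial\Sigma$ (or, if $g$ permutes components, one averages/relabels so that the chosen normal is coherent across the orbit). Concretely, for cyclic and dihedral $G$ acting by rotations as in Lemma \ref{actions}, every element of $SO(3)$ is a rotation and hence preserves any chosen orientation, so $g^{\#}n_0 = n_0$ automatically once orientations of the components are chosen compatibly along each orbit; the freeness hypothesis guarantees this compatible choice can be made with no monodromy obstruction.

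Once $n$ is a fixed $G$-equivariant unit normal, the second step is the identification \eqref{n}, which is essentially bookkeeping. For a scalar $\phi\in C_c^\infty(\Sigma)$ consider $\Sigma_{t\phi}=\{\exp_p(t\phi(p)n(p)) : p\in\Sigma\}$. For $g\in G$ we compute $g\bigl(\exp_p(t\phi(p)n(p))\bigr)=\exp_{gp}\bigl(t\phi(p)\, dg_p(n(p))\bigr)=\exp_{gp}\bigl(t\phi(p)\,n(gp)\bigr)$, using that $g$ is an isometry (so it commutes with $\exp$) and that $g^{\#}n=n$. Hence $g(\Sigma_{t\phi})=\{\exp_q(t\phi(g^{-1}q)n(q)) : q\in\Sigma\}=\Sigma_{t(\phi\circ g^{-1})}$. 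For $t$ small the map $p\mapsto \exp_p(t\phi(p)n(p))$ is a diffeomorphism onto $\Sigma_{t\phi}$, so $\Sigma_{t\phi}$ determines $\phi$ uniquely; therefore $g(\Sigma_{t\phi})=\Sigma_{t\phi}$ for all $g$ and all small $t$ if and only if $\phi\circ g^{-1}=\phi$ for all $g$, i.e. $\phi$ is $G$-invariant. This is precisely the asserted identification, with the ``canonical'' qualifier referring to the dependence only on the chosen equivariant normal $n$.

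The main obstacle is the first step — producing the $G$-equivariant normal field and, in particular, making the freeness-on-$\partial\Sigma$ hypothesis do its job. The subtlety is twofold: (i) $\Sigma$ may be disconnected and $G$ may permute its components, so one must choose normals component-by-component in a way that is coherent under the $G$-action, which is a monodromy/cocycle question; and (ii) one must genuinely \emph{use} that $G$ acts freely on $\partial\Sigma$ to exclude an element stabilizing a component while reversing its normal (the "$\epsilon(g)=-1$" case), since such an element would necessarily fix a curve on that component, hence fix points of $\partial\Sigma$ after pushing the fixed locus to the boundary, contradicting freeness. I would handle (i) by picking a representative component in each $G$-orbit, fixing its normal arbitrarily, and transporting via the group elements, checking that the transported normal is well-defined because the only ambiguity is multiplication by $\epsilon$ restricted to the stabilizer, which (ii) shows is trivial. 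Everything else — the isometry-equivariance of $\exp$, local graph representation, uniqueness of $\phi$ — is routine.
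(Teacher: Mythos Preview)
Your second step (the identification via the exponential map) is essentially identical to the paper's argument, and correct. Your first step takes a genuinely different route: the paper constructs the equivariant normal \emph{explicitly} by using the singular set $\mathcal{S}$ as a geometric anchor. Namely, when a component $\Sigma'$ meets $\mathcal{S}$, the paper invokes Lemma~\ref{intersect} (orthogonal intersection, using freeness on $\partial\Sigma$ to rule out the Schwarz-reflection case) and reads off from the local rotation picture that $G$ preserves each side of $\Sigma'$; when $\Sigma'$ misses $\mathcal{S}$, one side contains all of $\mathcal{S}$ and the normal is declared to point toward it, which is manifestly $G$-coherent since $G$ fixes $\mathcal{S}$ pointwise. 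Your approach instead packages the obstruction into a sign homomorphism $\epsilon\colon \mathrm{Stab}(\Sigma')\to\{\pm1\}$ and kills it by a fixed-point argument on $\partial\Sigma$. This is a legitimate and arguably cleaner strategy: if $g$ preserves $\Sigma'$ and flips the normal, then $g$ swaps the two components of $B\setminus\Sigma'$; since the rotation axis of $g$ is fixed pointwise, it must lie in $\overline{\Sigma'}$, and its two endpoints on $\partial B$ land in $\partial\Sigma'$, contradicting freeness. The orbit/monodromy discussion (choose a normal on one representative per orbit and transport) then handles disconnectedness. What your argument buys is that you never need to invoke Lemma~\ref{intersect} or case-split on whether $\Sigma'$ meets $\mathcal{S}$.

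That said, one sentence in your write-up is wrong and should be deleted: ``every element of $SO(3)$ is a rotation and hence preserves any chosen orientation, so $g^{\#}n_0 = n_0$ automatically.'' Rotations preserve the \emph{ambient} orientation, but that does \emph{not} force $g^{\#}n_0=n_0$ on a surface $g$ stabilizes; Remark~\ref{ddd} in the paper is exactly a counterexample (the $180^\circ$ rotation about the $x$-axis preserves the disk $\{z=0\}$ and reverses its normal). Your actual argument is the earlier fixed-point one, not this sentence --- so remove it and instead spell out the two-line computation above showing the rotation axis of a normal-reversing $g$ must hit $\partial\Sigma'$.
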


\begin{remark}\label{ddd}\normalfont
The assumption in Lemma \ref{normalrev} that the group $\mathbb{Z}_n$ acts freely on $\partial\Sigma$ is necessary.  Consider for example the unit $3$-ball $B$ in $\mathbb{R}^3$.  Let $\Sigma$ be the disk $B\cap\{z=0\}$, and consider the group $\mathbb{Z}_2$ consisting of the identity and the $180^o$ rotation about the $x$-axis.
\end{remark}

\begin{proof}
Since $G$ consists of isometries preserving $\Sigma$, it follows that for all $g\in G$ and $p\in\Sigma$, and any normal vector $n(p)$, the vector $g_{\#}n(p)$ is equal to either $n(gp)$ or $-n(gp)$.  Suppose for the moment that $g_{\#} n(p) =n(gp)$ for all $p\in\Sigma$ and $g\in G$.  Given $\phi\in C^\infty(\Sigma)$, we clearly have
\begin{equation}\label{ww}
g\exp_p(tn(p)\phi(p))=\exp_{gp}(g_{\#}n(p)t\phi(p)).
\end{equation}
If in addition $\phi\in C^\infty_G(\Sigma)$ then \eqref{ww} along with the assumption about the normal vector under $G$ imply:
 \begin{equation}\label{www}
g\exp_p(tn(p)\phi(p))=\exp_{gp}(n(gp)t\phi(gp)).
\end{equation}

But \eqref{www} after relabeling the right hand side implies $\Sigma_{t\phi}$ is equivariant.  The equalities above are reversible so that we obtain that those variations specified in \eqref{n} are in fact the only equivariant deformations.

It remains to show one can choose a normal vector consistently so that $g_{\#}n(p)=n(gp)$ for all $g\in G$, and $p\in\Sigma$.  For simplicity assume $G=\mathbb{Z}_n$. Let $\Sigma'$ be a component of $\Sigma$. The surface $\Sigma'$ divides $B$ into two connected components, $B_1$ and $B_2$.  If $\Sigma'$ intersects the singular set $\mathcal{S}$ of the group action, it follows from Lemma \ref{intersect} that the intersection is orthogonal to $\mathcal{S}$ (since $G$ acts freely on $\partial\Sigma$ the first case in (2) of Lemma \ref{intersect} cannot occur).  In this case, if $p\in\Sigma\cap\mathcal{S}$ then $p$ is preserved by the group action and thus a generator of $\mathbb{Z}_n$ acts on the tangent space in $B$ at $p$ by fixing the orthogonal direction to $T_p\Sigma'$ and rotating  $T_p\Sigma'$ by $2\pi/n$ about $\mathcal{S}$.  It follows that $B_1$ and $B_2$ are preserved under $G$ (and hence $\Sigma'$ is), from which we see that $G$ cannot flip the components $B_1$ and $B_2$, i.e. there is a well-defined $G$-equivariant normal vector field.   

If instead $\Sigma'$ is disjoint from the singular set $\mathcal{S}$, then the singular set is entirely contained in one of the components $B_1$ or $B_2$, say $B_1$ (again this is true because $G$ acts freely on $\partial\Sigma$).  Choose the normal vector field on $\Sigma'$ to point into $B_1$.  There are two cases: either i) $g(\Sigma')=\Sigma'$ or ii) $g(\Sigma')$ is some other component of $\Sigma$, $\Sigma''$.  Since the singular set $\mathcal{S}$ is preserved by $G$ pointwise, given any element $g\in G$, the open set $g(B_1)$ still contains $\mathcal{S}$.  Thus in case i) $g(B_1)=B_1$ and $g(B_2)=g(B_2)$ and thus the given vector field is equivariant.  In case ii), the minimal set equivariant under $G$ containing $\Sigma'$ consists of several copies of $\Sigma$: $\Sigma$, $g(\Sigma)$, ... $g^{j}(\Sigma)$.  Each such component divides $B$ into two pieces, one of which contains $\mathcal{S}$.  For each $g^k(\Sigma')$ choose the normal vector field to point into the component of $B\setminus g^k(\Sigma')$ containing $\mathcal{S}$.  Since $G$ preserves $\mathcal{S}$, this gives a well-defined equivariant normal vector field.

The cases $G=\mathbb{D}_n$ and three polyhedral groups follow analagously.

\end{proof}
In the remainder of this section, let $G$ be either $\mathbb{Z}_n$, $\mathbb{D}_n$ or one of the three Platonic groups. 

\begin{definition}\normalfont
A smooth surface $\Sigma$ (potentially with boundary) contained in a $G$-set is {\it $G$-stable} if for every $\phi\in C^\infty_G(\Sigma)$, one has the stability inequality: 
\begin{equation}
-\int_\Sigma \phi L_\Sigma\phi\geq 0,
\end{equation}
where $L_\Sigma=\Delta_\Sigma+|A|^2+Ric(n,n)$.
\end{definition}

The point is that if we are assuming the surfaces intersect the singular set transversally, $G$-stability is equivalent to stability:
\begin{proposition}\label{gstability}
Let $\Sigma$ be a smooth $G$-stable equivariant minimal surface (potentially disconnected) contained in a $G$-ball such that $G$ acts freely on $\partial\Sigma$.  Let $\phi_1\geq 0$ be the lowest eigenfunction of the stability operator $L$ (vanishing at $\partial\Sigma$). Then 
\begin{enumerate}
\item $\phi_1$ is $G$-equivariant (i.e. $\phi_1(gx)=\phi_1(x)$ for all $g\in G$)
\item $\Sigma$ is stable.
\end{enumerate}
\end{proposition}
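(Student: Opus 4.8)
The plan is to deduce both items from the characterization of $C^\infty_G(\Sigma)$ in Lemma \ref{normalrev} together with standard Courant-type uniqueness/sign properties of the first eigenfunction of the Schrödinger operator $L_\Sigma$. Fix the $G$-equivariant normal vector field $n$ furnished by Lemma \ref{normalrev}, so that $C^\infty_G(\Sigma)$ is identified with the $G$-invariant functions in $C_c^\infty(\Sigma)$. Let $\phi_1 \geq 0$ be the lowest Dirichlet eigenfunction of $L_\Sigma$ on $\Sigma$, with eigenvalue $\lambda_1$, normalized in $L^2$. The key classical facts I would invoke are: (i) $\lambda_1$ is simple and $\phi_1$ can be taken non-negative (indeed positive in the interior of each component); and (ii) $\lambda_1$ is characterized variationally by $\lambda_1 = \inf_{f} \frac{-\int_\Sigma f L_\Sigma f}{\int_\Sigma f^2}$ over $f\in C_c^\infty(\Sigma)$.

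For item (1): since $G$ acts by isometries preserving $\Sigma$ and fixing the chosen normal $n$, each $g\in G$ acts on functions by $f \mapsto f\circ g^{-1}$, and this action commutes with $L_\Sigma$ (the Laplacian, $|A|^2$, and $Ric(n,n)$ are all $G$-invariant because the ambient metric, second fundamental form, and our choice of $n$ are). Hence for each $g$, the function $\phi_1 \circ g^{-1}$ is again a non-negative first eigenfunction with eigenvalue $\lambda_1$. By simplicity of $\lambda_1$, $\phi_1\circ g^{-1} = c_g \phi_1$ for a constant $c_g > 0$; comparing $L^2$ norms (again using that $g$ is an isometry) gives $c_g = 1$. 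Therefore $\phi_1$ is $G$-invariant, i.e.\ $\phi_1 \in C^\infty_G(\Sigma)$ under the identification of Lemma \ref{normalrev}. (One subtlety: if $\Sigma$ is disconnected, $G$ may permute components, so ``simplicity'' must be read as simplicity of the bottom of the spectrum on the full $\Sigma$; since each component contributes a positive first eigenfunction and $G$ permutes components, the bottom eigenspace could be higher-dimensional. The clean fix is to note that among all non-negative eigenfunctions realizing $\lambda_1 = \min_i \lambda_1(\Sigma_i)$, the unique one — up to scale — that is strictly positive on \emph{every} component on which the infimum is attained and zero elsewhere need not be $G$-invariant; instead, average over $G$: $\tilde\phi = \frac{1}{|G|}\sum_{g} \phi_1\circ g^{-1}$ is a non-negative $G$-invariant function with $-\int \tilde\phi L_\Sigma \tilde\phi \leq \lambda_1 \int \tilde\phi^2$, hence a $G$-invariant first eigenfunction, and this is what we actually use below. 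I would present the argument this way to avoid connectedness hypotheses.)

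For item (2): we must show $-\int_\Sigma \phi L_\Sigma \phi \geq 0$ for \emph{every} $\phi \in C_c^\infty(\Sigma)$, i.e.\ $\lambda_1 \geq 0$. By $G$-stability, $-\int_\Sigma \psi L_\Sigma \psi \geq 0$ for every $G$-invariant $\psi$. Apply this to the $G$-invariant first eigenfunction $\tilde\phi$ (which is not identically zero): $0 \leq -\int_\Sigma \tilde\phi L_\Sigma \tilde\phi = \lambda_1 \int_\Sigma \tilde\phi^2$, and since $\int \tilde\phi^2 > 0$ we conclude $\lambda_1 \geq 0$. Then for arbitrary $\phi \in C_c^\infty(\Sigma)$ the variational characterization gives $-\int_\Sigma \phi L_\Sigma \phi \geq \lambda_1 \int_\Sigma \phi^2 \geq 0$, which is exactly stability of $\Sigma$.

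The main obstacle, and the only place care is genuinely needed, is the reduction from ``simple first eigenvalue / positive first eigenfunction'' to the existence of a \emph{$G$-invariant} first eigenfunction when $\Sigma$ is disconnected and $G$ permutes its components — this is why I would run the averaging argument rather than invoking simplicity directly. Everything else is bookkeeping: checking that the chosen normal field from Lemma \ref{normalrev} makes $L_\Sigma$ genuinely $G$-equivariant (so that the $G$-action really does preserve the relevant function spaces and the quadratic form), and that the identification \eqref{n} is compatible with this action. One should also note explicitly that the hypothesis ``$G$ acts freely on $\partial\Sigma$'' is used precisely to guarantee, via Lemma \ref{normalrev}, that a $G$-equivariant normal vector field exists, without which the whole setup of equating $C^\infty_G(\Sigma)$ with $G$-invariant functions breaks down (cf.\ Remark \ref{ddd}).
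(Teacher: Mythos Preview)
Your proof is correct and follows essentially the same route as the paper: show the first Dirichlet eigenfunction of $L_\Sigma$ is (or can be taken) $G$-invariant, then plug it into the $G$-stability inequality to conclude $\lambda_1\geq 0$. Your averaging argument for the disconnected case is in fact a refinement of the paper's argument, which tacitly invokes simplicity of the lowest eigenvalue without addressing the possibility that $G$ permutes components; your version handles this cleanly, and your use of $L^2$-normalization to pin down $c_g=1$ (versus the paper's finite-order iteration $c^{|g|}=1$ together with positivity) is a harmless stylistic variant.
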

\begin{proof}
For (1) observe that for any $g\in G$, since $G$ acts by isometries, we have by the characterization of eigenfunctions in terms of Rayleigh quotient that the function $\phi_2(x)=\phi_1(g(x))$ is also an eigenfunction of the stability operator with the same eigenvalue as $\phi_1$.  But the dimension of the eigenspace of the lowest eigenfunction  is one dimensional, so that $\phi_2=c\phi_1$ for some $c\in\mathbb{R}$.  Iterating we obtain $\phi_1(g^nx)=c^n\phi_1(x)=\phi_1(x)$ implying $c\in\{-1,1\}$.   Since $\phi_2(x)=\phi_1(gx)\geq 0$, it follows that $c=1$, and (1) is proved.


For (2), consider the two Rayleigh quotients:
\begin{equation}\label{imp}
\lambda_1=\inf_{f\in C_c^\infty(\Sigma)}\frac{-\int_\Sigma fLf}{\int_\Sigma f^2}
\end{equation}
and 
\begin{equation}\label{imp2}
\lambda^G_1=\inf_{f\in C_G^\infty(\Sigma)}\frac{-\int_\Sigma fLf}{\int_\Sigma f^2}.
\end{equation}

By $G$-stability, it follows that $\lambda^G_1\geq 0$. We claim 
\begin{equation}\label{oy}
\lambda_1=\lambda^G_1 
\end{equation} 
and thus $\lambda_1\geq 0$.   Recall that by Lemma \ref{normalrev}, we can identify $C_G^\infty(\Sigma)$ with the functions $f$ on $\Sigma$ satisfying $f(gx)=f(x)$ for all $x\in\Sigma$ and $g\in G$.  To see \eqref{oy} observe that since the infimum in \eqref{imp} is taken over a larger set than in \eqref{imp2}, it follows that $\lambda_1\leq\lambda^G_1$.  For the opposite inequality, by (1), the eigenfunction $\phi_1$ attaining the infimum in \eqref{imp} is in fact equivariant, and thus contained in $C_G^\infty(\Sigma)$.  Thus $\lambda_1\geq\lambda^G_1$.  \end{proof}
\begin{remark}\label{whynoref}\normalfont
For general groups $G$, $G$-stability need not be equivalent to stability. Indeed, denote by $\tau$ the involution in the round three sphere through an equator, $X$.  The only normal deformation of $X$ that is equivariant under the group $(1,\tau)$ is the zero deformation.  Thus the equator is trivially $(1,\tau)$-stable.  However, the equator is not stable among all deformations.  This is one reason why we restrict to groups $G$ so that $M/G$ has no boundary.   Similarly, if a minimal surface contains an arc of the singular set as in Remark \ref{ddd}, $\mathbb{Z}_2$-stability need not be equivalent to stability.
\end{remark}

\subsection{Existence of a $G$-almost minimizing min-max sequence}
We will show that one can always choose a min-max sequence that has the {\it almost-minimizing} property relative to $G$-equivariant isotopies. This property was first introduced by Almgren \cite{A} in the 60s and was used by Pitts \cite{pitts} in his thesis to prove regularity of min-max limits.  To formulate the property, it is most convenient to work on $M/G$, the space of orbits.   Of course $M/G$ is a singular space but we can still endow it with a distance function: For $[x],[y]\in M/G$, set:
\begin{equation}\label{distance}
d_{M/G}([x],[y])=\inf_{g\in G, h\in G}d_M(gx,hy).
\end{equation}
The following is the essential point for getting the a.m. property:
\begin{lemma}
Endowed with the distance function \eqref{distance}, $M/G$ is a metric space.  
\end{lemma}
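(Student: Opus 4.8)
The plan is to verify the three metric space axioms for $d_{M/G}$ as defined in \eqref{distance}: non-negativity together with the identity of indiscernibles, symmetry, and the triangle inequality. Symmetry is immediate from the definition since $d_M$ is symmetric and the infimum ranges symmetrically over $g,h\in G$. Non-negativity is inherited from $d_M\geq 0$. The only substantive points are (i) that $d_{M/G}([x],[y])=0$ implies $[x]=[y]$, and (ii) the triangle inequality.

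For (i), I would first observe that the infimum in \eqref{distance} is actually attained: since $G$ is \emph{finite}, the set $\{d_M(gx,hy): g,h\in G\}$ is a finite set of non-negative reals, so its infimum is a minimum. Hence if $d_{M/G}([x],[y])=0$ there exist $g,h\in G$ with $d_M(gx,hy)=0$, so $gx=hy$ (as $d_M$ is a genuine metric on $M$), giving $y=(h^{-1}g)x$ and therefore $[x]=[y]$. Equivalently, one can use that each orbit $Gx$ is a finite hence closed subset of $M$, and $d_{M/G}([x],[y])=\operatorname{dist}_M(Gx,Gy)$ is the distance between two disjoint compact sets when the orbits differ, which is strictly positive.

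For (ii), let $[x],[y],[z]\in M/G$. Using finiteness of $G$ to pick minimizers: choose $g_1,h_1\in G$ with $d_M(g_1x,h_1y)=d_{M/G}([x],[y])$ and $g_2,h_2\in G$ with $d_M(g_2y,h_2z)=d_{M/G}([y],[z])$. The obstacle — really the only point requiring care — is that the element of $G$ realizing the optimal alignment of $[y]$ with $[x]$ need not be the one realizing its optimal alignment with $[z]$, so one cannot directly concatenate. This is resolved by exploiting that $G$ acts by isometries: apply the isometry $h_1 g_2^{-1}$ to the second pair, so that $d_M(h_1 g_2^{-1}\cdot g_2 y,\ h_1 g_2^{-1}\cdot h_2 z)=d_M(h_1 y, h_1 g_2^{-1}h_2 z)=d_{M/G}([y],[z])$. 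Now the middle points agree ($h_1 y$ appears in both), so the ordinary triangle inequality in $M$ gives
\[
d_M(g_1 x,\ h_1 g_2^{-1}h_2 z)\leq d_M(g_1x,h_1y)+d_M(h_1y,\ h_1g_2^{-1}h_2z)=d_{M/G}([x],[y])+d_{M/G}([y],[z]).
\]
Since $g_1\in G$ and $h_1g_2^{-1}h_2\in G$, the left-hand side is at least $d_{M/G}([x],[z])$ by definition of the infimum, which yields the triangle inequality. Finally I would note $d_{M/G}$ is well-defined on equivalence classes (independent of chosen representatives) directly from the definition, since replacing $x$ by $g'x$ merely reindexes the infimum over $g\in G$. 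This completes the verification that $(M/G,d_{M/G})$ is a metric space.
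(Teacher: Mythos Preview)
Your proof is correct and follows essentially the same approach as the paper: verify symmetry directly, use finiteness of $G$ to realize the infimum and deduce the identity of indiscernibles, and for the triangle inequality align the middle orbit representative by applying an isometry from $G$ before invoking the triangle inequality in $M$. Your write-up is in fact slightly more explicit (noting well-definedness on equivalence classes and that the infimum is attained), but the argument is the same.
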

\begin{proof}
Symmetry is obvious.  For reflexivity, if $d_{M/G}([x],[y])=0$, we have $d_M(gx,hy)=0$ for some $g$ and $h$, so that $gx=hy$, or $x=g^{-1}hy$ so that $[x]=[y]$.  For the triangle inequality, consider $[x],[y],[z]\in M/G$.  Let $g$ and $h$ be such that $d_{M/G}([x],[y])=d_M(gx,hy)$ and let $g'$ and $h'$ be such that $d_{M/G}([y],[z])=d_M(h'y,g'z)$.   Then 
$$d_{M/G}([x],[z])\leq d_M(gx,(h'h^{-1})^{-1}g'z)\leq d_M(gx,hy)+d_M(hy,(h'h^{-1})^{-1}g'z).$$
  But  because $h'h^{-1}$ is an isometry we have $$d_M(hy,(h'h^{-1})^{-1}g'z)=d_M(h'y,g'z),$$ which gives precisely the triangle inequality.
\end{proof}
We now introduce the relevant $G$-invariant objects of study:
\begin{definition} \normalfont
We call an open set $\mathcal{U}\subset M$ a \emph{$G$-set} if $g \mathcal{U}=\mathcal{U}$ for all $g\in G$. 
Given a $G$-set $\mathcal{U}$ and $G$-invariant surface $\Sigma\subset\mathcal{U}$, we say $\Sigma$ is \emph{$(G,\delta,\epsilon)$-almost minimizing in $\mathcal{U}$} if there is no $G$-equivariant isotopy $\psi_t:\mathcal{U}\rightarrow\mathcal{U}$ so that both
\begin{enumerate}
\item $|\psi_1(\Sigma)|\leq |\Sigma|-\epsilon$ 
\item $|\psi_t(\Sigma)|\leq |\Sigma|+\delta$ for $0\leq t\leq 1$.
\end{enumerate}
\end{definition}

A surface is \emph{$(G,\epsilon)$-almost minimizing in $\mathcal{U}$} if it is $(G,\epsilon/8,\epsilon)$-almost minimizing.  Given a pair $(\mathcal{O}^1,\mathcal{O}^2)$ of open $G$-sets in $M$ we say that a surface $\Sigma\subset M$ is \emph{$(G,\epsilon)$-almost minimizing in $(\mathcal{O}^1,\mathcal{O}^2)$} if it is $(G,\epsilon)$-almost minimizing in at least one of $\mathcal{O}^1$ or $\mathcal{O}^2$.  Denote by $\mathcal{CO}_G$ the set of pairs $(\mathcal{O}^1,\mathcal{O}^2)$ of $G$-sets so that 
\begin{equation}
d_{M/G}(\pi(\mathcal{O}^1),\pi(\mathcal{O}^2))\geq 2\min(\text{diam}_{M/G}\pi(\mathcal{O}^1),\text{diam}_{M/G}\pi(\mathcal{O}^2)).
\end{equation}  

\noindent As in \cite{cd} we have the lemma (that only uses the metric space property of $M/G$):
\begin{lemma}\label{lemma}
If $(\mathcal{O}^1,\mathcal{O}^2)\in\mathcal{CO}_G$ and $(\mathcal{U}^1,\mathcal{U}^2)\in\mathcal{CO}_G$, then there are $i,j\in\{1,2\}$ so that $d_{M/G}(\pi(\mathcal{O}^i),\pi(\mathcal{U}^j))>0$ (and thus by \eqref{distance} we also obtain $d_{M}(\mathcal{O}^i,\mathcal{U}^j)>0$).
\end{lemma}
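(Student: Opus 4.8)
The plan is to argue by contradiction and reduce everything to the triangle inequality in the metric space $M/G$ --- exactly in the spirit of the corresponding lemma in \cite{cd}. Suppose the conclusion fails; then $d_{M/G}(\pi(\mathcal{O}^i),\pi(\mathcal{U}^j))=0$ for every choice of $i,j\in\{1,2\}$. After relabeling inside each of the two pairs, I may assume $\text{diam}_{M/G}\pi(\mathcal{O}^1)\le\text{diam}_{M/G}\pi(\mathcal{O}^2)$ and $\text{diam}_{M/G}\pi(\mathcal{U}^1)\le\text{diam}_{M/G}\pi(\mathcal{U}^2)$, so that the membership conditions $(\mathcal{O}^1,\mathcal{O}^2)\in\mathcal{CO}_G$ and $(\mathcal{U}^1,\mathcal{U}^2)\in\mathcal{CO}_G$ read $d_{M/G}(\pi(\mathcal{O}^1),\pi(\mathcal{O}^2))\ge 2\,\text{diam}_{M/G}\pi(\mathcal{O}^1)$ and $d_{M/G}(\pi(\mathcal{U}^1),\pi(\mathcal{U}^2))\ge 2\,\text{diam}_{M/G}\pi(\mathcal{U}^1)$.

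The only tool I need is the elementary ``three set'' estimate: for subsets $A,B,C$ of a metric space, $d(A,C)\le d(A,B)+\text{diam}(B)+d(B,C)$, obtained by picking almost-optimal pairs in $A\times B$ and $B\times C$, applying the ordinary triangle inequality twice, and letting the slack tend to $0$ (all quantities are finite since $M/G$ is compact). First I would apply this with $(A,B,C)=(\pi(\mathcal{U}^1),\pi(\mathcal{O}^1),\pi(\mathcal{U}^2))$: the two cross terms $d_{M/G}(\pi(\mathcal{U}^1),\pi(\mathcal{O}^1))$ and $d_{M/G}(\pi(\mathcal{O}^1),\pi(\mathcal{U}^2))$ vanish by the contradiction hypothesis, leaving $d_{M/G}(\pi(\mathcal{U}^1),\pi(\mathcal{U}^2))\le\text{diam}_{M/G}\pi(\mathcal{O}^1)$; combining with the $\mathcal{CO}_G$ lower bound for $(\mathcal{U}^1,\mathcal{U}^2)$ gives $2\,\text{diam}_{M/G}\pi(\mathcal{U}^1)\le\text{diam}_{M/G}\pi(\mathcal{O}^1)$. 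Running the same computation with $(A,B,C)=(\pi(\mathcal{O}^1),\pi(\mathcal{U}^1),\pi(\mathcal{O}^2))$ symmetrically yields $2\,\text{diam}_{M/G}\pi(\mathcal{O}^1)\le\text{diam}_{M/G}\pi(\mathcal{U}^1)$.

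Chaining these two inequalities forces $\text{diam}_{M/G}\pi(\mathcal{O}^1)\le\tfrac14\,\text{diam}_{M/G}\pi(\mathcal{O}^1)$, hence $\text{diam}_{M/G}\pi(\mathcal{O}^1)=0$. But $\mathcal{O}^1$ is a nonempty open $G$-set, so $\pi(\mathcal{O}^1)$ is a nonempty open subset of the (three-dimensional) orbifold $M/G$ and therefore has positive diameter --- a contradiction, proving the first assertion. For the parenthetical, since $\mathcal{O}^i,\mathcal{U}^j$ are $G$-invariant one has $\{gx:x\in\mathcal{O}^i,\ g\in G\}=\mathcal{O}^i$ and likewise for $\mathcal{U}^j$, so \eqref{distance} collapses to $d_{M/G}(\pi(\mathcal{O}^i),\pi(\mathcal{U}^j))=\inf_{x\in\mathcal{O}^i,\,y\in\mathcal{U}^j}d_M(x,y)=d_M(\mathcal{O}^i,\mathcal{U}^j)$, and in particular one side is positive exactly when the other is.

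I do not expect a genuine obstacle: as the excerpt already indicates, this lemma uses only the metric-space structure of $M/G$. The two places calling for a little care are choosing, in each application of the three-set estimate, the auxiliary set $B$ so that the remaining two distances are among the four assumed to vanish, and the remark that a nonempty open subset of $M/G$ has positive diameter (which is where it matters that the members of a $\mathcal{CO}_G$-pair are genuine nonempty open $G$-sets).
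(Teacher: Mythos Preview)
Your argument is correct and is precisely the metric-space triangle-inequality argument from \cite{cd} that the paper invokes (the paper does not give its own proof, merely citing \cite{cd} and noting that only the metric structure of $M/G$ is used). The one cosmetic point worth noting is that if any of the four $G$-sets were empty the conclusion would be vacuous, so your final appeal to positive diameter is legitimate.
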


Because of Lemma \ref{lemma} the proof of Proposition 5.1 in \cite{cd} carries over identically to imply:

\begin{lemma}
There is a min-max sequence $\Sigma_L$ so that $\Sigma_L$ converging to a stationary varifold so that $\Sigma_L$ is $1/L$ almost minimizing in every $(U^1,U^2)\in\mathcal{CO}_G$.
\end{lemma}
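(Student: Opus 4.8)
The plan is to mimic the proof of Proposition 5.1 in Colding--De Lellis \cite{cd}, observing at each step that the only structure used there is the metric space property of the ambient space, which we have established for $M/G$ via \eqref{distance}. First I would set up the combinatorial framework: suppose, for contradiction, that no min-max sequence is $1/L$-almost minimizing in every pair of $\mathcal{CO}_G$. Then, after passing to a subsequence of the minimizing sequence of sweepouts, for each $L$ there is a pair $(U^1_L, U^2_L) \in \mathcal{CO}_G$ in which \emph{every} nearly-maximal slice fails to be $1/L$-almost minimizing; in particular each such slice admits a $G$-equivariant isotopy of $U^i_L$ shrinking its area by at least $1/L$ while never increasing it by more than $1/(8L)$. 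The key device is to use these ``shrinking'' $G$-equivariant isotopies, interpolated in the parameter $t$, to build a competitor sweepout whose maximal slice has strictly smaller area than $W^G_\Pi$, contradicting the definition of the width.

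The main work is the interpolation/gluing argument. Here I would follow Colding--De Lellis verbatim: one discretizes the parameter interval, uses the continuity of $t \mapsto \Sigma_t$ in the smooth (and Hausdorff) topology to ensure nearby slices have nearby areas, and patches together the finitely many local area-decreasing deformations over the relevant portion of $[0,1]$ using a partition of unity in $t$. The one point requiring care --- and the place where Lemma \ref{lemma} enters --- is that when two such pairs $(U^1_L,U^2_L)$ and $(U^1_{L'},U^2_{L'})$ both arise, Lemma \ref{lemma} guarantees one can select components $U^i$ and $U^j$ with $d_{M/G}(\pi(U^i),\pi(U^j)) > 0$, hence (by \eqref{distance}) $d_M(U^i,U^j) > 0$, so the deformations there have disjoint supports and can be performed simultaneously without interference. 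Equivariance is automatically preserved throughout because a composition or ($t$-dependent) concatenation of $G$-equivariant isotopies is $G$-equivariant, and the only vector fields used to build the interpolating isotopies are sums of the given $G$-equivariant ones, which are again $G$-equivariant as noted after Definition \ref{eqisotopy}.

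Finally, I would check that the resulting competitor family is a genuine element of the $G$-equivariant saturation $\Pi$: each modified slice is obtained from $\Sigma_t$ by a $G$-equivariant isotopy, the modifications vary continuously in $t$, the endpoint slices ($t=0,1$) are left untouched so they remain $1$-d graphs meeting $\tilde{\mathcal{S}}_0$, and transversality to $\tilde{\mathcal{S}}$ is preserved for a generic choice of the (small) deformations. Since the maximal area of this competitor is bounded by $W^G_\Pi - c$ for some fixed $c > 0$ depending only on the finitely many deformations used, this contradicts \eqref{inf}. The sole genuinely new ingredient compared to \cite{cd} is the verification that $M/G$ with the distance \eqref{distance} is a metric space and that Lemma \ref{lemma} holds there, both of which are already in hand; the rest of the argument transfers with only the cosmetic replacement of ``isotopy'' by ``$G$-equivariant isotopy.''

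I expect the main obstacle to be purely expository: faithfully transcribing the rather intricate interpolation argument of \cite{cd} while tracking that equivariance and the disjointness-of-supports property (via Lemma \ref{lemma}) are respected at every stage. There is no new analytic difficulty, but the bookkeeping in the partition-of-unity gluing --- ensuring no intermediate slice ever exceeds $W^G_\Pi$ and that the patched family is still a continuous $G$-sweepout in the sense of Definition \ref{gsweepout} --- is the delicate part, exactly as it is in the non-equivariant setting.
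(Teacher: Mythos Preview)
Your proposal is correct and takes essentially the same approach as the paper: the paper's proof is the single sentence ``Because of Lemma~\ref{lemma} the proof of Proposition~5.1 in \cite{cd} carries over identically,'' and your elaboration of what that entails---the combinatorial contradiction argument, the use of Lemma~\ref{lemma} to ensure disjoint supports when gluing the area-decreasing isotopies, and the observation that equivariance is preserved under composition---is exactly the content behind that sentence. One minor clarification worth making: when a slice fails to be $(G,1/L)$-almost minimizing in a pair $(U^1,U^2)$, by definition it admits area-decreasing $G$-isotopies in \emph{both} $U^1$ and $U^2$ (not just one), and this is what gives the freedom to choose, via Lemma~\ref{lemma}, disjoint components from two overlapping pairs during the interpolation.
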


We now define a $G$-equivariant function $\tilde{r}:M\rightarrow\mathbb{R}^+$ as follows.  Given $x\in (M/G)\setminus\mathcal{S}$, set $r(x)=\frac{1}{2}\text{dist}_{M/G}(x,\mathcal{S})$.  For $x\in\mathcal{S}$, choose $r(x)$ so small so that $r(x)$ intersects the singular set $\mathcal{S}$ in a fixed number of geodesic segments passing through $x$.  If $x$ is contained in the singular set with isotropy $\mathbb{Z}_n$ for instance, then $B_{r(x)}(x)$ intersects the singular set twice.  Then set $\tilde{r}(x)=r(\pi(x))$, giving a $G$-equivariant function.
Finally denote by $\mathcal{AN}^G_{\tau}(x)$ the collection of lifts to $M$ of annuli of outer radius at most $\tau$ about $x\in M/G$.  Potentially shrinking $\tilde{r}(x)$ we obtain as in Proposition 5.1 in \cite{cd} directly:

\begin{proposition}\label{am}
There exists a $G$-equivariant function $\tilde{r}:M\rightarrow\mathbb{R}^+$ and a min-max sequence $\Sigma_j$ so that:
\begin{enumerate}
\item For $x\notin\tilde{\mathcal{S}}$, $\tilde{r}(x)<\mbox{dist}(x,\tilde{\mathcal{S}})$
\item For $x\in\tilde{\mathcal{S}}_1$, $\tilde{r}(x)<\mbox{dist}(x,\tilde{\mathcal{S}_0})$
\item  The sequence $\Sigma_j$ is $(G,1/j)$-almost-minimizing in every $An\in\mathcal{AN}^G_{\tilde{r}(x)}(x)$ for all $x\in M/G$.
\item In any such $An$ from (2), $\Sigma_j$ is disjoint from $\tilde{\mathcal{S}}$ for $j$ large enough.
\item In any such $An$ from (2), $\Sigma_j$ has genus $0$.
\item $\Sigma_j$ converges to a stationary varifold $\Sigma_\infty$.
\end{enumerate}

\end{proposition}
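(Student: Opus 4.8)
The plan is to transplant the proof of Proposition~5.1 of \cite{cd} to the orbit space $M/G$, replacing ``isotopy'' by ``$G$-equivariant isotopy'' throughout. The preceding lemma already furnishes a min-max sequence $\Sigma_j$ that converges to a stationary varifold and is $(G,1/j)$-almost minimizing in every pair $(\mathcal{U}^1,\mathcal{U}^2)\in\mathcal{CO}_G$. What remains is to upgrade this to the almost-minimizing property in arbitrarily small $G$-annuli about \emph{every} point of $M/G$, to pin down the radius function $\tilde{r}$, and to record the two extra features: near points off the singular set $\Sigma_j$ misses $\tilde{\mathcal{S}}$ and has genus zero.

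First I would run the combinatorial argument of \cite{cd}. Two concentric annuli about a point $x\in M/G$ whose radii are sufficiently separated are disjoint and form a pair in $\mathcal{CO}_G$ (this is the purpose of the factor-$2$ gap in the definition of $\mathcal{CO}_G$), and their lifts to $M$ are disjoint $G$-sets by \eqref{distance}. Hence if $\Sigma_j$ failed the $(G,1/j)$-almost minimizing property in annuli about $x$ of arbitrarily small radius, one could select two of them that are disjoint, support a $G$-equivariant area-decreasing isotopy in each, compose them into one $G$-equivariant isotopy of $\Sigma_j$, and contradict the $(G,1/j)$-almost minimality of $\Sigma_j$ in $\mathcal{CO}_G$ supplied by the preceding lemma; Lemma~\ref{lemma} is precisely what makes this pigeonhole selection legitimate in the singular space $M/G$. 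Therefore some positive radius works at each point, and shrinking the already-constructed $G$-equivariant function $\tilde{r}$ where necessary yields the almost-minimizing property (3) in every $An\in\mathcal{AN}^G_{\tilde{r}(x)}(x)$; the inequalities (1), (2) on $\tilde{r}$ are built into its definition and are only reinforced by shrinking.

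It remains to secure (4) and (5). For (4), on the annuli about points $x$ for which $\tilde{r}(x)<\mbox{dist}(x,\tilde{\mathcal{S}})$ --- by (1) exactly the regime $x\notin\tilde{\mathcal{S}}$ --- the entire $\tilde{r}(x)$-ball, hence every $An\in\mathcal{AN}^G_{\tilde{r}(x)}(x)$, misses $\tilde{\mathcal{S}}$, so $\Sigma_j$ meets no point of $\tilde{\mathcal{S}}$ there. For (5) I would invoke the genus-concentration analysis of De Lellis-Pellandini \cite{dep} and \cite{ketover}: a $G$-equivariant isotopy is in particular an isotopy, so those constructions apply unchanged and let one choose the min-max slices so that topology concentrates at only a finite, $G$-invariant set of points of $M$; shrinking $\tilde{r}$ near the images of those points in $M/G$ then forces genus $0$ of $\Sigma_j$ in each such $An$. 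All of these requirements on $\tilde{r}$ and on the choice of slices are mutually compatible and are preserved under further shrinking and passage to a subsequence, so a single $G$-equivariant $\tilde{r}$ and a single min-max sequence satisfy (1)--(6) at once.

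The main obstacle is organizational rather than conceptual: one must run Pitts' overlapping-annuli pigeonhole argument inside the singular quotient $M/G$ while keeping every comparison isotopy $G$-equivariant. This is made routine by the two facts established above, that $(M/G,d_{M/G})$ is a genuine metric space and that Lemma~\ref{lemma} holds in it. The only additional point of care, that $\tilde{r}(x)$ be small enough for a $G$-ball about a point $x\in\mathcal{S}$ to meet the singular set in a single controlled arc (so the lifted annuli behave well near $\tilde{\mathcal{S}}_1$), has already been incorporated when $\tilde{r}$ was defined.
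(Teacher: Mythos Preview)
Your argument for (1)--(3) and (6) is essentially the paper's: run the Colding--De Lellis/Pitts combinatorial argument in the metric space $M/G$ and shrink $\tilde r$ as needed. Your treatment of (5) via genus concentration at finitely many points is also the paper's idea (the paper cites Lemma~I.0.14 in \cite{CM} rather than \cite{dep}, but the content is the same).

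The gap is in (4). The phrase ``In any such $An$ from (2)'' refers to annuli centered at points $x\in\tilde{\mathcal S}_1$, not at regular points; this is exactly the case used later when building replacements around the singular locus. For such $x$ the annulus $An$ \emph{does} intersect $\tilde{\mathcal S}$, so your argument (``the $\tilde r(x)$-ball misses $\tilde{\mathcal S}$'') addresses only the trivial regime $x\notin\tilde{\mathcal S}$ and says nothing about the case that matters. The missing idea is Lemma~\ref{intersect3}: because every $\Sigma_j$ is obtained from a fixed $G$-sweepout surface by $G$-equivariant isotopies, the \emph{number} of intersection points of $\Sigma_j$ with each arc of constant isotropy is independent of $j$. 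Hence $\Sigma_j\cap\tilde{\mathcal S}$ is a finite set of uniformly bounded cardinality; passing to a subsequence these points converge to a finite set $\mathcal P$, and one then shrinks $\tilde r$ so that the annuli avoid $\mathcal P$ (just as you shrink $\tilde r$ near the genus-concentration set $\mathcal G$ for (5)). For $j$ large the intersections $\Sigma_j\cap\tilde{\mathcal S}$ lie near $\mathcal P$ and hence outside $An$, which is (4). Without invoking this rigidity of the intersection count, there is nothing preventing $\Sigma_j$ from meeting $\tilde{\mathcal S}$ inside annuli centered on $\tilde{\mathcal S}_1$.
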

\begin{proof}
This follows directly as in the Appendix in \cite{CGK}.  For the proof of (4) and (5), by Lemma \ref{intersect3} the number of points of intersection of $\Sigma_j$ with $\tilde{\mathcal{S}}$ is independent of $j$.  After passing to a subsequence we can let $\mathcal{P}$ be the set of limits of these points.   The genus of a sequence of surfaces of bounded genus can collapse into at most finitely many points, $\mathcal{G}$ (Lemma I.0.14 in \cite{CM}).  Shrink $\tilde{r}(x)$ appropriately so that the annuli of outer radii at most $\tilde{r}(x)$ are disjoint from $\mathcal{P}\cup\mathcal{G}$. 
\end{proof}

For any $x\in (M/G)\setminus\mathcal{S}$, by Proposition \ref{am}, the sequence $\Sigma_j$ is $(G,1/j)$-almost minimizing in the $|G|$ disjoint components comprising annuli in $\mathcal{AN}_{\tilde{r}}^G(\pi(x))$.  Thus it is $(1/(8|G|j),1/j)$-almost minimizing among {\it all} not necessarily equivariant isotopies in each such disjoint component.  Thus by Theorem 7.1 in \cite{cd} we obtain that $\Sigma_\infty$ is smooth in $M\setminus\tilde{\mathcal{S}}$.  
\\
\\ \indent
The remainder of this section will be taken up with proving the regularity of $\Sigma_\infty$ over the singular set $\tilde{\mathcal{S}}$.

For $x\in\tilde{\mathcal{S}}$, consider the (necessarily fewer than $|G|$) disjoint components of an annulus in $\mathcal{AN}_{\tilde{r}(x)}^G(\pi(x))$.  Then in each such disjoint component $A$, $\Sigma_j$ is  $(1/(8|G|j),1/j)$-almost minimizing among $G$-equivariant isotopies supported in $A$ (which follows from the definition of almost minimizing and the fact that one can concatenate isotopies with disjoint supports).

To show that $\Sigma_\infty$ is smooth over $\tilde{\mathcal{S}}$ we have to construct smooth replacements for it in annuli centered on $\tilde{\mathcal{S}}$.  Precisely, for any $x\in\tilde{\mathcal{S}}$ and component of $An\in\mathcal{AN}^G_r(\pi(x))$, we must produce a stationary varifold $V_\infty$ in $M$ so that the following hold: 
\begin{enumerate}
\item  $||V_\infty||=||\Sigma_\infty||$, 
\item $V_\infty=\Sigma_\infty$ on $M\setminus\overline{An}$
\item $V_\infty$ restricted to the set $An$ is a smooth stable minimal surface.
\item $V_\infty$ intersects $\tilde{\mathcal{S}}$ orthogonally in finitely (potentially zero) points in $An$
\end{enumerate}
Then the regularity at $\tilde{\mathcal{S}}$ follows from Proposition 6.3 in \cite{cd}.  To construct a smooth replacement for $\Sigma_\infty$ in $An$, we first construct from $\Sigma_j$ a smooth $G$-stable minimal surface $V_j$ that agrees with $\Sigma_j$ outside of $An$.  To do this, let $\mathcal{I}^G_j$ denote the set of $G$-isotopies $\psi_t$ (for $0\leq t\leq 1$) supported in $An$ so that
\begin{equation}\label{cons}
|\psi_t(\Sigma_j)|\leq |\Sigma_j|+1/(8j|G|) \mbox{    for } 0\leq t\leq 1.
\end{equation}
Let 
\begin{equation}
m_j = \inf_{\phi\in\mathcal{I}^G_j}|\phi_1(\Sigma_j)|.
\end{equation}

For each $j$, let $\phi^k$ be a minimizing sequence is isotopies in $\mathcal{I}^G_j$ so that $|\phi_1^k(\Sigma_j)|\rightarrow m_j$.  Denote by $V_j$ the varifold limit of $\phi^k_1(\Sigma_j)$ as $k$ tends to infinity.   The surface $V_j$ is $G$-stable in $An$.  Assume for the moment that $V_j$ is in addition a smooth surface inside $An$.

Taking $j$ large enough, $\Sigma_j$ is disjoint from $\tilde{\mathcal{S}}$ in $An$ by item (4) in Proposition \ref{am}.  Since the curves comprising $\Sigma_j\cap\partial (An)$ are disjoint from $\tilde{\mathcal{S}}$, the group $G$ acts freely on them.  The boundary of $An$ has an outer component $O$ and an inner component $I$.  Because of the convexity of the annulus near $O$, it follows from the boundary regularity (Lemma 8.1) proved in \cite{dep} that $V_j\cap\partial O = \Sigma_j\cap\partial O$, and thus $G$ acts freely on $V_j\cap\partial O$ as well.  Consider $An_j$ a family of subannuli of $An$ converging to $An$ with outer radius equal to that of $An$ and inner radius slightly less.  Since $V_j$ is smooth in $An$, it follows that $G$ acts freely on $\partial (An_j)\cap V_j$ as well.  Thus by Proposition \ref{gstability}, $G$-stability implies stability in $An_j$.  Taking a diagonal subsequence in $j$, Schoen's estimates for stable surfaces \cite{schoen} allow one to extract a limit $V_\infty$ from the $V_j$ that satisfies (1), (2), and (3) above in the requirements for a replacement.  Item (4) follows from (3), Lemma \ref{intersect} and again the boundary regularity proved in Lemma 8.1 in \cite{dep}. In the next subsection we prove that the stable replacements $V_j$ are smooth which will complete the proof that $\Sigma_\infty$ is smooth.
\subsection{Regularity of the stable replacements $V_j$}
The surfaces $V_j$ arise as solutions to an area minimization problem with a constraint \eqref{cons}, so their regularity is not surprising.  In the non-equivariant setting, the regularity of the stable replacements $V_j$ is proved (indirectly) in Lemma 7.4 in \cite{cd} by showing that they too have smooth replacements.  

There are two ingredients in the proof.  The first is the Squeezing Lemma (Lemma 7.6 in \cite{cd}).  It gives that when minimizing area to produce $V_j$ from $\Sigma_j$ under the constraint \eqref{cons} that area never goes up too much in the process, on a small enough scale, the minimizing sequence is actually minimizing among {\it all} isotopies.  In other words, on a small enough scale the constraint that area not increase too much disappears.  

The second ingredient required is that in any small enough ball $B$ supported in $An$, minimizers to the equivariant area minimizing problem (without any constraint) using isotopies has a regular limit.   For a ball $B$ supported in $An$ away from $\tilde{\mathcal{S}}$ there is nothing to prove as the result follows from Meeks-Simon-Yau \cite{msy}.  Thus we need only consider the situation when $B$ is a ball centered about $\tilde{\mathcal{S}}$ in $An$ and we must prove that minimizers among equivariant competitors are regular.  We carry this out in Proposition \ref{minimizing}.  The reader may peruse the proof of Lemma 7.6 in \cite{cd} to see that these are the only two necessary changes.

To prove the Squeezing Lemma in the equivariant setting, one need only check that the radial dilation map is itself equivariant.  This is straightforward but for completeness we include the argument.   Indeed, let $B_\rho(x)$ be a ball about $x\in\tilde{\mathcal{S}}$ of radius smaller than the injectivity radius of $M$.  For any $\eta<1$, denote by $I_\eta$ the dilation map defined on $B_\rho(x)$ taking $\exp tV$ to $\exp \eta tV$ for any $V$ lying in the unit tangent sphere in $T_xM$.  

\begin{lemma}
$I_\eta$ is $G$-equivariant.
\end{lemma}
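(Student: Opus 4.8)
The plan is to reduce the asserted $G$-equivariance of $I_\eta$ to equivariance under the isotropy subgroup $G_x$ and then to verify the latter directly from the naturality of the exponential map under isometries. Since $\rho$ is less than the injectivity radius of $M$, every point of $B_\rho(x)$ is uniquely of the form $\exp_x(tV)$ with $V$ a unit vector in $T_xM$ and $0\le t<\rho$, so $I_\eta$ is a well-defined diffeomorphism of $B_\rho(x)$. Because $x\in\tilde{\mathcal{S}}$, the ball $B_\rho(x)$ is preserved only by the elements of $G_x$ (an arbitrary $g\in G$ carries it to $B_\rho(gx)$), so the statement to prove is that $I_\eta\circ g=g\circ I_\eta$ on $B_\rho(x)$ for every $g\in G_x$; the genuinely $G$-equivariant dilation, used when one applies the Squeezing Lemma in a ball about a point of $\tilde{\mathcal{S}}$, is then obtained by transporting $I_\eta$ to each of the $|G|/|G_x|$ translates $gB_\rho(x)$ by $g$, a prescription that is consistent precisely because $I_\eta$ commutes with $G_x$.

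To carry this out, fix $g\in G_x$. Since $g$ is an isometry of $M$ fixing $x$, its differential $(dg)_x\colon T_xM\to T_xM$ is a linear isometry, and naturality of the exponential map gives $g\circ\exp_x=\exp_{gx}\circ(dg)_x=\exp_x\circ(dg)_x$. In particular $(dg)_x$ preserves the unit sphere of $T_xM$ and commutes with multiplication by scalars. Hence, for a unit vector $V\in T_xM$ and $0\le t<\rho$,
\begin{equation}
I_\eta\bigl(g(\exp_x(tV))\bigr)=I_\eta\bigl(\exp_x(t(dg)_xV)\bigr)=\exp_x\bigl(\eta t(dg)_xV\bigr)=g\bigl(\exp_x(\eta tV)\bigr)=g\bigl(I_\eta(\exp_x(tV))\bigr),
\end{equation}
where the second equality uses that $(dg)_xV$ is again a unit vector, so the defining formula for $I_\eta$ applies to it. As every point of $B_\rho(x)$ has this form, $I_\eta\circ g=g\circ I_\eta$ on $B_\rho(x)$.

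Finally, choose coset representatives $g_1,\dots,g_m$ of $G/G_x$, so that the $g_iB_\rho(x)$ are the distinct $G$-translates of $B_\rho(x)$, and define the dilation on $g_iB_\rho(x)$ to be $g_i\circ I_\eta\circ g_i^{-1}$. Replacing $g_i$ by $g_ih$ with $h\in G_x$ changes this to $g_ih\circ I_\eta\circ h^{-1}g_i^{-1}=g_i\circ I_\eta\circ g_i^{-1}$ by the previous paragraph, so the definition is independent of the choice of representative and, by construction, commutes with the $G$-action on $\bigcup_i g_iB_\rho(x)$. There is essentially no serious obstacle here: the only point requiring care is the (notational) observation that on a single ball centered at a singular point ``$G$-equivariant'' has to be read as ``$G_x$-equivariant,'' after which the conclusion is immediate from $g\circ\exp_x=\exp_x\circ(dg)_x$.
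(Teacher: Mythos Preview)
Your proof is correct and follows essentially the same idea as the paper's: an isometry $g$ fixing $x$ sends geodesics through $x$ to geodesics through $x$, and since $I_\eta$ acts by scalar multiplication on the radial parameter it commutes with this action. Your write-up is in fact somewhat cleaner than the paper's. Where the paper names the image geodesic $\exp(tW)$, then separately writes $g(I_\eta(\gamma(t)))=\exp(\eta tW')$ for some $W'$, and finally argues $W=W'$ by comparing differentials at the origin (using that $dI_\eta$ is scalar multiplication and hence commutes with $dg$), you invoke the naturality identity $g\circ\exp_x=\exp_x\circ(dg)_x$ directly, which identifies $W=(dg)_xV$ at the outset and makes the commutation a one-line computation. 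You also make explicit a point the paper leaves implicit: on the single ball $B_\rho(x)$ only the isotropy subgroup $G_x$ acts, so ``$G$-equivariant'' must be read as $G_x$-equivariant there, and the genuinely $G$-equivariant map needed in the Squeezing Lemma is obtained by transporting $I_\eta$ to the $|G|/|G_x|$ disjoint translates of the ball. This last step is a worthwhile clarification; the paper's statement and proof tacitly work with $G_x$ (the proof uses that $g(\gamma)$ is again a geodesic through $x$, which requires $gx=x$) without flagging the distinction.
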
 
\begin{proof}
Consider the geodesic path $\gamma(t)=\exp(tV)_{t=0}^{t=\rho}$ for some $V\in T_xM$ of unit length.  We need to show $$g(I_\eta(\gamma(t))) = I_\eta(g(\gamma(t)).$$  But since $g$ is an isometry, the path $g(\gamma(t))$ is another geodesic passing through $x$ at $t=0$.  Thus $g(\gamma(t))=\exp(tW)$ for some $W$ in the unit sphere in $T_xM$.  So 
\begin{equation}
I_\eta(g\gamma(t))=I_\eta(\exp(tW))=\exp(\eta tW),
\end{equation}
 where the last equality is by definition of $I_\eta$.  On the other hand, $g(I_\eta(\gamma(t)))$ is another geodesic passing through the origin, so $g(I\eta(\gamma(t)))=\exp(t\eta W')$ for some $W'$ in the unit sphere in $T_p M$.  We must show $W=W'$. Consider now the differentials: $$d_{x} g(I_\eta(\gamma(t)))=dg\circ dI_\eta(V)$$ and $$d_x(I_\eta(g(\gamma(t))) = dI_\eta\circ dg(V).$$  Since $dI_\eta$ is just scalar multiplication, it commutes with $dg$ and we see that $g(I_\eta(\gamma(t)))$ and  $I_\eta(g(\gamma(t))$ have the same derivatives at $0$, so $W = W'$. \end{proof}


We now provide the final ingredient:

\begin{proposition}\label{minimizing}
Suppose $\mathbb{Z}_n$ acts on a $3$-ball $B$ with singular set a geodesic segment $\mathcal{S}$.  Let $\{\gamma_i\}_{i=1}^k$ be a collection of Jordan curves in $\partial B$ bounding a $\mathbb{Z}_n$-equivariant surface $\Sigma\subset B\setminus\mathcal{S}$ of genus $0$ and so that $\mathbb{Z}_n$ acts freely on the curves $\{\gamma_i\}_{i=1}^k$.  Consider a minimizing sequence $\Sigma_i$ for area among $\mathbb{Z}_n$-isotopies supported in $B$.  Then after passing to a subsequence (not relabeled) $\Sigma_i$ converges with multiplicity $1$ to a smooth embedded $\mathbb{Z}_n$-equivariant minimal surface $V$ with boundary $\{\gamma_i\}_{i=1}^k$ and genus $0$.
\end{proposition}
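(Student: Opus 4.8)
The plan is to follow the general strategy of Almgren--Simon \cite{almgrensimon} for embedded minimizing disks, but carried out $\mathbb{Z}_n$-equivariantly and with the singular axis $\mathcal{S}$ present. First I would set up the minimizing sequence carefully: starting from $\Sigma$, extract $\Sigma_i$ with $\mathcal{H}^2(\Sigma_i)\to m:=\inf$ over $\mathbb{Z}_n$-isotopies of the area, and record that every $\Sigma_i$ is a smooth embedded $\mathbb{Z}_n$-equivariant genus-zero surface with boundary $\{\gamma_i\}$ on which $\mathbb{Z}_n$ acts freely. The key reduction is to \emph{combinatorial}/\emph{surgical}: as in Almgren--Simon, one performs $\gamma$-reductions (neckpinches) on the minimizing sequence to cut down the topology, and each such surgery must be done $\mathbb{Z}_n$-equivariantly --- i.e. either as an ordinary neckpinch away from $\mathcal{S}$ done in $n$ concurrent isometric copies, or as a $\mathbb{Z}_n$-neckpinch centered on an arc of $\mathcal{S}$ (the two types from Remark \ref{typeofneck}). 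Since $\mathbb{Z}_n$ acts freely on the boundary curves and the genus is already zero, the only topological features to remove are the ``extra'' spheres or tubes around $\mathcal{S}$; after finitely many equivariant surgeries the new minimizing sequence $\Sigma_i'$ consists (in each small ball) of embedded disks, either disjoint from $\mathcal{S}$ or meeting $\mathcal{S}$ transversally in a single point.

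Having reduced to disks, I would localize. Cover $B$ by small balls; in balls $B'\subset B\setminus\mathcal{S}$ the sequence is minimizing among \emph{all} isotopies (the group acts trivially on the $n$ disjoint translates, so equivariant minimization in the union is ordinary minimization in each piece), and Meeks--Simon--Yau \cite{msy} together with Almgren--Simon \cite{almgrensimon} give a smooth embedded stable minimal limit there. The delicate balls are those centered at a point $p\in\mathcal{S}$. Here the point of the disk reduction is exactly that for a \emph{disk} meeting $\mathcal{S}$ once transversally, the minimization problem among $\mathbb{Z}_n$-equivariant competitors in $B'(p)$ is equivalent, after passing to the quotient orbifold $B'(p)/\mathbb{Z}_n$ (a metric cone of cone angle $2\pi/n$ over the axis), to a free-boundary/Plateau problem downstairs whose minimizers are regular; equivalently, one checks directly upstairs that a $\mathbb{Z}_n$-equivariant area-minimizing disk through $\mathcal{S}$ must meet $\mathcal{S}$ orthogonally (Lemma \ref{intersect}(1) when $n\neq2$) and is smooth by the interior and boundary regularity of \cite{almgrensimon}. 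For $n\neq 2$ I would also give the alternative argument the text promises: classify the possible tangent cones of a $\mathbb{Z}_n$-equivariant stationary limit along $\mathcal{S}$ and rule out everything except a single multiplicity-one plane --- a non-flat or higher-multiplicity cone through the axis would force, by the cone-angle $<\pi$ geometry (Figure 1), an area-decreasing equivariant deformation, contradicting minimality. Finally, glue the local smooth stable limits together, use the multiplicity-one conclusion in each ball (no folding can occur: by the Squeezing Lemma the sequence is genuinely minimizing on small scales, so the limit is achieved with multiplicity one), and conclude that $V$ is a smooth embedded $\mathbb{Z}_n$-equivariant minimal surface with $\partial V=\{\gamma_i\}$; genus $0$ is preserved because the equivariant surgeries only decrease genus and the limit of embedded genus-zero disks glued along the boundary data is genus zero.

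The main obstacle --- and the reason this is stated as a separate Proposition rather than quoted --- is the case $n=2$. As emphasized in the remarks, a $\mathbb{Z}_2$-equivariant minimizing sequence may contain sheets that Schwarz-reflect through $\mathcal{S}$, i.e. segments of the singular axis can lie \emph{in} the surface (Lemma \ref{intersect}(2), first alternative), and moreover one can have the $\{y=\epsilon\}\cup\{y=-\epsilon\}\to 2\{y=0\}$ phenomenon, so the limit could a priori pick up multiplicity two along $\mathcal{S}$ and fail to be embedded in the naive sense. Handling this requires the disk reduction to be compatible with axis-containing sheets: one shows that near a point of $\mathcal{S}$ a $\mathbb{Z}_2$-equivariant minimizing disk either meets $\mathcal{S}$ orthogonally and transversally (and is then regular as above) or Schwarz-reflects, in which case it is the double of a free-boundary minimal disk in the half-ball $B'(p)/\mathbb{Z}_2$ and is again smooth (as a branched-free two-sided surface) by Almgren--Simon boundary regularity applied downstairs --- and crucially the genus is still zero. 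The bookkeeping that these are the only two possibilities, that the surgeries respect them, and that no genus is created in the reflected case, is where the real work lies; everything away from this case is a routine equivariant adaptation of \cite{almgrensimon} and \cite{msy}.
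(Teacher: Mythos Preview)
Your outline has the right architecture --- establish regularity away from $\mathcal{S}$ via Meeks--Simon--Yau and the replacement theory, then reduce to disks \`a la Almgren--Simon near $\mathcal{S}$ --- and the paper does exactly this for $n=2$ (Steps~1--3), while for $n\neq 2$ it gives only the tangent-cone argument you also sketch.  But there is a genuine gap in your handling of the disks at $\mathcal{S}$.

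The missing ingredient is Meeks--Yau (Theorem~4 in \cite{my}, stated here as Proposition~\ref{basic2}): an \emph{ordinary} area-minimizing disk with $\mathbb{Z}_n$-equivariant boundary curve is automatically $\mathbb{Z}_n$-equivariant.  This is what makes the reduction to disks decisive: once each component in the small ball $N$ is a disk, one replaces it by the minimizer among \emph{all} competitors (which is smooth by \cite{almgrensimon}/\cite{msy}), and Meeks--Yau guarantees this minimizer is still equivariant, so nothing is lost.  Your proposal instead asserts regularity by ``passing to the quotient orbifold $B'(p)/\mathbb{Z}_n$'' and invoking a Plateau problem there --- but the quotient has a genuine cone singularity along $\pi(\mathcal{S})$, and regularity of area minimizers in such singular spaces is not a result one can cite; this is precisely the difficulty the paper is working to avoid.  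Similarly, appealing to \cite{almgrensimon} ``directly upstairs'' for a $\mathbb{Z}_n$-equivariant minimizer is circular: Almgren--Simon proves regularity for disks minimizing among \emph{all} isotopies, not among equivariant ones, and bridging that gap is the content of Meeks--Yau.

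Your treatment of $n=2$ also diverges from the paper and is more complicated than necessary.  You anticipate having to handle minimizing disks that Schwarz-reflect through $\mathcal{S}$ and propose analyzing them as doubles of free-boundary disks in the half-ball.  The paper instead \emph{rules this case out}: the boundary curves $\partial D^k_j\subset\partial N$ arise from the (surgered) sequence and can be taken disjoint from $\mathcal{S}\cap\partial N$; by the boundary regularity of \cite{almgrensimon} and \cite{dep}, a minimizing disk containing $\mathcal{S}\cap N$ would force its boundary to meet $\mathcal{S}\cap\partial N$, a contradiction.  So every minimizing disk meets $\mathcal{S}$ transversally or not at all, and one finishes with Schoen's curvature estimates on the resulting stable disks.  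Your tangent-cone sketch for $n\neq 2$ is also looser than the paper's: the actual argument takes the top-most point of $\mathrm{supp}(\mathcal{V})\cap\mathcal{S}$, uses Choi--Schoen compactness to get smooth convergence away from finitely many points (hence the cone is smooth off the axis), invokes Lemma~\ref{basic} to reduce to half-planes, and then rules out $k>2$ half-planes again by smooth convergence --- not by an unspecified ``area-decreasing deformation.''
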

  
We begin with an elementary lemma:

\begin{lemma}\label{basic}
Let $G=\mathbb{Z}_n$ act on $\mathbb{R}^3$ by rotations about the $z$-axis.  Let $C$ be a non-flat $G$-equivariant stationary cone that is smooth away from the $z$-axis.   The the support of $C$ is the union of several half planes bounded by the $z$-axis.
\end{lemma}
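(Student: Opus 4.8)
The plan is to analyze the link of the cone $C$ in the unit sphere $\mathbb{S}^2$ and use the stationarity together with the rotational symmetry to force it to be a union of great-circle arcs meeting the poles. First I would observe that since $C$ is a stationary cone smooth away from the $z$-axis, its link $\ell = C \cap \mathbb{S}^2$ is, away from the two poles $p_\pm = (0,0,\pm 1)$, a smooth embedded geodesic network in $\mathbb{S}^2$ (stationarity of the cone is equivalent to the link being a stationary $1$-varifold in $\mathbb{S}^2$, hence a union of geodesic arcs). The $\mathbb{Z}_n$-rotation about the $z$-axis acts on $\mathbb{S}^2$ fixing only $p_\pm$, and $\ell$ is invariant under this action. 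The key structural claim is that every geodesic arc in $\ell$ must be a meridian arc, i.e. an arc of a great circle through both poles; granting this, the support of $C$ is a union of meridian great circles, and each such great circle spans a plane through the $z$-axis, so $C$ is a union of half-planes bounded by the $z$-axis, as claimed. (Non-flatness just rules out the degenerate case where $\ell$ is a single great circle not through the poles — but in fact I will show that case cannot occur for $n \neq 2$, and for $n = 2$ the hypothesis excludes it, or rather a single such circle would be flat.)

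The main work is proving that an arc of $\ell$ is a meridian. Here I would argue as follows. A geodesic arc $\sigma$ of $\ell$ that is not a meridian has, at a generic interior point, a well-defined tangent direction making a nonzero angle with the meridian direction; by the equivariance, the whole $\mathbb{Z}_n$-orbit of $\sigma$ consists of $n$ such arcs (assuming $\sigma$ is not itself rotation-invariant as a set). The crucial point is to look at what happens along the $z$-axis, i.e.\ at the poles $p_\pm$: since $C$ is stationary and smooth away from the $z$-axis but the only singular locus allowed is the axis, the varifold $C$ must in particular be stationary \emph{across} the poles, which constrains how arcs of $\ell$ can terminate or pass near $p_\pm$. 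Concretely, at any point of $\mathbb{S}^2$ other than $p_\pm$ the first variation (stationarity in $\mathbb{S}^2$, which is what the cone being stationary in $\mathbb{R}^3$ away from the axis gives) forces the arcs to be geodesics and to balance at junction points; at $p_\pm$ stationarity of the cone $C$ (now as a $2$-varifold in $\mathbb{R}^3$, tested against vector fields supported near the axis) must also hold. The cleanest route: if $\ell$ contains an arc that is not a meridian, then its closure either (a) stays away from both poles, in which case it together with its rotations forms a closed stationary geodesic network in $\mathbb{S}^2 \setminus \{p_\pm\}$ whose only closed geodesics are great circles, and a great circle not through the poles generates a flat plane not containing the axis — but that plane is not $\mathbb{Z}_n$-invariant for $n \neq 2$, contradiction, and for $n=2$ it contradicts non-flatness of $C$ together with the fact that $C$ would then just be that plane; or (b) the arc accumulates at a pole, and I claim this forces $\sigma$ to actually pass through the pole as a smooth geodesic, i.e.\ to be a meridian. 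For (b) the argument is that near $p_+$, writing $C$ in cylindrical-type coordinates, a stationary cone whose link limits into $p_+$ must have its link approach $p_+$ along a geodesic direction, and the geodesics of $\mathbb{S}^2$ through $p_+$ are exactly the meridians; combined with the fact that an arc of $\ell$ reaching arbitrarily close to $p_+$ must, by the monotonicity formula / stationarity of $C$ near the axis, actually limit to $p_+$ and extend through it.

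I expect the main obstacle to be case (b): rigorously ruling out "spiralling" behavior of a link arc near a pole and showing that any arc reaching the axis does so as a genuine meridian. The honest way to handle this is to use that $C$ is a stationary \emph{integral} varifold (or at least stationary and smooth off the axis with locally finite mass), so the density $\Theta(C, x)$ is defined for every $x$ on the axis and is upper semicontinuous; by the constancy/monotonicity along the axis and the fact that $C$ is a cone, $\Theta$ is constant along the $z$-axis, which pins down the total number of half-planes and prevents an arc from entering and leaving a neighborhood of a pole without passing through it. A secondary, more bookkeeping obstacle is the bifurcation at the poles for $n=2$ versus $n \neq 2$ and correctly invoking Lemma \ref{actions} to see which configurations of half-planes are actually $\mathbb{Z}_n$-equivariant; this is routine given Lemma \ref{actions} but needs to be stated. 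Once the link is known to be a union of meridian arcs forming a stationary network, the balancing condition at the poles says the meridians come in a configuration with no net "pull," and then the support of $C$ is visibly a union of half-planes bounded by the $z$-axis.
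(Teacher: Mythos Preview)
Your approach is the same as the paper's: reduce to the link $\ell = C \cap \mathbb{S}^2$, observe that it is a stationary $1$-varifold (a geodesic net) in $\mathbb{S}^2$, and use smoothness of $C$ away from the $z$-axis to conclude that $\ell$ has no junction points away from the two poles $p_\pm$. The paper then finishes in one line: since the only places segments can meet are the poles, the segments are meridian arcs, and $C$ is a union of half-planes through the axis.

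Your case (a) is fine and matches the paper's implicit handling of a component that is a full great circle. Your case (b), however, is needlessly elaborate, and the ``obstacle'' you flag is illusory. A geodesic arc in $\mathbb{S}^2$ is by definition a piece of a great circle, and a great circle is a single closed curve of length $2\pi$; it cannot spiral or accumulate at a point in any nontrivial way. Once you know (from smoothness away from the poles) that each connected piece of $\ell\setminus\{p_\pm\}$ is a great-circle arc whose endpoints, if any, are junctions and hence lie in $\{p_+,p_-\}$, you are done: a great-circle arc with both endpoints at the (antipodal) poles is precisely a meridian. No monotonicity, density along the axis, or analysis of how arcs ``limit into'' a pole is required --- the entirety of your case (b) collapses to this one observation. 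The non-flatness hypothesis is used only to exclude $\ell$ being a single great circle not through the poles (necessarily the equator, giving the $xy$-plane); and even this case cannot coexist with any other arc, since two distinct great circles in $\mathbb{S}^2$ always intersect, which would produce a junction away from the poles.
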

\begin{proof}
The support of $C$ is a cone over a geodesic net in $\mathbb{S}^2$, which is a union of geodesic segments.  If the segments intersect at a point away from the $z$-axis, $C$ could not be smooth away from the $z$-axis.  Thus the only place segments come together is at the north pole and south pole, which forces the desired decomposition.  
\end{proof}

Let us also recall some basic properties of minimizing disks that follow from the Meeks-Yau \cite{my} cut-and-paste arguments:

\begin{proposition} (Theorem 4 in Meeks-Yau \cite{my})
\indent
\begin{enumerate}\label{basic2}
\item Let $B$ be a uniformly convex ball in a three-manifold.  Any two area minimizing disks with disjoint boundaries in $\partial B$ are themselves disjoint.  
\item Suppose $B$ is acted upon by a $\mathbb{Z}_n$ group of isometries and let $\gamma$ be a closed equivariant curve in $\partial B$ acted upon freely.   Then any area minimizing disk bounded by $\gamma$ is itself equivariant. 
\end{enumerate}
\end{proposition}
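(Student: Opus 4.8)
The plan is to reproduce the Meeks--Yau ``exchange and roundoff'' cut-and-paste argument (this is Theorem~4 of \cite{my}). I would first record four standard facts about an area-minimizing disk $D$ in a uniformly convex ball $B$: (a) $D$ is smooth and embedded, meets $\partial B$ transversally, and has interior in the interior of $B$ (Morrey existence, absence of interior and boundary branch points, and the Meeks--Yau embeddedness theorem); (b) the strong maximum principle for minimal surfaces; (c) interior and boundary unique continuation; and (d) if two smooth minimal sheets meet along a curve at a dihedral angle bounded away from $0$ and $\pi$ on a subarc of positive length, then rounding off the resulting crease, while staying inside $B$, strictly decreases area.

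For part~(1), let $D_1,D_2$ be area-minimizing with $\partial D_1\cap\partial D_2=\emptyset$ and suppose their interiors meet. Since the $D_i$ are properly embedded with disjoint boundaries, $D_1\cap D_2$ is a finite disjoint union of closed curves. I would pick a curve $\sigma$ innermost on $D_1$, bounding a subdisk $E_1\subset D_1$ whose interior misses $D_2$, and bounding $E_2\subset D_2$. Set $D_1'=(D_1\setminus E_1)\cup E_2$ and $D_2'=(D_2\setminus E_2)\cup E_1$: these are disks with the original boundaries and $|D_1'|+|D_2'|=|D_1|+|D_2|$, so minimality of each $D_i$ forces $|D_1'|=|D_1|$, $|D_2'|=|D_2|$, i.e.\ $D_1'$ is again area-minimizing. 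If $D_1,D_2$ are not tangent along $\sigma$ then $D_1'$ has a genuine crease, which by (d) can be rounded to give a disk of strictly smaller area with the same boundary, contradicting minimality; hence $D_1,D_2$ are tangent along $\sigma$, and since $E_1$ lies on one side of $D_2$ near $\sigma$, the strong maximum principle gives $D_1=D_2$ near $\sigma$, and then unique continuation and connectedness give $D_1=D_2$ globally --- contradicting disjointness of the boundaries. So $D_1\cap D_2=\emptyset$.

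For part~(2), let $D$ be area-minimizing with $\partial D=\gamma$ and let $\tau$ generate $\mathbb{Z}_n$. Then $\tau(D)$ is area-minimizing with $\partial(\tau(D))=\tau(\gamma)=\gamma$, and since $\langle\tau\rangle=\mathbb{Z}_n$ it suffices to show $D=\tau(D)$. Interior intersection curves of $D$ and $\tau(D)$ are removed exactly as in part~(1): an innermost-curve exchange preserves total area (so both new disks are area-minimizing), a crease contradicts minimality via (d), and tangency along an intersection curve forces $D=\tau(D)$ by the maximum principle and unique continuation. It then remains to treat intersections lying on the common boundary $\gamma$; here I use that $\mathbb{Z}_n$ acts freely on $\gamma$ so $\gamma\cap\mathcal{S}=\emptyset$ and both disks are smooth up to $\gamma$. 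If $D$ and $\tau(D)$ are tangent all along $\gamma$, boundary unique continuation gives $D=\tau(D)$; otherwise they meet along a subarc of $\gamma$ at a positive dihedral angle, and the Meeks--Yau cut-and-paste performed at the boundary sphere --- rounding this boundary crease while keeping the boundary equal to $\gamma$ --- again strictly lowers area, contradicting minimality of $D$. Hence $D=\tau(D)$, so $D$ is $\mathbb{Z}_n$-equivariant.

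I expect the main obstacle to be the precise execution of step (d) and its dichotomy: one must show that whenever the exchanged surface fails to be smooth the crease can be rounded off with a definite area decrease (using the uniform convexity of $B$ to keep competitors inside $B$, and that the crease angle is bounded away from $0,\pi$ on a set of positive length), and conversely that in the borderline case with no such gain the two minimal sheets are genuinely tangent, so that the strong maximum principle and (boundary) unique continuation apply. The portion of this analysis carried out along the shared boundary curve $\gamma$ in part~(2) is the most delicate, and is precisely where the Meeks--Yau refinement of the cut-and-paste to the boundary sphere is needed.
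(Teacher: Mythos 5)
First, note that the paper does not prove this proposition at all: it is quoted verbatim as Theorem 4 of Meeks--Yau \cite{my}, so the only ``proof'' in the paper is the citation. Measured against the actual Meeks--Yau argument, your part (1) is the standard exchange-and-roundoff reasoning and is essentially correct: innermost intersection curve, swap subdisks, equality of areas forces the exchanged disk to be minimizing, a transverse crossing gives a crease that can be rounded to strictly decrease area, and a tangential touching is excluded by the strong maximum principle and unique continuation.

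Part (2), however, has a genuine gap in the boundary case. After the interior exchanges you are left with the possibility that $D$ and $\tau(D)$ are distinct least-area disks with $D\cap\tau(D)=\gamma$, i.e.\ with disjoint interiors. In that configuration there is no creased competitor to round off: the ``exchange across $\gamma$'' replaces $D$ wholesale by the smooth disk $\tau(D)$, and the creased object $D\cup\tau(D)$ is a sphere of twice the area, not an admissible spanning disk, so ``rounding the boundary crease while keeping the boundary equal to $\gamma$'' produces nothing. Indeed, if your step were valid it would prove that \emph{any} curve in $\partial B$ bounds a unique least-area disk (the argument at this stage never uses the group action beyond having two minimizers with common boundary), which is false in general; non-uniqueness with interior-disjoint minimizers is exactly the scenario the equivariance statement must confront. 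The missing ingredient is global and uses the symmetry: the orbit $D,\tau(D),\dots,\tau^{m-1}(D)$ consists of pairwise interior-disjoint disks with common boundary $\gamma$, hence they are linearly ordered (stacked) inside $B$; since $\tau$ is orientation preserving and acts freely on $\gamma$, it preserves each component of $\partial B\setminus\gamma$ and therefore preserves this stacking order, and an order-preserving permutation acting cyclically on a finite ordered set is the identity, forcing $\tau(D)=D$. Some argument of this topological type (this is how Meeks--Yau conclude) must replace your boundary-crease step; as written, the proof of (2) does not close.
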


Let us now prove Proposition \ref{minimizing}:
\begin{proof}
Since $\Sigma_j$  is minimizing among $G$ isotopies, the sequence $\Sigma_j$ converges to a $G$-stationary varifold $\mathcal{V}$. By Lemma \ref{gstat}, $\mathcal{V}$ is stationary.  It remains to prove the regularity.  In any small enough ball supported away from the axis $\mathcal{S}$, the varifold $\mathcal{V}$ has a smooth replacements by Meeks-Simon-Yau \cite{msy}.  Thus one can apply the theory of replacements in \cite{cd}, as well as the boundary regularity proved in \cite{dep} to obtain that:
\begin{enumerate}[label=\roman*)]
\item $\mathcal{V}$ is smooth and embedded in $B\setminus\mathcal{S}$, 
\item $\mathcal{V}$ has boundary in $\partial B$ given by $\{\gamma_i\}_{i=1}^k$,
\item $\mathcal{V}$ is contained in a convex closed set $A\subset B$ that touches $B$ only at the curves $\{\gamma_i\}_{i=1}^k$ in a transversal manner to $\partial B$
\item the total genus of $\mathcal{V}$ is $0$.
\end{enumerate}

The statement ii) follows since $\mathbb{Z}_n$ acts freely on the boundary curves, and thus since regularity is a local statement, one can work in a small neighborhood of any point $p\in\gamma_i$ to see that the tangent cone at such a point is a half disk directly as in Lemma 8.1 in \cite{dep}).  It remains to prove the regularity of $\mathcal{V}$ at the singular set $\mathcal{S}$.

First suppose $n\neq 2$.  Consider $\mathcal{S}'=\text{supp}(\mathcal{V})\cap\mathcal{S}$.  By iii), $\mathcal{S}'$ is a proper subset of $\mathcal{S}$.  Consider the top-most point $p\in\mathcal{S}'$ (which we can do since $\mathcal{S}'$ is a closed subset of a geodesic segment, identified, say, with the $z$-axis).  By iii), it follows that $p$ is not contained on the boundary of $B$.

We claim that any tangent cone $T_p\mathcal{V}$ to $\mathcal{V}$ at $p$ is the plane orthogonal to $\mathcal{S}$ at $p$, potentially with multiplicity.  To see this, consider any sequence of dilations $\mathcal{V}_j := \lambda_i\exp^{-1}_p(\mathcal{V}\cap B_1(p))\subset\mathbb{R}^3$ of the varifold $\mathcal{V}$ where $\lambda_i\rightarrow\infty$.  After a rotation, we may as well assume that the set $\mathcal{S}$ maps to the $z$-axis in $\mathbb{R}^3$. The surfaces $\mathcal{V}_j$ converge in $\mathbb{R}^3$ to a stationary cone $C$. Moreover, since $p$ was the top-most point of intersection of $\text{supp}(\mathcal{V})$ with the axis $\mathcal{S}$, it follows that the sequence $\mathcal{V}_j$ is disjoint from the positive $z$-axis $A^+$.  Let $A^-$ denote the negative $z$-axis.   Since the surfaces $\mathcal{V}_j$ have bounded genus and areas, classical results (\cite{CS2}) imply that $\mathcal{V}_j$ converge to $C$ smoothly away from finitely many points of $\mathbb{R}^3\setminus A^-$.  Thus it follows that $C$ cannot have any singular set away from the $z$ axis.  By  Lemma \ref{basic}, $C$ is a union of $k$ half-planes intersecting along the $z$-axis (potentially with different multiplicities).  If $k>2$, considering the convergence in a ball $R$ centered in $A^+$ but contained in the halfspace $\{z\geq 0\}$, one would have $\mathcal{V}_j\cap R$ a sequence of smooth minimal surfaces converging with bounded area and genus to a non-smooth surface, which is a contradiction again to \cite{CS2}.  It follows that $k=2$ and $C$ is a plane (potentially with multiplicity), which by Lemma \ref{actions} (as $n\neq 2$) must be orthogonal to $\mathcal{S}$.   By the Constancy Theorem, $C$ is an integer multiple of this plane. Since $\mathcal{V}$ is smooth, $G$-stable and thus stable by Proposition \ref{gstability} away from any cylinder about the singular set $\mathcal{S}$, it follows using the fact that the tangent cone at $x$ is an integer multiplicity plane orthogonal to $\mathcal{S}$ (see Step 4 in the proof of Proposition 6.3 in \cite{cd}) that the singularity at $p$ is removable.  This implies that there is an interval $I=[p,p']\subset\mathcal{S}$ so that $\mathcal{V}$ only intersects $I$ at $p$.  Thus we can iterate the above argument to obtain that $\mathcal{V}$ is smooth and intersects $\mathcal{S}$ orthogonally in a discrete set.  Note that when we iterate the argument, the points of intersection of $\mathcal{V}$ with $\mathcal{S}$ cannot accumulate at a limit point $p_\infty$.  Otherwise, at such a point $p_\infty$ the same analysis as above forces the tangent cone at $p_\infty$ to be a plane orthogonal to  $\mathcal{S}$.

It remains to prove regularity of $\mathcal{V}$ at the singular axis in the case $n=2$.  Because a tangent cone can contain the singular axis, the above argument is not sufficient.  In principle, if the minimizing sequence becomes tangent to the singular set, one can see by the $\mathbb{Z}_2$-equivarance that this should violate the maximum principle.  

We will prove that $\mathcal{V}$ has a smooth replacement in small equivariant balls centered about the singular axis.  By the replacement theory of \cite{pitts}, this implies that $\mathcal{V}$ is regular.  Note that Steps 1) and 2) in the following are very similar to the arguments of Almgren-Simon \cite{almgrensimon}, but we give full details so that one can see that the restriction to $\mathbb{Z}_2$ isotopies poses no problem.  In the following, let $\tau$ denote the generator of $\mathbb{Z}_2$.
\\ 
\\
\emph{Step 1: Reduction via neck-pinches}  

By Section 3 of Meeks-Simon-Yau \cite{msy}, we may perform finitely many $\gamma$-reductions (``neck-pinches") on the minimizing sequence $\Sigma_i$ (which we do not relabel) so that $\Sigma_i$ converges still to $\mathcal{V}$.  The reduced $\Sigma_i$ has the following key property: \emph{there exists $\epsilon>0$ so that for $i$ large enough, any closed Jordan curve $\alpha$ on $\Sigma_i$ of diameter at most $\epsilon$ bounds a disk in $\Sigma_i$}.

Note that in each $\gamma$-reduction in \cite{msy}, one is adding in two disks $D_1$, $D_2$ and removing a cylinder $C$.  The only additional consideration in the equivariant setting is the following: if $C$ is centered about $\mathcal{S}$ then only one neck-pinch is necessary.  If $C\cup D_1\cup D_2$ bounds a ball disjoint from $\mathcal{S}$, then one performs the neck-pinch twice (on $C\cup D_1\cup D_2$ and $\tau(C\cup D_1\cup D_2)$) in order to preserve $\mathbb{Z}_2$-equivariance.  This corresponds to the two types of permissible neck-pinches (ordinary and $\mathbb{Z}_k$, see Remark \ref{typeofneck}).
\\
\\
\noindent
\emph{Step 2: Reduction to disks}

Let $N$ be a uniformly convex $\mathbb{Z}_2$-ball centered around $x$ with radius at most $\epsilon$ from Step 1).  We also assume that $\Sigma_j$ intersect $\partial N$ transversally and that
\begin{equation}\label{ortho}
\mathcal{H}^2(\mathcal{V}\cap\partial N) = 0.
\end{equation}
 We claim that for $k$ large, we can perform an equivariant isotopy $\phi_t$ on $\Sigma_k$ so that $\phi_1(\Sigma_k)\cap N$ consists of a union of disks $D^k_1....D^k_j$ and 
\begin{equation}
\mathcal{H}^2(\phi_1(\Sigma_k))< \mathcal{H}^2(\Sigma_k).
\end{equation}

To produce the isotopy, first observe that because $N$ is uniformly convex, the following property holds:  if $F$ is a surface with boundary in $B\setminus N$ with boundary $\partial F$ contained in $\partial N$, and if $E$ is the smallest area set in $\partial N$ bounded by the curves $\partial F$, then 
\begin{equation}\label{uniformconvex}
\mathcal{H}^2(E) <  \mathcal{H}^2(F).
\end{equation}

To construct the purported isotopy, consider a component $F$ of the surface $\Sigma_k\cap (B\setminus N)$ that has boundary $\partial F$ entirely contained in $\partial N$.  Choose the component $\partial F^*$ of $\partial F$ so that the disk $D$ in $\Sigma_k$ bounded by $\partial F^*$ contains the other components of $\partial F$.   Note that by equivariance, either $F$ is interchanged with another component by the $\mathbb{Z}_2$ action, or else $\partial F^*$ is a circle centered around $\mathcal{S}$ (otherwise, it is easy to see that the connected component of $\Sigma_k$ containing $F$ is diffeomorphic to a two-sphere).  In the case that $F$ is interchanged, we repeat the procedure of the following paragraph for each of the two interchanged components, otherwise only once.

Replace the disk $D$ in $\Sigma_k$ by $E^*$ the disk in $\partial N$ that is bounded by $\partial F^*$, which decreases area by \eqref{uniformconvex}.  We can then press the disk $E^*$ inside $N$ changing the areas by an arbitrarily small amount.  In this way we can (equivariantly) reduce the number of components of $\Sigma_k\cap (B\setminus N)$ that have boundary $\partial F$ entirely contained in $\partial N$.  We can iterate this procedure until the only components of $\Sigma_k\cap (B\setminus N)$ have part of their boundaries among the boundary curves of $\Sigma$, $\{\gamma_i\}_{i=1}^k$. But such a component $C$ intersects $\partial N$ in circles, each of which bounds a disk in $N$ by Step 1).   Such a disk must be entirely contained in $N$ (otherwise, it would have been pushed into $N$ in the first stage of this process).
\\
\\
\emph{Step 3: Completion of argument}
\\
\indent
For each $k$ large, replace the disks $D^k_1,...D^k_j$ contained in $N$ by the area minimizers for their boundary curves in $\partial N$, to obtain new disks $\tilde{D}^k_1,...\tilde{D}^k_j$.  By Meeks-Simon-Yau \cite{msy}, such minimizers are smooth disks.  By Meeks-Yau (Proposition \ref{basic}.(2)), such disks are themselves equivariant.   It also follows from Meeks-Yau (Proposition \ref{basic}.(1)) that the minimizers are pairwise disjoint since their boundary curves are.  Note that each of the disks $\tilde{D}^k_j$ intersects $\mathcal{S}$ transversally, if it all, since otherwise, such a disk would have to contain the singular set $\mathcal{S}$ by Lemma \ref{intersect} in violation of the boundary regularity proved in \cite{almgrensimon} and \cite{dep}.  

By replacing the disks $D^k_1,...D^k_j$ in $\Sigma_k$ with $\tilde{D}^k_1,...\tilde{D}^k_j$ we obtain a new sequence $\tilde{\Sigma}_k$.  Note that since $\mathcal{H}^2(\tilde{\Sigma}_k) \leq \mathcal{H}^2(\Sigma_k)$, the sequence $\tilde{\Sigma}_k$ is still a minimizing sequence.   We claim that the limit $\mathcal{V}'$ of $\tilde{\Sigma}_k$ is a replacement for $\mathcal{V}$ in $N$.

We first show that $\mathcal{V}'$ coincides with $\mathcal{V}$ in $B\setminus N$ and thus $\mathcal{V}'$ satisfies (2) in the definition of replacement. If this fails, consider the components $C^k_i$ of $\Sigma_k\cap (B\setminus N)$ that were pushed into $N$ via Step 2.  Suppose the collection of $C^k_i$ contribute to the limit $\mathcal{V}$ in $B\setminus N$ for some subsequence of $k$'s (which we pass to without relabeling).  In this case, it follows that 
\begin{equation}\label{is}
\lim_{k \rightarrow\infty}\sum_i\mathcal{H}^2(C^k_i)\rightarrow \alpha>0. 
\end{equation}
\noindent
Thus (passing to a subsequence of $k$'s again) the area of the disks $E^k_i$ in $\partial N$ bounded by $C^k_i$ also satisfy 
\begin{equation}\label{mustbe}
\lim_{k\rightarrow\infty}\sum_i\mathcal{H}^2(E^k_i)\rightarrow \alpha.
\end{equation}
 Indeed, suppose \eqref{mustbe} failed.  Then since $\mathcal{H}^2(E^k_i)<\mathcal{H}^2(C^k_i)$ by \eqref{uniformconvex}, we obtain that
\begin{equation}\label{help}
\lim_{k\rightarrow\infty} \sum_i\mathcal{H}^2(E^k_i)=\beta <\alpha.
\end{equation}

By construction we have
\begin{equation}
\mathcal{H}^2(\tilde{\Sigma}_k)\leq \mathcal{H}^2(\Sigma_k)+\sum_i\mathcal{H}^2(E^k_i)- \sum_i\mathcal{H}^2(C^k_i),
\end{equation}
which together with \eqref{help} and \eqref{is} imply that
\begin{equation}
\lim\mathcal{H}^2(\tilde{\Sigma}_k)\leq \lim\mathcal{H}^2(\Sigma_k) +(\beta - \alpha).
\end{equation}

This contradicts the assumption that $\Sigma_k$ is a minimizing sequence.  Thus \eqref{mustbe} holds.  Since $\mathcal{H}^2(E^k_i)<\mathcal{H}^2(C^k_i)$ holds by construction, it follows from \eqref{mustbe} that for each $i$, $\mathcal{H}^2(C^k_i)\rightarrow\mathcal{H}^2(E^k_i)$ and also $C^k_i\rightarrow E^k_i$ in the flat topology (by the divergence theorem applied to a unit radial vector field defined outside of $N$.)
But then the varifold limit in $k$ of the surface $\cup_i C^k_i$ is supported on $\partial N$, contradicting \eqref{ortho}.  Thus we have shown that the limit of $\tilde{\Sigma}_k$ coincides with $\mathcal{V}$ in $B\setminus N$.  Since $\tilde{\Sigma}_k$ is still a minimizing sequence, its limit is moreover stationary. 

By Schoen's curvature estimate \cite{schoen}, the family of stable minimal disks $\tilde{D}^k_1,...\tilde{D}^k_j$ converges to a smooth and stable minimal surface (potentially with multiplicity).  Thus we have produced a smooth stable replacement for $\mathcal{V}$ in $N$. The replacement theory of \cite{pitts} (Proposition 6.3 in \cite{cd}) then implies that $\mathcal{V}$ is regular.

\end{proof}
This completes the proof of Theorem \ref{main}a,b. 
\indent
\section{Completion of Proof of Theorem \ref{main}}
Now we can complete the proof of e, f, and g in Theorem \ref{main}.   For e, it follows from Lemmas \ref{intersect} and \ref{intersect2} that the min-max limit (now proved smooth) must be orthogonal to part of the singular set with isotropy $\mathbb{Z}_n$ when $n\neq 2$.

For f, by Lemma \ref{intersect}, when $\Gamma$ is tangent to any part of the singular set with isotropy $\mathbb{Z}_2$ at a point $p$, it follows from Lemma \ref{intersect} that $\Gamma$ contains the singular set in a fixed neighborhood $N$ of $p$, which by iteration means $\Gamma$ contains the entire arc of constant isotropy containing $p$.  In this case, we claim the multiplicity of the component of $\Gamma$ containing $p$ is even.  By the replacement theory, $\Gamma$ is the limit in the interior of $N$ of $k$ stable graphs $f_1$, ... $f_k$ approaching $k\Gamma$ in $N$.  Note that by construction, none of these stable graphs contains the singular set $\mathcal{S}\cap N$.  Consider via the exponential map in $M$ that the functions $f_i$ are defined on $T_p\Gamma$, so that $\mathcal{S}\cap N$ is the $y$ axis in $T_p\Gamma$ and $\Gamma$ is a graph $G$ over the $xy$ plane containing $\mathcal{S}\cap N$ and satisfying $G(x,y) = -G(-x,y)$.  By equivariance, if $f_i$ is preserved by $\mathbb{Z}_2$, then $f_i(x,y)=-f_i(-x,y)$ implying $f_i(0,y)=0$, i.e., that $f_i$ contains the singular set $\mathcal{S}$ in $N$, which cannot occur.  Thus the action of $\mathbb{Z}_2$ interchanges $f_i$ with some other graph $f_{i'}$.  Since all graphs are interchanged, and since $\mathbb{Z}_2$ acts via diffeomorphism, it follows that $k$ is even.  This proves Theorem \ref{main}f and the analagous analysis implies Theorem \ref{main}g.
\subsection{Genus bounds: Proof of Theorem \ref{main}c,d}
In this section we indicate the necessary straightforward changes to the arguments of \cite{ketover} to prove that the min-max limit is achieved after equivariant surgeries.

For $\epsilon>0$ small enough, consider the tubular neighborhood $T_\epsilon(\Gamma)$.  By Theorem \ref{main}a,e,f,g. we obtain that the singular set $\tilde{\mathcal{S}}$ restricted to $T_\epsilon(\Gamma)$ consists of several arcs intersecting $\Gamma$ orthogonally with isotropy $\mathbb{Z}_n$, and may have some segments lying in the support of $\Gamma$ of isotropy $\mathbb{Z}_2$ that either close up or join each other at points of isotropy $\mathbb{D}_n$.

 It follows by the varifold convergence of $\Sigma_j$ to $\Gamma$ that there exists an $\epsilon>0$, so that $\Sigma_j$ intersects $\partial (T_\epsilon\Gamma)$ transversely in a union of small circles (see Proposition 2.3 in \cite{dep}).  Note that since the intersection points of $\Sigma_j$ with the singular locus are finite in number the circles comprising $\Sigma_j\cap\partial (T_\epsilon(\Gamma))$ are themselves disjoint from the singular locus for suitable $\epsilon$.  We cut along these circles to arrive at a new sequence $\tilde{\Sigma}_j$ that is now contained in a tubular neighborhood about $\Gamma$.  Some of these circles may be centered around $\tilde{\mathcal{S}}_1$, in which case we perform the surgeries $\mathbb{Z}_n$-equivariantly as $\mathbb{Z}_n$-neckpinches.  Other circles not centered around $\tilde{\mathcal{S}}_1$ will have $|G|$ copies and we do these ordinary neck-pinch surgeries isometrically for each copy.
In this way one produces a surgered min-max sequence (which we do not relabel) supported in a tubular neighborhood of the limiting minimal surface $\Gamma$.  It therefore suffices in the following to assume that $\Gamma$ is connected.  Suppose it occurs with multiplicity $n$.  For simplicity let us assume $\Gamma$ is orientable, as the non-orientable case follows with trivial modifications. 

In order to apply the Improved Lifting Lemma (\cite{smith}, \cite{ketover}) in the equivariant setting, one needs a good set of curves to consider and for this it is most useful to consider the projection of the min-max limit in the quotient orbifold.  To that end, consider $\pi(\Gamma)$ in $M/G$.  The surface $\pi(\Gamma)$ has some genus $g$ and potentially some piecewise smooth boundary curves $\{\alpha_i\}_{i=1}^j$ which consist of arcs of the singular set $\mathcal{S}$ of isotropy $\mathbb{Z}_2$.  The smooth pieces of the curves $\{\alpha_i\}_{i=1}^j$ join together at the $\mathbb{D}_n$ points.  

To justify the statement that $\pi(\Gamma)$ acquires boundary in this way when it contains a curve of isotropy $\mathbb{Z}_2$, it is enough to consider the local picture.  Thus consider $\mathbb{Z}_2$ acting on $\mathbb{R}^3$ by $180^o$ rotation $\tau$ about the $x$-axis.  The surfaces $\{z=0\}$ in the quotient $\mathbb{R}^3/\{\tau, e\}$ have boundary consisting of the $x$ axis.

Now let $\{\gamma_i\}_{i=1}^k$ be a collection of closed curves on $\pi(\Gamma)$ intersecting in one point such that $\pi(\Gamma)\setminus\cup_{i=1}^k\gamma_i$ is a topological disk $D$ with $j$ subdisks removed, each bounded by one of the closed curves $\alpha_i$.  We can assume that the curves $\{\gamma_i\}_{i=1}^k$ are disjoint from $\mathcal{S}$ since $\Gamma\cap\mathcal{S}$ consists of the curves $\{\alpha_i\}_{i=1}^j$ together with finitely many points $\mathcal{P}$.  As in Section 5 of \cite{ketover}, by cutting along the curves $\gamma_i$, we can identify $T_\epsilon(\Gamma)$ (in $\pi(M)$) with $G\times [-\epsilon,\epsilon]$, where $G$ is a regular polygon with several disks removed (those bounded by the curves $\alpha_i$).  We can add in several curves $\{\beta_i\}_{i=1}^l$ to $\pi(\Gamma)$ so that $G\setminus\{\beta_i\}$ consists of several disks $D_1$, ..., $D_r$, and each disk contains at most one point of $\mathcal{P}$.

The Improved lifting lemma (Proposition 2.2 in \cite{ketover}) implies that one can further surger the min-max sequence  so that it consists of the expected number of graphs in a neighborhood in $M$ of $\pi^{-1}(\cup_{i=1}^k\gamma_i)$.  This decomposition descends in $M/G$ to $n$ graphs along each closed curve $\gamma_i$. We may then apply the Improved lifting lemma to the closed curves $\pi^{-1}(\cup_{i=1}^j\alpha_i)$ as well as the segments $\pi^{-1}(\cup_{i=1}^l\beta_i)$.

Since each of the disks $D_i$ comprising $G$ has boundary among the curves $\alpha_i$, $\beta_i$ and  $\gamma_i$, 
it follows that the min-max sequence restricted to $D_i\times [-\epsilon,\epsilon]$ has boundary consisting of $n$ closed parallel curves in $\partial D_i\times [-\epsilon,\epsilon]$.  If $D_i$ contains no point of $\mathcal{P}$, by Lemma C.1 in \cite{dep} we can further surger the min-max sequence in $D_i\times [-\epsilon,\epsilon]$ so that it consists of $n$ parallel disks and all of these surgeries performed are ordinary neckpinches as they are supported away from the singular set by construction.  If however $D_i$ contains a point of $\mathcal{P}$ with isotropy $\mathbb{Z}_k$, we can similarly perform finitely many neck-pinches though some of these may be $\mathbb{Z}_k$-neckpinches.  

Since after these surgeries the min-max sequence in each disk $D_i$ consists of $n$ parallel disks, and the union of the disks comprise $G$, it follows that the min-max sequence has the desired decomposition in $G\times [-\epsilon,\epsilon]$ as $n$ parallel disks with boudaries parallel closed curves in  $\partial G\times [-\epsilon,\epsilon]$.  This implies the statement on degeneration (Theorem \ref{main}.c) and thus the genus bound Theorem \ref{main}.d.

This completes the proof of Theorem \ref{main}.

\section{Minimal surfaces in $\mathbb{S}^3$}
In this section, we will show that at least in $\mathbb{S}^3$, minimal surfaces with the same genus and symmetry group as many of the known examples can be constructed from Theorem \ref{main}.  Presumably our surfaces coincide with these examples but we cannot verify this.  

To the author's knowledge, the only known embedded minimal surfaces in $\mathbb{S}^3$ aside from great spheres and the Clifford tori are the following:
\begin{enumerate}
\item Desingularization of multiple great spheres (Lawson \cite{lawson} 1970)
\item Nine examples associated with tessalations of $\mathbb{S}^3$ by Platonic solids (Karcher-Pinkall-Sterling \cite{kps} 1988)
\item Doubling of Clifford torus (Kapouleas-Yang \cite{kapouleas} along square lattice 2003; Wiygul \cite{wigul1} along rectangular lattice 2013)
\item Desingularizing multiple Clifford tori along a geodesic (Choe-Soret \cite{choesoret} 2013)
\item Doubling of equator (Kapouleas \cite{kapouleas2} 2015)
\item ``Stacking" of multiple Clifford tori (Wiygul \cite{wigul1} 2015)
\end{enumerate}

We will first give a min-max interpretation of (1) and (2).  The surfaces (4) are constructed in the next section and are best understood as part of a much larger family of new minimal surfaces that we consider there.  The doublings (3) were considered in \cite{KMN}.  The surfaces (5) and (6) seem beyond are methods at present.  To double the equator, for instance, one needs to include in the symmetry group reflections through a great sphere.  Thus $M/G$ would have boundary, which we have explicitly excluded (see Remark \ref{whynoref}).  The ``stacking" of multiple Clifford tori due to Wiygul also seems difficult to construct using a min-max argument since one must consider at least $2$ parameter families, and ruling out multiplicity in this setting is challenging, even with the catenoid estimate.
\subsection{Computing the genus of equivariant minimal surfaces}

We recall the following Riemann-Hurwitz formula for branched covers.  Suppose $G$ acts on a three-manifold $M$.  Consider the projection map $\pi:M\rightarrow M/G$.  Suppose $\Sigma$ is an embedded surface in $M/G$ intersecting $\pi(\mathcal{S})$ transversally and consider the lifted surface $\tilde{\Sigma}=\pi^{-1}(\Sigma)$.   Then $f|_{\tilde{\Sigma}}:\tilde{\Sigma}\rightarrow\Sigma$ is a branched covering map. There are finitely many branch points $B$ in $\tilde{\Sigma}$.  For such a point $x\in\tilde{\Sigma}$, the ramification index is precisely $|G_{\pi(x)}|$, i.e. the order of the local isotropy group.  With this notation, the Riemann-Hurwitz formula is as follows:
\begin{equation}\label{riemann}
\chi(\tilde{\Sigma})=|G|\chi(\Sigma)-\sum_{x\in B} (|G_{\pi(x)}|-1).
\end{equation}

\subsection{Lawson surfaces}
We first identify $\mathbb{S}^3$ with a subset of $\mathbb{C}^2\equiv\mathbb{R}^4$:
$$\mathbb{S}^3=\{(z,w)\in\mathbb{C}^2\;|\;|z|^2+|w|^2=1\}.$$
We consider the group $G=\mathbb{Z}_{n+1}\times\mathbb{Z}_{m+1}$ acting on $\mathbb{S}^3$ as follows.   For any $(k,l)\in \mathbb{Z}_{n+1}\times\mathbb{Z}_{m+1}$ and $(z,w)\in\mathbb{S}^3$, we define the action to be $$(k,l).(z,w)=(e^{2k\pi i/(n+1)}z,e^{2l\pi i/(m+1)}w)$$   The singular set of $G$ acting on $\mathbb{S}^3$ consists of two linked circles: $$\mathcal{S}=\{z=0\}\cup\{w=0\}.$$ The isotropy group along the circle $\{z=0\}$ is $\mathbb{Z}_{n+1}$ and the isotropy group along the circle $\{w=0\}$ is $\mathbb{Z}_{m+1}$.  

Let us demonstrate that $M/G$ is topologically a $3$-sphere and that $M/G$ admits a sweepout by two-spheres where each sweepout surface $\Sigma/G$ intersects each of the two singular circles twice.   We may consider stereographic projection of $\mathbb{S}^3$ onto $\mathbb{R}^3$ based at the point $(z,w)=(0,-1)$.  In these coordinates the circle $C_1:=\{z=0\}$ maps to the $z$-axis of $\mathbb{R}^3$ and $C_2:=\{w=0\}$ maps to the unit circle in the $xy$-plane.  The great spheres in $\mathbb{S}^3$ containing the origin in $\mathbb{R}^3$ correspond to planes through the origin.

Assume $n\geq 2$ and $m\geq 2$.  Pick $Q_1,...,Q_{n+1}$ evenly spaced points along $C_2$ and $P_1, ... P_{m+1}$ along $C_1$.  The fundamental domain $\mathcal{F}$ for $G=\mathbb{Z}_{n+1}\times\mathbb{Z}_{m+1}$ acting on $\mathbb{S}^3$ is the convex hull of $P_1$, $P_2$, $Q_1$ and $Q_2$ (see for instance Figure 1 on page 347 in \cite{lawson}).  Since $M/G$ is formed from $\mathcal{F}$ by identifying opposite faces of the wedge we see that $M/G$ is an orbifold that is homeomorphic to a sphere.  

Let us sweep out $\mathcal{F}$ by genus $0$ surfaces $\{\Sigma_t\}_{t=0}^1$ as follows.  The surface $\Sigma_0$ consists of the planar region delimited by $P_1,P_2, Q_1$ together with the planar region delimited by $P_1,P_2, Q_2$.   Then for $t>0$, if $\{\gamma(t)\}_{t=0}^1$ parameterizes with constant speed the part of the curve $C_2$ between $Q_1$ and $Q_2$, set $\Sigma_t$ to be the union of the planes delimited by $P_1, P_2, \gamma(t)$ and $P_1, P_2, \gamma(1-t)$ restricted to $\mathcal{F}$.  Now desingularize $\Sigma_t$ along $C_1$ (the z-axis) to add in half of a fundamental domain of Scherk's singly periodic surface.  Then one can ``open the hole" of $\Sigma_t$ for $t$ near $1$ and $t$ near $0$ so that as $t\rightarrow 0$, $\Sigma_t$ approach the geodesics $P_1Q_1$, $P_2Q_1$ and $P_2Q_2$ and $P_1Q_2$.  Similarly, as $t\rightarrow 1$, we can open the hole so that $\Sigma_t$ converges to the geodesic segment joining the midpoint of $P_1P_2$ to the midpoint of $Q_1Q_2$ (see Figure 1 in \cite{lawson} for an illustration of most of these points, though note that his fundamental domain is smaller than ours).  The amended surfaces $\Sigma_t$ now intersect each singular circle $C_1$ and $C_2$ twice in $\mathcal{F}$.  Loosely speaking, given $2k$ planes intersecting at the same angle desingularized with a Scherk surface, there are two ways to press adjacent sheets together and then ``open up the holes," and our sweepout $\Sigma_t$ captures both of these directions.

The sweepout $\Sigma_t$ in $\mathcal{F}$ lifts to a $G$-sweepout of $\mathbb{S}^3$.  It remains to compute the genus of a lifted sweepout surface $\Sigma$ in $\mathbb{S}^3$.  One can see it directly but one can also apply the Riemann-Hurwitz formula \eqref{riemann} as follows.
 In each fundamental domain of $M$ acted on by $G$, $\Sigma$ is a two-sphere and intersects both the singular $n+1$ curve and the singular $m+1$ curve twice.  These lift to $2(n+1)$ points with ramification index $m+1$ and $2(m+1)$ points with ramification index equal to $n+1$.  Thus one obtains from \eqref{riemann}:
\begin{equation}
\chi(\tilde{\Sigma})=2(n+1)(m+1)-2(n+1)m-2(m+1)n=-2nm+2.
\end{equation}

Unraveling this we obtain $g(\tilde{\Sigma})=nm$, as expected.  
\\ \indent We now apply Theorem \ref{main} to our $G$-sweep-out by surfaces of genus $g=nm$, with $M=\mathbb{S}^3$ and $G=\mathbb{Z}_{n+1}\times\mathbb{Z}_{m+1}$ to obtain a minimal surface $\Gamma_{m,n}$.  It remains to compute the genus of $\Gamma_{m,n}$.   We know that $\Gamma_{m,n}$ must be connected because of the positivity of the Ricci curvature (it may however have multiplicity).  We claim that $\pi(\Gamma_{m,n})$ intersects each singular circle in $M/G$ in precisely two points.  By the calculation using the Riemann-Hurwitz formula above, this implies that the genus of $\Gamma_{m,n}$ is also $mn$, i.e., no degeneration occurred and we have produced a minimal surface with the same genus as the Lawson examples.  

Denote the singular circles in $M/G$ by $C_1$ and $C_2$, and we have that $\pi(\Sigma_j)$ intersects each of the circles twice.  To classify possible compressions, for any closed sphere $S$ in $M/G$ associate the ordered pair of non-negative integers $(a,b)$ where $a$ is the cardinality of $S\cap C_1$ and $b$ the cardinality of $S\cap C_2$.
We call the ordered pair the \emph{intersection type} of $S$.  We claim that if one starts with a surface with intersection type in the list $\mathcal{L}$ consisting of $(0,2)$, $(0,2)$, $(2,2)$ or $(0,0)$, then any surgery on such a surface gives components both of which have intersection type in $\mathcal{L}$.   

To check this, there are three types of neck-pinches: those centered around $C_1$, those centered around $C_2$ and those centered away from the singular locus.  Consider a neck-pinch of the first type, around $C_1$, and suppose one begins with a surface of type $(2,2)$ in  $\mathcal{L}$.  A neck-pinch about $C_1$ gives two components $S_1$  and $S_2$ and observe that $S_1$ and $S_2$ intersect $C_1$ a total of $4$ times (as the neck-pinch about $C_1$ adds two points of intersection in total).  Also, since the neckpinch adds a point of intersection to each component $S_1$ and $S_2$, we have that $\text{Card}(S_1\cap C_1)$ and $\text{Card}(S_2\cap C_1)$ are both at least $1$.  The mod $2$ intersection number of each $S_i$ with $C_1$ is $0$.  Thus $\text{Card}(S_2\cap C_1)$ and $\text{Card}(S_2\cap C_1)$ in fact must both be $2$.   Considering the intersections with the second circle $C_2$, we have either $\text{Card}(S_1\cap C_2)=2$ and $\text{Card}(S_2\cap C_2)=0$ or else $\text{Card}(S_1\cap C_2)=0$ and $\text{Card}(S_2\cap C_2)=2$ (again since the mod $2$ intersection number of the $S_i$ with $C_2$ is $0$).  In other words, the list $\mathcal{L}$ is preserved if one starts with a surface of type $(2,2)$ and does a $C_1$ or (mutatis mutandis) a $C_2$ surgery.  The third type of surgery supported away from the singular locus on $(2,2)$, gives either two spheres of type $(2,0)$ or $(0,2)$ or else  $(2,2)$  and $(0,0)$, again preserving the list $\mathcal{L}$.  Analagously one can show that starting at $(0,2)$, surgeries give components with intersection type still on the list $\mathcal{L}$. 

Since after surgeries, by Theorem \ref{main}c the components must be parallel, it follows that after surgeries on $\pi(\Sigma_j)$ one has $k$ parallel copies in $M/G$ of a surface of type $(2,2)$, $(0,2)$, $(2,0)$ or $(0,0)$.  But it follows from the Riemann-Hurwitz formula \eqref{riemann} that the surfaces $(0,2)$, $(2,0)$ or $(0,0)$ all lift to two spheres in $\mathbb{S}^3$.  None of these are equivariant when $\min(m,n)\geq 2$.  Thus in fact one has $k$ parallel copies of a surface of type $(2,2)$.  By the genus bounds (Theorem \ref{main}d) it follows that $k=1$.   This gives the Lawson surface.  Note that if  $m=1$ there is a danger that one obtains a great sphere with multiplicity $2$.  Using the ``catenoid estimate," one can likely rule this out.  See \cite{KMN}.

\begin{remark}\normalfont  One might wonder how we can produce Lawson's surfaces via min-max when Lawson himself used a minimization procedure in each fundamental domain.   Lawson actually first works in a symmetry group four times as big as ours, solves Plateau's problem for a polygon there, then reflects across an arc to fill in what our fundamental domain.  So each patch for him is the reflection of a stable minimal surface and so it is plausible that these have index 1 (half of a catenoid is stable but upon reflection it acquires index 1). \end{remark}

\begin{remark}\normalfont
Because our surfaces have less symmetry than those of Lawson, it is not clear that $\Gamma_{m,n}$ converge to a union of $m$ great spheres as $n\rightarrow\infty$.  Pitts-Rubinstein conjecture (see Section 1, Remark 6 in \cite{pr}) that this is true.  An interesting related question asked by Kapouleas (Queston 4.3 in \cite{kapdoub}) is whether one can produce desingularizations of equators that are not all meeting at an equal angle. It is a long-standing conjecture of Yau that $\mathbb{S}^3$ contains at most finitely many embedded minimal surfaces of a given genus (up to congruence) which would imply that the Lawson examples should not be able to ``flap their wings."
\end{remark}

\subsection{Tessellations by Platonic Solids: the Karcher-Pinkall-Sterling examples}
The nine examples of Karcher-Pinkall-Sterling \cite{kps} come from first considering tessellations of $\mathbb{S}^3$ by the Platonic solids.  For instance consider the binary icosahedral group $I^*$ with $120$ elements acting on $\mathbb{S}^3$.  The quotient $\mathbb{S}^3/I^*$  is the Poincar\'e dodecahedral space.  Acting on this space is the $60$ element group $I$ of orientation-preserving symmetries of a dodecahedron.   We set $M=\mathbb{S}^3/I^*$ and $G=I$.  We can construct a $G$-equivariant sweepout $\Sigma_t$ of $M$ by surfaces of genus $6$.   Let us construct these sweepouts explicity.  Recall there are $12$ faces of the dodecahedron.  Denote by $F_i$ the center of the $i$th face and $A_i, B_i, C_i,D_i, E_i$ the vertices on the $i$th face.  Let $C$ denote the center of the dodecahedron.   Let $\Sigma_0$ to be the graph $\mathcal{G}_1$ comprised of the arcs: $\cup_i CF_i$.  Then $\Sigma_t$ for $t$ small is defined to be the boundary of a tubular neighborhood of radius $t$ about $\mathcal{G}_1$.  One can extend this sweepout $I$-equivariantly until $t=1$ when it approaches in the Hausdorff topology the graph $\mathcal{G}_2$ consisting of the union in $i$ of the segments $A_iB_i$, $B_iC_i$, $C_iD_i$, $D_i,E_i$, $E_iA_i$ (i.e. the boundaries of the twelve faces of the dodecahedron).  We can then apply Theorem \ref{main} to obtain a smooth embedded $I$-invariant connected minimal surface $\Sigma$ in $M=\mathbb{S}^3/I^*$.   

It remains to compute the genus of $\Sigma$.   To do this, it helps to compute the isotropy groups.   A fundamental domain $\mathcal{F}$ for the $I$ action is the tetrahedron formed from the points $A_1$, $B_1$, $C$ and $F_1$.  It is topologically a three-sphere and the sweepout surfaces are two-spheres.  There is $\mathbb{Z}_2$ isotropy on the line connecting $F_1$ to the midpoint of the line connecting $A_1$ and $B_1$.  There are $30$ such points.   There is  $\mathbb{Z}_2$ isotropy on $A_1F_1$ and $B_1F_1$.  There are also $30$ such points.   There is also  $\mathbb{Z}_2$ isotropy on the line connecting $C$ to the midpoint of $A_1B_1$.  There are $30$ such points.  There is also $\mathbb{Z}_2$ isotropy along the line connecting $A_i$ and $B_i$, which is disjoint from the $\Sigma_t$. Finally there is $\mathbb{Z}_3$ isotropy along the line $A_1C$ and $B_1C$ of which there are $20$ such points (the number of vertices in the dodecahedron).  Thus the orbifold defect in the Riemann-Hurwitz formula \eqref{riemann} is $30+30+30+20(2)=130$, which is coherent as we obtain from \eqref{riemann} that $\chi(\Sigma_t)=60(2-2(0))-130 = -10$, as expected.

The singular locus $\mathcal{S}$ in fact is a tetrahedron with the four points in $\mathcal{S}_0$ as vertices, and $6$ edges (of type $\mathbb{Z}_2$, $\mathbb{Z}_2$,$\mathbb{Z}_2$,$\mathbb{Z}_3$, and $\mathbb{Z}_5$) as enumerated above.  The four vertex points in $\mathcal{S}_0$ consist of:  $F_1$ of type $(2,2,5)$, $C$ of type $(2,3,5)$, $A_i$ of type $(2,2,3)$, and the midpoint of $A_iB_i$ of type $(2,2,2)$.   

From Theorem \ref{main}.c we obtain that $\Sigma$ is achieved after compressions.  But we claim that any equivariant compression brings the genus down to $0$, or else pinches off spheres.  Thus $\Sigma$ would have to be a sphere with some multiplicity.  But non-trivial quotients of $\mathbb{S}^3$ admit no minimal spheres by Frankel's theorem.  Thus this cannot occur and the genus of $\Sigma$ is $6$.

Since $\pi(\Sigma_t)$ is a sphere, it bounds a three-ball on both sides of $M/G$, which is topologically a three-sphere. One can see this directly by visualizing the identifications that occur on the tetrahedron comprising the fundamental domain $\mathcal{F}$ for the group action. To classify the possible compressions it suffices to classify compressions into both such balls, $B_1$ and $B_2$.  The singular set $\mathcal{S}$ in $B_1$ is a figure ``H" where $\mathcal{S}\cap\partial B_1$ consists of the four boundary points of the $H$ (i.e, the ``H" is inscribed in the ball $B_1$). Again, one can see this by reading off how a piece of the sweepout surfaces restricted to $\mathcal{F}$ intersects the tetrahedron comprising $\mathcal{S}$ described above.

Recall that equivariant compressions come in two types, \emph{ordinary} or \emph{$\mathbb{Z}_k$-neckpinches}, where the latter are performed by cutting away an annulus centered around the singular set with isotropy $\mathbb{Z}_k$ and the former performed away from the singular set.   Thus the only possible $\mathbb{Z}_k$-neckpinches into $B_1$ divide the middle bar of the $H$ in half, or else is along the ``feet" or ``arms" of the $H$ and thus splits off a sphere in $M/G$ that intersects one of the singular arcs (either the ``feet" or ``arms"  of the $H$), twice.  This lifts to a collection of spheres.  Similarly, ordinary neckpinches can only pinch off spheres.

 Thus in fact no degeneration can occur and the genus of $\Sigma$ in $\mathbb{S}^3/I^*$ is $6$.  The minimal surface $\Sigma$ lifts to a surface $\Sigma'$ of genus 601 in $\mathbb{S}^3$ as can be seen from the formula $\chi(\Sigma')=120\chi(\Sigma)$.  

The eight other examples of Karcher-Pinkall-Sterling \cite{kps} arise analagously.

\begin{remark}\normalfont
As observed in \cite{pr} the above procedure also produces a genus $6$ surface in the Weber-Seifert hyperbolic space  (which arises by identifying the opposite faces in a dodecahedron with a different twist angle).   Instead of using Frankel's theorem to rule out degeneration to spheres, one uses the fact that manifolds with $\text{sec}=-1$ admit no minimal spheres.
\end{remark}

\begin{remark}\normalfont
The same argument gives a new construction of Schwarz genus $3$ surface in the standard flat three-torus.  Here one uses $O_{24}$ equivariant sweepouts and the collapse to a two-sphere is impossible because such a sphere would lift to many disjoint closed embedded minimal two-spheres in $\mathbb{R}^3$, an impossibility.
\end{remark}
 
\begin{remark}\normalfont
A related argument also gives several new examples of free boundary minimal surfaces in the unit $3$-ball $B$, provided one generalizes the equivariant theory to that setting.  Recall that a free boundary minimal surface is one intersecting $\partial B$ orthogonally.  Not many examples are known: the flat disk, the critical catenoid, surfaces of genus $0$ resembling two disks connected by many half-necks at the boundary discovered by Fraser-Schoen \cite{FS} (and produced by gluing by \cite{pacard}), and related examples of genus $1$ by \cite{pacard}.  To construct new examples, consider $O_{24}$ acting on $B$.  There's an obvious sweepout of $B$ by surfaces of genus $0$ starting at the graph consisting of the union of the $x$ $y$ and $z$ axes in $B$ and ending at the tessellation of $\partial B$ by $6$ squares.  The surfaces in the sweepout have genus $0$ and $6$ ends.  Running an equivariant min-max procedure, if any degeneration occurs one obtains a union of free boundary minimal disks.  But no such disk is $O_{24}$-equivariant.  It follows that one obtains a free boundary minimal surface of genus $0$ and $6$ ends resembling a three dimensional ``cross."  One can also obtain in this way genus $0$ free boundary minimal surfaces associated to the the group $T_{12}$ with $4$ ends, and the group $I_{60}$ with $12$ ends.  One can think of these examples as the free boundary analog to the surfaces of Karcher-Pinkall-Sterling \cite{kps}.  On a related note, recently Li and Zhou \cite{lz} have obtained the existence of infinitely many free boundary minimal surfaces contained in convex domains, though one does not know their topology.
\end{remark}

\subsection{New Minimal Surfaces in $\mathbb{S}^3$}
We give a construction of eight of the families of the minimal surfaces proposed by Pitts-Rubinstein \cite{pr}. The following examples are best visualized in terms of the Hopf fibration. Consider $\mathbb{S}^3\subset\mathbb{C}^2$ 
\begin{equation}
\mathbb{S}^3=\{(z,w)\in\mathbb{C}^2\;|\; |z|^2+|w|^2=1\}.
\end{equation}
Recall the Hopf map $H:\bS^3\rightarrow\mathbb{C}\cup\{\infty\}$ given by
\begin{equation}
H(z,w)= z/w.
\end{equation}
Identifying $\mathbb{C}\cup\{\infty\}$ with the Riemann sphere $\mathbb{S}^2$ via stereographic projection, we can think of $H$ as a map from $\mathbb{S}^3$ to $\mathbb{S}^2$.

Let $\mathcal{T}$ be a geodesic net in $\bS^2$.  A geodesic net is a set of geodesic segments so that the tangent vectors at intersection points sum to $0$.   Such a net is a stationary $1$-varifold in $\bS^2$. The lifted $\bS^1$-invariant surface $H^{-1}(\mathcal{T})$ is then also a stationary $2$-varifold in $\bS^3$. Points in $\bS^2$ lift via the Hopf map to closed geodesics in $\bS^3$ and the equators of $\mathbb{S}^2$ lift to Clifford tori in $\bS^3$. In the following examples, we will insert genus (i.e. Karcher-Scherk towers) along the geodesics given by $H^{-1}(\mathcal{P})$ where $\mathcal{P}$ are the non-smooth points of the net $\mathcal{T}$.   In this way we can desingularize and double surfaces of the form $H^{-1}(\mathcal{T})$ for many different geodesic nets $\cT$.

\subsection{Doublings of stationary varifolds}\label{doublingstat}
Let $\mathcal{T}_1$ be the spherical dodecahedral tessellation (and geodesic net) of $\bS^2$ consisting of twelve curved pentagons suitably pieced together.  Consider the stationary varifold $H^{-1}(\mathcal{T}_1)$ in $\bS^3$.  Our surfaces will consist of ``doublings" of this stationary configuration.  Denote by $P_i$ the 12 pentagons in $\mathcal{T}_1$.  Each $P_i$ lifts to a piecewise smooth torus in $\bS^3$.  

Consider sweepouts $\Gamma_t$ of $\bS^2$ that are equivariant for the dodecahedral group of rotations in $SO(3)$.  Let $\Gamma_0$ consist of the centers of each pentagon.  As $t$ incrases, let $\Gamma_t$ consist of twelve equivariant shapes in each pentagon which at time $t=1$ is equal to the $1$-skeleton of the tessellation (with multiplicity $2$).  Now lift this sweepout to $\bS^3$ via the Hopf map to obtain a sweepout of $\bS^3$: $\Sigma_t=H^{-1}(\Gamma_t)$.  The surfaces $\Sigma_t$ consist of $12$ disconnected tori but we can join them via a Karcher-Scherk tower (see for instance \cite{weber}) along the $20$ geodesics that are the lifts of the vertices $\cT_1$ to $\bS^3$.  Then one can ``open up the necks" as in the Lawson examples to collapse to a union of geodesic segments.  It is clear from the construction that one can obtain such a sweepout by surfaces with areas exceeding $2\mathcal{H}^2(H^{-1}(\mathcal{T}_1))$ by an arbitrarily small amount. (In fact by the catenoid estimate \cite{KMN} one can obtain more precise estimates for the areas in the sweepout, but such estimates are not necessary.  The reason is that in doubling a smooth surface, one always has to rule out obtaining the surface with multiplicity $2$, while in doubling a stationary varifold, there is no danger that one obtains the stationary varifold with multiplicity $2$ by the regularity for equivariant min-max limits proved in this paper).

We want to impose an additional cyclic symmetry as we move along the Hopf fibers.   Recall from the classification of Hopf \cite{hopf} that $I^*\times\mathbb{Z}_{n}$ acts freely on $\mathbb{S}^3$ when $n$ has no $2$, $3$ or $5$ in its prime factorization, ie. $\text{gcd}(n,30)=1$.   Here the action of $I^*$ on $\mathbb{S}^3$ is on the left and it permutes the $120$ dodecahedra making up $\mathbb{S}^3$.  The $\mathbb{Z}_n$ acts on the right by moving along the Hopf fibers.   Now consider the action $I^*\times\mathbb{Z}_{3n}$ for some $n$ with $\text{gcd}(n,30)=1$.  Consider the element $e\in I_{60}\subset SO(3)$ of order $3$ that fixes a vertex $v_e\in\mathcal{T}_1$ and rotates about it.  Let $e^*$ be the lift of $e$ to $I^*$.  Since $e^*$ acts freely on $\mathbb{S}^3$ and fixes the fiber $H^{-1}(v_e)$, restricting to this fiber it must act as a translation by $1/3$ of the length of the fiber.   Thus for the action $I^*\times\mathbb{Z}_{3n}$ on $\mathbb{S}^3$ there is $\mathbb{Z}_3$ isotropy along the $20$ closed geodesics comprising the lifts of the vertices of the pentagonal tessellation $\mathcal{T}_1$.  Applying Theorem \ref{main} we obtain a connected minimal surface $\Gamma_n$. If any of the genus were to collapse when $n$ is large, arguing as in the previous two examples, the surfaces become a disconnected union of tori or spheres.  Thus one would obtain a torus or sphere (potentially with multiplicity) but neither the Clifford tori nor great spheres are symmetric with respect to the given group.  Thus no degeneration can occur for large $n$.

We claim $\Gamma_n\rightarrow 2H^{-1}(\mathcal{T}_1)$ as varifolds as $n\rightarrow\infty$.  To see this, observe that $\lim_{n\rightarrow\infty}\Gamma_n$ is invariant under translation along the Hopf fiber.  Thus it is supported on the lift  $H^{-1}(\mathcal{T})$ for some stationary varifold $\mathcal{T}$ supported on $\mathbb{S}^2$.  Since each $\Gamma_n$ contains intersection points with the lifts of $\mathcal{T}_1$, it follows from the monotonicity formula that $\mathcal{T}$ contains these $20$ vertices.  Since all genus is being concentrated and collapsing along the lifts of the twenty vertices, it follows that $\mathcal{T}$ has no other non-smooth points aside from the twenty vertices of the dodecahedron.  Thus $\mathcal{T}$ consists of these $20$ points together with the geodesic segments joining them.  In other words $\mathcal{T}=\mathcal{T}_1$.  It remains to show that $\Gamma_n$ converges to $H^{-1}(\mathcal{T}_1)$ with multiplicity $2$.  It follows from the equivariance that the multiplicity is an even integer $k$.  Since as observed above, $\Gamma_n$ has area at most exceeding $2\mathcal{H}^2(H^{-1}(\mathcal{T}_1))$ by an arbitrarily small amount, it follows that $k=2$.

The same procedure can be carried out for other tessellations of $\mathbb{S}^2$.  Namely, let $\mathcal{T}_2$ be the icosahedral tessellation, $\mathcal{T}_3$ the octahedral tessellation (consisting of three pairwise orthogonal closed geodesics), and $\mathcal{T}_4$ the tetrahedral tessellation.    These four examples comprise Examples 4, 5, 7, and 9 in Table 1 from \cite{pr}.  

\subsection{Desingularizations of stationary varifolds}
Consider the unique truncated cube where the added triangular faces are chosen to meet all of their neighbors at a point.  It consists of $8$ triangles and $6$ squares.  Such a solid gives rise to a geodesic net $\cT_5$ in $\bS^2$.  It has octohedral symmetry.  Now consider a sweepout of $\bS^2$ where $\Gamma_0$ consists of the center of each square.  As $t$ increases, let $\Gamma_t$ be (disjoint) concentric circles of increasing radius.  The radius can increase and then as $t\rightarrow 1/2$, let the circles collapse into the one-skeleton of $\cT_5$.  For $t>1/2$ push $\Gamma_t$ equivariantly into the {\it triangles} and then let the radius decrease so that by $\Gamma_1$ the surfaces collapse to consist of the center of each triangle.  This gives an equivariant sweepout of $\bS^2$.   Then the lifted surfaces $\Sigma_t:=H^{-1}(\Gamma_t)$ give rise to an equivariant sweepout of $\bS^3$ where one connects the various disconnected pieces via Scherk like towers along the lifts of the vertices in $\cT_5$.  As in the previous examples, to ensure symmetry about the lifts of the vertices of the tessellation, we enforce $O^*\times \mathbb{Z}_{2m}$ symmetry, where $m$ has no factors in its prime decomposition of $2$ and $3$.  Applying Theorem \ref{main} one obtains a minimal surface $\Gamma_m$.  As in the previous example no degeneration is possible when $m$ is large and one can see that $\Gamma_m\rightarrow H^{-1}(\mathcal{T}_5)$ in the sense of varifolds as $m\rightarrow\infty$.

One can perform the same operation with the truncated dodecahedron and truncated tetrahedron giving Example 3 and Example 8 in the table in \cite{pr}.

\subsection{Choe-Soret examples: Desingularization of multiple Clifford tori}
This example is much like the previous one, only here the configuration desingularized is a smooth surface rather than a singular stationary varifold.  Consider $\cT_6$ consisting of $k$ evenly spread out longitudes meeting in $2$ antipodal points.  
The sphere is cut into $2k$ sectors.  This configuration has dihedral symmery $\mathbb{D}_{2k}$.  Let $\Gamma_0$ be the centers of the even sectors.  As $t$ increases let $\Gamma_t$ be equivariant closed curves in the even sectors approaching $\cT_6$, until $t=1/2$, where $\Gamma_t$ coincides with $\cT_6$.  Then for $t>1/2$ push the curves into the \emph{odd} sectors and let the closed curves shrink to points at $\Gamma_1$ in the odd sectors.   Again by adding in tubes equivariantly and applying Theorem \ref{main} one produces minimal surfaces $\Gamma_n$ converging to $H^{-1}(\mathcal{T}_6)$ with multiplicity $1$.


\subsection{Further discussion}
Given a closed Riemannian $3$-manifold $M$ and $\Lambda >0$, consider the moduli space: $$\mathcal{M}_\Lambda(M)=\{\Sigma\subset M \;|\; \Sigma \text{ an embedded minimal surface with } \mathcal{H}^2(\Sigma)\leq \Lambda\}.$$  An interesting problem is to describe the boundary of this moduli space - in other words, limits of sequences of surfaces in $\mathcal{M}_\Lambda(M)$ when the genus is unbounded (if the genus is bounded such limits are understood by classical results).  For instance, the Lawson surfaces $\xi_{1g}\subset \mathcal{M}_{8\pi}(\bS^3)$ converge as  $g\rightarrow\infty$ to a union of two great spheres intersecting orthogonally.  A sequence of surfaces in $\mathcal{M}_\Lambda(M)$ (after passing to a subsequence) must converge to a stationary varifold, but not much more is known about what limits can arise.   

The new minimal surfaces in $\bS^3$ we have constructed give interesting types of behavior that can occur in the limit.  In previous desingularizations, the limiting varifold seems to be a union of embeddings.  The doublings of stationary varifolds (Example \ref{doublingstat}) show that one cannot even hope the limit will be a union of minimal immersions.  Already in $\mathbb{R}^3$ by blowing down multi-ended Karcher-Scherk surfaces, one can produce a sequence of smooth minimal surfaces converging to a stationary varifold that is not a union of smooth immersions, but Example \ref{doublingstat} may be the first known such example in a closed $3$-manifold.   

The only general result about the kinds of stationary varifolds that can arise is due to Brian White \cite{white}.  He proved that given a sequence $\Sigma_i\rightarrow V$ of minimal surfaces with bounded area, the associated mod 2 flat chains $[\Sigma_i]\rightarrow [V]$ also converge.  In the case of doublings above, the limit $V$ is a stationary varifold with multiplicity $2$ so that $[V]$ is $0$.  Indeed, the doubling surfaces converge in the flat topology to $0$ (they bound a region of arbitrarily small volume).  Thus while ``triple junctions" with multiplicity 1 cannot arise as a limit of orientable surfaces, they can arise with even multiplicity.

\bigskip
\end{document}